\DeclareFontFamily{OT1}{pzc}{}
  \DeclareFontShape{OT1}{pzc}{m}{it}{<-> s * [1.200] pzcmi7t}{}
  \DeclareMathAlphabet{\mathpzc}{OT1}{pzc}{m}{it}
\newcommand{\Z}{\mathbb{Z}}
\newcommand{\Q}{\mathbb{Q}}
\newcommand{\C}{\mathbb{C}}
\newcommand{\F}{\mathbb{F}}
\renewcommand{\L}{\mathcal{L}}
\renewcommand{\P}{\mathbb{P}}
\newcommand{\M}{\mathcal{M}}
\newcommand{\G}{\mathbb{G}}
\newcommand{\X}{\mathcal{X}}
\newcommand{\Y}{\mathcal{Y}}
\newcommand{\Zz}{\mathcal{Z}}
\DeclareFontFamily{U}{wncy}{}
    \DeclareFontShape{U}{wncy}{m}{n}{<->wncyr10}{}
    \DeclareSymbolFont{mcy}{U}{wncy}{m}{n}
    \DeclareMathSymbol{\Sha}{\mathord}{mcy}{"58} 
\renewcommand{\char}{\ensuremath{\operatorname{char}}}
\newcommand{\Spec}{\ensuremath{\operatorname{Spec}}}
\newcommand{\Aut}{\ensuremath{\operatorname{Aut}}}
\newcommand{\la}{\langle}
\newcommand{\ra}{\rangle}
\newcommand{\orb}{\mathcal{O}}
\newcommand*\cat[1]{{\tt #1}}
\newcommand{\Hom}{\ensuremath{\operatorname{Hom}}}
\newcommand{\Gal}{\ensuremath{\operatorname{Gal}}}
\newcommand{\Sym}{\ensuremath{\operatorname{Sym}}}
\newcommand{\invlim}{\displaystyle\lim_{\longleftarrow}}
\newcommand{\A}{\mathbb{A}}
\renewcommand{\div}{\ensuremath{\operatorname{div}}}
\newcommand{\et}{\ensuremath{\operatorname{\acute{e}t}}}
\newcommand{\Proj}{\ensuremath{\operatorname{Proj}}}
\newcommand{\W}{\mathbb{W}}
\newcommand{\DIV}{\frak{Div}}
\newcommand{\rig}{\ensuremath{\operatorname{rig}}}
\newcommand{\U}{\mathcal{U}}
\renewcommand{\epsilon}{\varepsilon}
\newtheorem{thm}{Theorem}[section]
\newtheorem{prop}[thm]{Proposition}
\newtheorem{lem}[thm]{Lemma}
\newtheorem{defn}[thm]{Definition}
\newtheorem{cor}[thm]{Corollary}
\newtheorem{rem}[thm]{Remark}
  \let\oldrem\rem
  \renewcommand{\rem}{\oldrem\normalfont}
\newtheorem{question}[thm]{Question}
\newtheorem{ex}[thm]{Example}
  \let\oldex\ex
  \renewcommand{\ex}{\oldex\normalfont}
\let\@fnsymbol\@arabic
\begin{document}




\title{Artin--Schreier--Witt Theory for Stacky Curves}
\author{Andrew Kobin\thanks{The author is partially supported by the American Mathematical Society and the Simons Foundation.}}

\maketitle


\begin{abstract}

We extend our previous classification of stacky curves in positive characteristic using higher ramification data and Artin--Schreier--Witt theory. The main new technical tool introduced is the Artin--Schreier--Witt root stack, a generalization of root stacks to the wildly ramified setting. We then apply our wild Riemann--Hurwitz theorem for stacks to compute the canonical rings of some wild stacky curves. 

\end{abstract}


\section{Introduction}

\vspace{0.1in}
Classical algebraic geometry in characteristic $p > 0$ already presents a wealth of new phenomena that do not arise in characteristic $0$. Consider for instance the topology of the complex plane, viewed as the affine curve $\A_{\C}^{1}$. Since $\A_{\C}^{1}$ is simply connected, it has no nontrivial coverings; it is not until one removes points from $\A_{\C}^{1}$ that more interesting topology begins to appear. In contrast, for an algebraically closed field $k$ of characteristic $p > 0$, the affine line $\A_{k}^{1}$ is far from being simply connected: Abhyankar's conjecture (a theorem of Harbater \cite{harb-abhyankar} and Raynaud \cite{ray}) describes the finite quotients of the \'{e}tale fundamental group $\pi_{1}(\A_{k}^{1})$, but this profinite group is not even prosolvable. 

The key observation in studying these phenomena is that \'{e}tale covers of $\A_{k}^{1}$ correspond to covers of $\P_{k}^{1}$ which are ramified over the point at infinity. In characteristic $p > 0$, ramified covers of curves (or equivalently, function field extensions) can be studied using various {\it ramification filtrations} of their Galois groups. For example, by Artin--Schreier theory, $\Z/p\Z$-extensions of a perfect field $K$ of characteristic $p$ are all of the form 
$$
L = K[x]/(x^{p} - x - a) \quad\text{for some } a\in K,a\not = b^{p} - b \text{ for any } b\in K. 
$$
If $K$ is a discretely valued field with valuation $v$, the integer $m = -v(a)$ coincides with the {\it jump} in the ramification filtration of $\Gal(L/K)$. This jump is an isomorphism invariant of the extension and (after completion) essentially classifies degree $p$ extensions. This situation can be understood geometrically as follows. When $K$ is a function field corresponding to a curve $C$, then $\Z/p\Z$-extensions $L/K$ are equivalent to $\Z/p\Z$-covers $D\rightarrow C$, up to birational equivalence, and each of these covers can be obtained by pulling back the Artin--Schreier isogeny $\wp : \G_{a}\rightarrow\G_{a},x\mapsto x^{p} - x$ along a map $C\rightarrow\G_{a}$. 

A geometric description of the ramification jump $m$ requires more work. Assume $D\rightarrow C$ has a single branch point $P\in C$ and consider instead a map $h : C\rightarrow\P^{1}$, where $\P^{1}$ is viewed as the one-point compactification of $\G_{a}$ and $P$ maps to the distinguished point $\infty$ in $\P^{1}$. The Artin--Schreier isogeny on $\G_{a}$ extends to a degree $p$ map $\Psi_{1} : \P^{1}\rightarrow\P^{1}$ and one shows that the cover $D\rightarrow C$ may be obtained by pulling back $\Psi_{1}$ along $h$. It also follows that $m$ is precisely the order of vanishing of $h$ at $P$. 

Artin--Schreier--Witt theory generalizes Artin--Schreier theory to the case of $\Z/p^{n}\Z$-extensions of $K$ for $n\geq 2$. Namely, these are all of the form 
$$
L = K[\underline{x}]/(\underline{x}^{p} - \underline{x} - \underline{a}) \quad\text{for some } \underline{a}\in\W_{n}(K),\underline{a}\not = \underline{b}^{p} - \underline{b} \text{ for any } \underline{b}\in\W_{n}(K). 
$$
Here, $\W_{n}(K)$ is the ring of length $n$ $p$-typical Witt vectors over $K$ and $\underline{x} = (x_{0},x_{1},\ldots,x_{n - 1})$ is a Witt vector of indeterminates. When $K$ is a function field, extensions $L/K$ are obtained by pulling back the Artin--Schreier--Witt isogeny 
$$
\wp : \W_{n} \longrightarrow \W_{n}, \quad \underline{x} \longmapsto \underline{x}^{p} - \underline{x}
$$
along a map $C\rightarrow\W_{n}$, where $C$ is a curve with function field $K$. 

To study the ramification invariants geometrically, Garuti \cite{gar} introduced a compactification $\overline{\W}_{n}$ of the ring $\W_{n}$ which plays the same role for cyclic $p^{n}$-covers as $\P^{1}$ played for $p$-covers in the above paragraph. Concretely, $\wp$ extends to a degree $p^{n}$ map $\Psi_{n} : \overline{\W}_{n}\rightarrow\overline{\W}_{n}$ and $\Z/p^{n}\Z$-covers of curves $D\rightarrow C$ can be obtained by pulling back $\Psi_{n}$ along a map $h : C\rightarrow\overline{\W}_{n}$. Then, the $n$ different jumps in the ramification filtration of $\Gal(D/C)\cong\Z/p^{n}\Z$ coincide with the orders of vanishing of $h$ along the pullbacks of various divisors in $\overline{\W}_{n}$ \cite[Thm.~1]{gar}.

\subsection{Stacks in Characteristic $p$}
\label{sec:introstackscharp}

In \cite{kob}, the author introduced a construction called an {\it Artin--Schreier root stack} in order to study $\Z/p\Z$-covers of curves using stacks and to classify stacky curves with wild ramification of order $p$. Briefly, if $D\rightarrow C$ is a cover of curves branched at $P\in C$ such that the inertia group at $P$ is $I\cong \Z/p\Z$ (as algebraic groups), let $m$ be the ramification jump of the ramification filtration of $I$. Then \'{e}tale-locally, the corresponding map $h : C\rightarrow\P^{1}$ taking $P$ to $\infty$ factors through the weighted projective line $\P(1,m)$, which admits a degree $p$ map $\P(1,m)\rightarrow\P(1,m)$. This map descends to the quotient stack, $\wp_{m} : [\P(1,m)/\G_{a}]\rightarrow [\P(1,m)/\G_{a}]$, and pulling back $\wp_{m}$ along $h$ defines the {\it Artin--Schreier root stack} of $C$, denoted $\wp_{m}^{-1}((L,s,f)/C)$. This definition is made global in \cite[Def.~6.9]{kob}. 

One of the main applications of this construction, \cite[Thm.~6.16]{kob}, shows that every such cover of curves $D\rightarrow C$ factors \'{e}tale-locally through an Artin--Schreier root stack which is a wild stacky curve. Another, \cite[Thm.~6.18]{kob}, classifies wild stacky curves with this type of inertia. 

When the cover of curves (or instead, the wild stacky curve) has inertia of order $p^{n}$ for some $n\geq 2$, it is always possible to iterate the Artin--Schreier root stack construction to obtain the desired stacky structure \cite[Lem.~6.11]{kob}. However, the local equations/geometric data quickly becomes messy (as with ordinary curves). In the cyclic case, we would like to directly generalize the construction in \cite{kob}, rather than having to take towers of Artin--Schreier roots. This leads us to Garuti's geometric version of Artin--Schreier--Witt theory described in the introduction. 

In Section~\ref{sec:AWSrootstacks}, we introduce a stacky version $\overline{\W}_{n}(\overline{m})$ of Garuti's compactification which then allows us to define the {\it Artin--Schreier--Witt root stack} of a scheme $X$ along a map $X\rightarrow [\W_{n}(\overline{m})/\W_{n}]$. Here, $\overline{m} = (m_{1},\ldots,m_{n})$ is a sequence of positive integers related to the ramification jumps of the ramified covers of $X$ one wants to allow through this stacky structure. As a functor, $\overline{\W}_{n}(\overline{m})$ generalizes the $n = 1$ case $\overline{\W}_{1}(m) = \P(1,m)$, the weighted projective stack whose functor of points is described from this perspective in \cite[Prop.~6.4]{kob}. For $n\geq 2$, a map $X\rightarrow\overline{\W}_{n}(\overline{m})$ is determined by a tuple $(L,s,f_{1},\ldots,f_{n})$, where $L$ is a line bundle on $X$, $s$ is a section of $L$ and $f_{i}$ is a section of $L^{\otimes m_{i}}$; see Proposition~\ref{prop:ASWlinebundlesectionclassification}. The resulting root stack is denoted $\Psi_{\overline{m}}^{-1}((L,s,f_{1},\ldots,f_{n})/X)$. 

The simple reason for keeping track of all this extra data is that wildly ramified structures (covers of curves, stacks, etc.) are more diverse than tame structures and require more invariants to classify. This is already evident in the $n = 1$ case \cite[Rem.~6.19]{kob} and will play a role in the classification results of the present article, summarized in the following two theorems. 

\begin{thm}[{Theorem~\ref{thm:factorthroughASW}}]
Suppose $Y\rightarrow X$ is a finite separable Galois cover of curves over an algebraically closed field of characteristic $p > 0$, with a ramification point $y\in Y$ over $x\in X$ having inertia group $I(y\mid x)\cong\Z/p^{n}\Z$. Then \'{e}tale-locally, $\varphi$ factors through an Artin--Schreier--Witt root stack $\Psi_{\overline{m}}^{-1}((L,s,f_{1},\ldots,f_{n})/X)$. 
\end{thm}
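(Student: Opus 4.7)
The plan is to adapt the $n=1$ argument of \cite[Thm.~6.16]{kob} to the Witt-vector setting, using Garuti's compactification $\overline{\W}_n$ in place of $\P^1$ and its stacky refinement $\overline{\W}_n(\overline{m})$ from Section~\ref{sec:AWSrootstacks} in place of $\P(1,m)$.

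First I would reduce to a local statement. Since the conclusion is étale-local on $X$, I may pass to an étale neighborhood of $x$ so that the preimage of $x$ is the single point $y$ and, restricting to the component of $Y$ through $y$, the cover becomes $\Z/p^n\Z$-Galois. By Artin--Schreier--Witt theory this local extension is classified by a Witt vector $\underline{a}\in\W_n(\mathcal{O}_{X,x})$, and the cover $Y\to X$ is the pullback of the isogeny $\wp:\W_n\to\W_n$ along the map $h_0:X\to\W_n$ corresponding to $\underline{a}$. Garuti's theorem \cite[Thm.~1]{gar} then extends $h_0$ (after a suitable birational modification at $x$) to a morphism $h:X\to\overline{\W}_n$ and identifies the ramification jumps $m_1,\ldots,m_n$ of $\Gal(Y/X)$ at $y$ with the orders of vanishing of $h$ along a canonical sequence of divisors in $\overline{\W}_n$. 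Set $\overline{m}=(m_1,\ldots,m_n)$.

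The core step is to lift $h$ through the coarse moduli map $\overline{\W}_n(\overline{m})\to\overline{\W}_n$ to a morphism $\tilde{h}:X\to\overline{\W}_n(\overline{m})$. By Proposition~\ref{prop:ASWlinebundlesectionclassification}, such a lift is equivalent to the data of a tuple $(L,s,f_1,\ldots,f_n)$ where $L$ is a line bundle on $X$, $s\in H^{0}(X,L)$ cuts out the branch locus, and $f_i\in H^{0}(X,L^{\otimes m_i})$. I would take $L=\mathcal{O}_X(x)$ with its tautological section $s$ vanishing at $x$, and realize the components of $\underline{a}$ as the sections $f_i$. Once $\tilde{h}$ is constructed, the pullback of the stacky isogeny $\Psi_{\overline{m}}$ along $\tilde{h}$ is, by definition, the Artin--Schreier--Witt root stack $\Psi_{\overline{m}}^{-1}((L,s,f_1,\ldots,f_n)/X)$, and the cover $Y\to X$ factors étale-locally through this stack because the two constructions agree on the open complement of $x$ and both are finite flat of degree $p^n$ with matching ramification data.

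The main obstacle is producing the $f_i$ with the correct pole orders. To have $f_i\in H^{0}(X,L^{\otimes m_i})$ one needs the $i$-th component of $\underline{a}$ to have pole order exactly $m_i$ at $x$, which amounts to choosing a ``minimal'' Artin--Schreier--Witt representative of the class of $\underline{a}$. For $n=1$ this is the classical reduction $a\mapsto a-(b^{p}-b)$, but for $n\geq 2$ the carry polynomials in Witt-vector arithmetic couple adjacent components, so naively reducing the pole order of $a_i$ can inflate that of $a_{i+1}$. I would handle this by induction up the Witt tower: normalize $a_0$ first (the $n=1$ case), then normalize $a_1$ modulo the constraint imposed by the already-fixed jump $m_1$, and so on. Strict henselization makes each individual normalization step routine, and the fact that the $m_i$ are intrinsic isomorphism invariants of the extension guarantees that the process terminates with a representative whose pole orders match $\overline{m}$ exactly, as required.
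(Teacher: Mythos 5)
You follow the same basic strategy as the paper's proof: reduce \'{e}tale-locally to a one-point $\Z/p^{n}\Z$-cover cut out by an Artin--Schreier--Witt equation, encode the data as a tuple $(L,s,f_{1},\ldots,f_{n})$ via Proposition~\ref{prop:ASWlinebundlesectionclassification}, and extract the factorization from the definition of $\Psi_{\overline{m}}^{-1}((L,s,f_{1},\ldots,f_{n})/U)$ as a (normalized) pullback of $\Psi$. The differences are in the middle steps: the paper produces the one-point cover over an \'{e}tale neighborhood $U$ by invoking generalized Jacobians (\cite[Prop.~VI.11.9]{ser1}), realizing the cover as a pullback of a cyclic $p^{n}$-isogeny $J'\rightarrow J_{\frak{m}}$, then sets $m_{i}:=v_{x}(w_{i})$ for its chosen Witt representative and takes the $f_{i}$ from the germ data of Example~\ref{ex:elemASW}; you instead localize directly, extend the classifying map $h_{0}$ to $\overline{\W}_{n}$ via Garuti's compactification, and take the $f_{i}$ to be (normalized) Witt-vector components, which has the merit of making the lifting criterion of Proposition~\ref{prop:ASWlinebundlesectionclassification} --- that $s$ and each $f_{i}$ not vanish simultaneously at $x$ --- explicit. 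Both routes are viable. (Minor slip: the classifying Witt vector lies in $\W_{n}(k(X))$ with poles at $x$, not in $\W_{n}(\orb_{X,x})$.)

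The one step you should repair is the normalization. You assert that, because the $m_{i}$ are isomorphism invariants, the inductive reduction terminates with a representative whose component pole orders equal the jumps $\overline{m}$; as stated this conflates two different invariants. By Theorem~\ref{thm:garjump} (applied to the quotient subextensions), the upper jumps are obtained from the component pole orders by maxima of the form $\max\{p^{\,n-i}m_{i}\}$, so a reduced representative generally does \emph{not} have pole orders equal to the jumps --- in Example~\ref{ex:key} the vector is $(t^{-j},0)$, with second component identically zero, while the jumps are $j$ and $pj$. What your argument actually needs is weaker and attainable: a representative each of whose components has a pole at $x$ of exact order $m_{i}\geq 1$ forming an admissible sequence, so that each $f_{i}\in H^{0}(U,L^{m_{i}})$ is nonzero at $x$. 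Working inductively in $i$, you can adjust slot $i$ by $\wp$ applied to Witt vectors supported in slot $i$ (this changes that component by $b^{p}-b$ and leaves earlier slots untouched), either reducing its pole order to be prime to $p$ or inflating a zero/low-order component as needed; the theorem asks only for \emph{some} admissible $\overline{m}$, which is exactly how the paper proceeds. So the issue is fixable bookkeeping rather than a wrong approach, but ``intrinsic invariance of the jumps'' alone does not deliver it, and your final sentence should be replaced by this argument (or by citing the reduced-representative results of \cite{op,tho}).
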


\begin{thm}[{Theorem~\ref{thm:wildstacky}}]
\label{thm:wildstackyintro}
Let $\X$ be a stacky curve over a perfect field of characteristic $p > 0$. Then for any stacky point $x$ with cyclic automorphism group of order $p^{n}$, there is an open substack $\mathcal{Z}\subseteq\X$ containing $x$ which is isomorphic to $\Psi_{\overline{m}}^{-1}((L,s,f_{1},\ldots,f_{n})/Z)$ for some $\overline{m}$ and $L,s,f_{1},\ldots,f_{n}$ on an open subscheme $Z$ of the coarse space of $\X$. 
\end{thm}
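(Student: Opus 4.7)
My plan is to work étale-locally on $\X$ around the stacky point $x$ and reduce the statement to the classification of cyclic $p^n$-covers of curves already handled by Theorem~\ref{thm:factorthroughASW}. Let $X$ denote the coarse moduli space of $\X$; passing to a sufficiently small étale neighborhood $Z\to X$ of the image of $x$, the pulled-back stack $\mathcal{Z}:=\X\times_X Z$ admits a presentation as a quotient $[V/(\Z/p^n\Z)]$, where $V\to Z$ is a cyclic Galois cover of smooth curves totally ramified at the point of $V$ above $x$ with inertia $\Z/p^n\Z$ and étale elsewhere. This is the standard local structure for stacky curves with cyclic residual gerbe: the Galois cover $V\to Z$ is extracted from the $\Z/p^n\Z$-torsor underlying the stacky structure at $x$.

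Now the cover $V\to Z$ exhibits exactly the sort of wild ramification to which Theorem~\ref{thm:factorthroughASW} applies, so after possibly shrinking $Z$ further there is a factorization
\[
V \longrightarrow \Psi_{\overline{m}}^{-1}((L,s,f_1,\ldots,f_n)/Z) \longrightarrow Z
\]
through an Artin--Schreier--Witt root stack. The tuple $\overline{m}$ records the jumps in the ramification filtration of $\Z/p^n\Z$ at the distinguished point of $V$, while the sections $f_1,\ldots,f_n$ are produced from an Artin--Schreier--Witt representative $\underline{a}\in\W_n(K(Z))$ of the cover. Moreover, this factorization is $\Z/p^n\Z$-equivariant for the natural Galois action on the root stack coming from pulling back the degree $p^n$ isogeny $\Psi_n : \overline{\W}_n\to\overline{\W}_n$.

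Taking the quotient by $\Z/p^n\Z$ of this equivariant map then yields a morphism of stacks
\[
\mathcal{Z} = [V/(\Z/p^n\Z)] \longrightarrow \Psi_{\overline{m}}^{-1}((L,s,f_1,\ldots,f_n)/Z)
\]
over $Z$, which is the candidate isomorphism. Both sides are stacky curves with coarse space $Z$ (after shrinking so that the only stacky point is the one over $x$), and the morphism is visibly an isomorphism over $Z\setminus\{x\}$. The main obstacle will be checking that it is also an isomorphism at $x$; equivalently, that it is étale there. For this I would use the modular description of $\overline{\W}_n(\overline{m})$ from Proposition~\ref{prop:ASWlinebundlesectionclassification} to match up the residual gerbes and the higher ramification data encoded by $(L,s,f_1,\ldots,f_n)$ on both sides, invoking the stacky analogue of Garuti's computation that reads off the ramification jumps from the orders of vanishing of the $f_i$. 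Unlike the tame or $n=1$ cases, here one genuinely needs all $n$ sections $f_1,\ldots,f_n$ to distinguish stacky structures with the same coarse space and the same abstract inertia group, which is why the full Artin--Schreier--Witt root stack construction (rather than an iterated tower of Artin--Schreier root stacks) is essential for a clean isomorphism statement.
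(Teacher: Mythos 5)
Your route is essentially the paper's: present $\X$ near $x$ as a quotient $[V/(\Z/p^{n}\Z)]$ of a one-point cyclic cover $V$ of a neighborhood of the image of $x$, apply Theorem~\ref{thm:factorthroughASW} to that cover, and identify the quotient with the Artin--Schreier--Witt root stack attached to the Witt-vector equation of the cover. Two points, however, keep your write-up from delivering the statement as formulated. First, you base-change along an \emph{\'etale} neighborhood $Z\rightarrow X$, but the theorem asserts the existence of an \emph{open} substack $\mathcal{Z}\subseteq\X$ over a Zariski open subscheme $Z$ of the coarse space; if $Z$ is only \'etale over $X$, then $\X\times_{X}Z$ is not an open substack of $\X$, so you have proved an \'etale-local variant rather than the stated result. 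The paper avoids this by fixing a global \'etale presentation $Y\rightarrow\X$, choosing $U\subseteq X$ Zariski open meeting the image of the stacky locus only in $x$, and working with $V=U\times_{X}Y\rightarrow U$; the tuple $(L,s,f_{1},\ldots,f_{n})$, with $L=\orb_{U}(x)$, its canonical section $s$, and the $f_{i}$ extracted from the Artin--Schreier--Witt equation of the cover, then lives on the Zariski open $U$, which is what yields an honest open substack $\U=U\times_{X}\X$. Your argument should be rearranged the same way (Theorem~\ref{thm:factorthroughASW} itself allows this, since in its proof the locus where the map to the generalized Jacobian is defined can be taken Zariski open).

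Second, the step you label ``the main obstacle'' --- that the descended map $[V/(\Z/p^{n}\Z)]\rightarrow\Psi^{-1}((L,s,f_{1},\ldots,f_{n})/Z)$ is an isomorphism at $x$ --- is left as a plan (matching residual gerbes and ramification jumps), and matching those invariants alone would not suffice in the wild setting; what is actually needed is the local identification of the elementary Artin--Schreier--Witt root stack with $[Y_{w}/(\Z/p^{n}\Z)]$ when the sections $f_{i}$ are chosen from the defining Witt equations. This is precisely Example~\ref{ex:elemASW} (together with Remark~\ref{rem:ASWlocalpts}), and it is the ingredient the paper's proof invokes at this point; citing or reproducing it closes your gap, whereas the residual-gerbe comparison as you describe it does not by itself. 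With those two repairs your proposal coincides with the paper's argument.
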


In Section~\ref{sec:universalstack}, we also package together the collection of $\Psi_{\overline{m}}^{-1}((L,s,f_{1},\ldots,f_{n})/X)$ into a universal Artin--Schreier--Witt root stack $\mathcal{ASW}_{X}$ and give a unified description of $\Z/p^{n}\Z$-covers in Theorem~\ref{thm:univASW}. 

Theorem~\ref{thm:wildstackyintro} generalizes to the case of a stacky point with cyclic-by-tame automorphism group (see Theorem~\ref{thm:cyclicbytame}), but more work will be needed to handle stacky points with more general automorphism groups. By classical ramification theory \cite[Ch.~IV]{ser2}, these can be of the form $P\rtimes\Z/r\Z$ where $P$ is a $p$-group and $r$ is prime to $p$. By iterating tame and wild root stacks, one can achieve many desired stacky structures. It is unclear how to globalize this procedure, as we do with each individual root stack using $[\A^{1}/\G_{m}]$ and $[\overline{\W}_{n}(\overline{m})/\W_{n}]$. However, see Section~\ref{sec:future} for a possible approach.

\subsection{Application: Canonical Rings of Stacky Curves}

In classical algebraic geometry, the {\it canonical ring} of a projective curve $X$ is defined as the graded ring 
$$
R(X) = \bigoplus_{k = 0}^{\infty} H^{0}(X,\omega_{X}^{\otimes k}),
$$
where $\omega_{X}$ is the canonical sheaf. The canonical ring contains important information about the geometry of $X$; for example, when $X$ is smooth of genus at least $2$, $\Proj R(X)$ is a model for $X$. Explicit descriptions of $R(X)$ exist, such as Petri's theorem (cf.~\cite[Sec.~1.1]{vzb}), which in turn provide explicit equations for $X$ inside projective space. 

Replacing $X$ with a stacky curve $\X$, one can similarly define a canonical ring $R(\X)$ in order to study models of $\X$ inside weighted projective space. Generalizing results like Petri's theorem, Voight and Zureick-Brown provide generators and relations for $R(\X)$ when $\X$ is a tame log stacky curve \cite[Thm.~1.4.1]{vzb}. 

For number theorists, one of the most useful applications of theorems like {\it loc.~cit.}~is to modular forms. When $\X$ is a modular stacky curve (that is, a modular curve with stacky structure encoding the automorphisms of elliptic curves with a given level stucture), a logarithmic version of $R(\X)$ is isomorphic to a graded ring of modular forms and the description in {\it loc.~cit.}~recovers formulas for generators and relations of rings of modular forms. Notably, this description holds in all characteristics, as long as the modular curve has no wild ramification. Nevertheless, many modular curves have wild ramification in characteristic $p$, such as $X(1)$ in characteristics $2$ and $3$, and therefore the results of \cite{vzb} do not apply. 

In \cite{kob}, we began investigating canonical rings of wild stacky curves. The starting place is a stacky Riemann--Hurwitz formula that holds in all characteristics: 

\begin{thm}[Stacky Riemann--Hurwitz, {\cite[Prop.~7.1]{kob}}]
\label{thm:RH}
For a stacky curve $\X$ with coarse moduli space $\pi : \X\rightarrow X$, the canonical divisors $K_{\X}$ and $K_{X}$ are related by the formula 
$$
K_{\X} = \pi^{*}K_{X} + \sum_{x\in\X(k)}\sum_{i = 0}^{\infty} (|G_{x,i}| - 1)x. 
$$
Here, $G_{x,i}$ is the $i$th group in the higher ramification filtration of the automorphism group $G_{x}$ at $x$. 
\end{thm}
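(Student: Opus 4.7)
The plan is to reduce the identity to the classical Riemann--Hurwitz formula on an étale-local presentation of $\X$ around each stacky point. Away from the stacky locus, $\pi$ is an isomorphism and every $G_x$ is trivial, so both sides of the formula agree there; thus it suffices to verify the formula in an étale neighborhood of each stacky point $x\in\X(k)$, in which the coarse space has a unique underlying point $\pi(x)$.

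At such a point I would invoke the local structure theorem for smooth Deligne--Mumford stacks of dimension one to present $\X$ étale-locally around $x$ as a quotient $[U/G_x]$, where $U$ is a smooth affine curve on which $G_x$ acts faithfully with a unique fixed point $y$ above $x$. Let $q:U\to[U/G_x]$ be the resulting $G_x$-torsor (hence étale) and set $\pi'=\pi\circ q:U\to X$. Then $\pi'$ is a Galois cover of smooth curves, totally ramified at $y$ with inertia $G_x$ and unramified elsewhere in the neighborhood.

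Next I would apply the classical Riemann--Hurwitz formula for $\pi'$, together with Serre's formula for the different in terms of the lower-numbering filtration \cite[Ch.~IV, Prop.~4]{ser2}, to obtain
\[
K_U \;=\; (\pi')^{*} K_{X} + \sum_{i=0}^{\infty}\bigl(|G_{x,i}| - 1\bigr)[y],
\]
identifying the $i$-th ramification group of $\pi'$ at $y$ with $G_{x,i}$, since both are computed from the same faithful action of $G_x$ on $\widehat{\mathcal{O}}_{U,y}$. Using that $q$ is étale gives $K_U = q^{*} K_{\X}$; the standard relation $|G_x|\,[x]=\pi^{*}[\pi(x)]$ on $\X$ combined with $(\pi')^{*}[\pi(x)]=|G_x|\,[y]$ on $U$ yields $q^{*}[x]=[y]$; and $(\pi')^{*}=q^{*}\pi^{*}$ by functoriality. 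Substituting these into the displayed identity and using the injectivity of $q^{*}$ on divisors (as $q$ is faithfully flat) will give the desired formula on $\X$.

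The main technical obstacle is producing the local presentation $[U/G_x]$ with a faithful action of $G_x$ on a smooth curve $U$ such that $\pi'$ is a genuine Galois cover to which classical Riemann--Hurwitz applies. The faithfulness of the tangent action at a stacky point, which is built into the definition of a stacky curve, is what ensures such a $U$ exists. In the wild case no further subtlety arises, because the contribution of the higher ramification groups is already encoded in Serre's different computation.
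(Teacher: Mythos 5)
Your argument is correct and is essentially the standard one; note that the theorem is quoted here from \cite[Prop.~7.1]{kob} rather than reproved in this paper, and the proof there proceeds along the same lines you describe: reduce via an \'{e}tale presentation $[U/G_{x}]$ near each stacky point to the classical Riemann--Hurwitz formula, with the different at the totally ramified point given by Serre's formula $\sum_{i\geq 0}(|G_{x,i}|-1)$, and transport the identity back along the \'{e}tale map $q:U\rightarrow[U/G_{x}]$ using $K_{U}=q^{*}K_{\X}$ and $q^{*}[x]=[y]$.

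One caveat, and it concerns exactly the setting this paper is about: your closing justification appeals to ``faithfulness of the tangent action,'' but in the wild case the linearized action at $y$ is generally \emph{not} faithful --- $G_{x,0}/G_{x,1}$ embeds in $k^{\times}$, so whenever $G_{x,1}\neq 1$ the action of $G_{x}$ on $\mathfrak{m}_{y}/\mathfrak{m}_{y}^{2}$ has nontrivial kernel (this failure of linearization is precisely why wild stacky points are not tame root stacks). What you actually need, and what is true, is faithfulness of the $G_{x}$-action on $U$ itself (equivalently on $\widehat{\mathcal{O}}_{U,y}$): since $\X$ is generically a scheme, the generic stabilizer is trivial, and an automorphism of the integral curve $U$ acting trivially on a dense open must be trivial. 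With that substitution your divisor computation is unaffected and the proof goes through.
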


Since the canonical sheaf $\omega_{\X}$ is the line bundle attached to the divisor $K_{\X}$, this result is one of the main tools for computing the canonical ring of $\X$ in any characteristic. An explicit example of $K_{\X}$ for a wild stacky curve is computed in \cite[Ex.~7.8]{kob}. At the time, the structure theory of wild stacky curves (in particular, their local root stack structure) was only developed for stacks with wild automorphism groups isomorphic to $\Z/p\Z$. The main results in this article allow us to extend the approaches in \cite{vzb,kob} to more general stacky curves. 

In particular, one would like descriptions of rings of modular forms like those of \cite[Ch.~6]{vzb} when the relevant modular curve is a wild stacky curve. In characteristics $2$ and $3$, the modular curve $\X(1)$ is wildly ramified at $j = 0 = 1728$ (see Examples~\ref{ex:M11char3} and~\ref{ex:M11char2}) and this produces wild ramification in many other modular curves, such as $\X_{0}(N)$ for certain $N$. Another example noted in \cite[Rmk.~5.3.11]{vzb} is the quotient $[X(p)/PSL_{2}(\F_{p})]$ in characteristic $3$, which is a stacky $\P^{1}$ with two stacky points, one having tame automorphism group $\Z/p\Z$ and the other having wild automorphism group $S_{3}$. We will compute canonical divisors for these curves in Section~\ref{sec:canring}. In a forthcoming article with David Zureick-Brown, we will give a description of the corresponding rings of modular forms for many stacky modular curves using this theory.

\subsection{Relation to Other Work}

Moduli spaces of wildly ramified curves in characteristic $p > 0$ have been studied in a number of places. In \cite{pri}, the author constructs a moduli space for $G$-covers of curves with inertia groups of the form $\Z/p\Z\rtimes\Z/r\Z$ and prescribed ramification jumps. In particular, [{\it loc.~cit.}, Thm.~3.3.4] describes the moduli of covers of $\P^{1}$ branched at one point, which is the situation we will analyze in detail in Example~\ref{ex:key} for inertia groups $\Z/p^{n}\Z$. To turn this moduli problem into a moduli \emph{stack}, one could replace the configuration space $(\G_{m}\times\G_{a}^{r - 1})/\mu_{p - 1}$ from [{\it loc.~cit.}, Def.~2.2.5] with the quotient stack $[(\G_{m}\times\G_{a}^{r - 1})/\mu_{p - 1}]$, whose coarse space is the configuration space. It is likely that certain substacks of this stack correspond to refinements of the moduli problem of $G$-covers. 

Along these lines, the authors in \cite{dh} stratify the moduli space of $\Z/p^{n}\Z$-covers by specifying the sequence of conductors in the tower of $\Z/p\Z$-subcovers. These strata are refined moduli problems represented by algebraic stacks [{\it loc.~cit}, Prop.~3.4, Cor.~3.5] and the authors identify irreducible components of these stacks. It is likely that their moduli stacks have connections to the stacks described in Section~\ref{sec:universalstack}, though we will leave such a description to future work. 

The stacks in Section~\ref{sec:universalstack} also has connections to the moduli stacks of formal $G$-torsors considered in \cite{ty}. In particular, when $G = \Z/p^{n}\Z$, these moduli stacks can be filled out by Artin--Schreier--Witt stacks; see Example~\ref{ex:TY}. 

Finally, the structure theorem~\ref{thm:wildstackyintro} can be viewed as a wild analogue of the structure theory in \cite{gs}, for stacky curves. Further work is needed to extend the theory beyond dimension $1$ and, as mentioned in Section~\ref{sec:future}, beyond the cyclic wild case.

\subsection{Outline of the Paper}

The paper is organized as follows. In Section~\ref{sec:stackycurves}, we recall the basic geometry of stacky curves. Section~\ref{sec:ASWtheory} is a brief survey of wild ramification and Artin--Schreier--Witt theory. To carry these tools over to stacky curves, we use a construction of Garuti \cite{gar} which is described in Section~\ref{sec:garuti}. The construction of Artin--Schreier--Witt root stacks is carried out in Section~\ref{sec:AWSrootstacks}, followed by our main classification theorems for wild stacky curves in Section~\ref{sec:mainthms}. Section~\ref{sec:universalstack} describes how to capture all Artin--Schreier--Witt covers of curves using a limit of Artin--Schreier--Witt root stacks. Finally, in Section~\ref{sec:canring}, we apply the results here and in \cite{kob} to compute several examples of canonical rings of stacky curves. 

The author would like to thank Andrew Obus and David Zureick-Brown for their guidance on this project. Particular thanks go to David for suggesting the proof of Lemma~\ref{lem:doublequotient}.


\section{Stacky Curves}
\label{sec:stackycurves}

In this section, we collect the basic definitions and properties for stacky curves needed for later sections. 

\subsection{Review of Stacks}

%
%

Let $\X$ be a Deligne--Mumford stack over a scheme $S$, i.e.~an algebraic stack admitting an \'{e}tale presentation $U\rightarrow\X$ where $U$ is a smooth $S$-scheme. The set of points of $\X$, denoted $|\X|$, is defined to be the set of equivalence classes of morphisms $x : \Spec k\rightarrow\X$, where $k$ is a field, and where two points $x : \Spec k\rightarrow\X$ and $x' : \Spec k'\rightarrow\X$ are said to be equivalent if there exists a field $L\supseteq k,k'$ such that the diagram 
\begin{center}
\begin{tikzpicture}[scale=1.5]
  \node at (0,0) (a) {$\Spec L$};
  \node at (1,1) (b1) {$\Spec k$};
  \node at (1,-1) (b2) {$\Spec k'$};
  \node at (2,0) (c) {$\X$};
  \draw[->] (a) -- (b1);
  \draw[->] (a) -- (b2);
  \draw[->] (b1) -- (c) node[above,pos=.6] {$x$};
  \draw[->] (b2) -- (c) node[above,pos=.5] {$x'$};
\end{tikzpicture}
\end{center}
commutes. The {\it automorphism group} of a point $x\in |\X|$ is defined to be the pullback $G_{x}$ in the following diagram: 
\begin{center}
\begin{tikzpicture}[xscale=2.2,yscale=2]
  \node at (0,1) (a) {$G_{x}$};
  \node at (1,1) (b) {$\Spec k$};
  \node at (0,0) (c) {$\X$};
  \node at (1,0) (d) {$\X\times_{S}\X$};
  \draw[->] (a) -- (b);
  \draw[->] (a) -- (c);
  \draw[->] (b) -- (d) node[right,pos=.5] {$(x,x)$};
  \draw[->] (c) -- (d) node[above,pos=.5] {$\Delta_{\X}$};
\end{tikzpicture}
\end{center}
A geometric point is a point $\bar{x} : \Spec k\rightarrow\X$ where $k$ is algebraically closed. 

\begin{rem}
\label{rem:DMfiniteaut}
Colloquially, a Deligne--Mumford stack is said to have finite automorphism groups. The technical fact is that an algebraic stack over $S$ with finitely presented diagonal is Deligne--Mumford if and only for every geometric point $\bar{x}$ of $\X$, the automorphism group $G_{\bar{x}}$ is a reduced, finite group scheme \cite[Thm.~8.3.3, Rmk.~8.3.4]{ols}. When $S = \Spec\bar{k}$ for an algebraically closed field $\bar{k}$, this is equivalent to saying each automorphism group $G_{\bar{x}}$ is finite. 
\end{rem}

A {\it stacky curve} is a smooth, separated, connected, one-dimensional Deligne--Mumford stack which is generically a scheme, i.e.~there exists an open subscheme $U$ of the coarse moduli space $X$ of $\X$ such that the induced map $\X\times_{X}U\rightarrow U$ is an isomorphism. 

Finally, when $S = \Spec k$, a {\it tame stack} is a stack $\X$ for which the orders of the (finite, by Remark~\ref{rem:DMfiniteaut}) automorphism groups of its points are coprime to $\char k$; otherwise, $\X$ is said to be a {\it wild stack}. 

\subsection{Quotients}
\label{sec:quotientstack}

Let $X$ be a smooth, projective $k$-scheme, where $k$ is a field. For a smooth group scheme $G\subseteq\Aut(X)$, the {\it quotient stack} $[X/G]$ is defined to be the category fibred in groupoids over $\cat{Sch}_{k}$ whose objects are triples $(T,P,\pi)$, where $T\in\cat{Sch}_{k}$, $P$ is a $G\times_{k}T$-torsor for the \'{e}tale site $T_{\et}$ and $\pi : P\rightarrow X\times_{k}T$ is a $G\times_{k}T$-equivariant morphism. Morphisms $(T',P',\pi')\rightarrow (T,P,\pi)$ in $[X/G]$ are given by compatible morphisms of $k$-schemes $\varphi : T'\rightarrow T$ and $G\times_{k}T'$-torsors $\psi : P'\rightarrow \varphi^{*}P$ such that $\varphi^{*}\pi\circ\psi = \pi'$. This is often summarized by the diagram 
\begin{center}
\begin{tikzpicture}[xscale=2.2,yscale=2]
  \node at (0,1) (a) {$P$};
  \node at (1,1) (b) {$X$};
  \node at (0,0) (c) {$T$};
  \node at (1,0) (d) {$[X/G]$};
  \draw[->] (a) -- (b) node[above,pos=.5] {$\pi$};
  \draw[->] (a) -- (c);
  \draw[->,dashed] (b) -- (d);
  \draw[->,dashed] (c) -- (d);
\end{tikzpicture}.
\end{center}

%
%
%

By \cite[11.3.1]{ols}, every stacky curve $\X$ is, \'{e}tale locally, a quotient stack $[U/G]$, where $G$ may be taken to be the automorphism group of a geometric point of $\X$, hence a finite group. It follows from ramification theory \cite[Ch.~IV]{ser2} (see also Section~\ref{sec:ram}) that when $\X$ is tame, every automorphism group of $\X$ is cyclic. As a result, tame stacky curves can be completely described by their coarse space, together with a finite list of {\it stacky points} (points with nontrivial automorphism groups) and the orders of their automorphism groups. 

In contrast, if $\X$ is wild, it may have noncyclic -- even nonabelian! -- automorphism groups, coming from higher ramification data (again, see Section~\ref{sec:ram}). The main goal of this article is to describe how wild stacky curves can still be classified by specifying data on their coarse space. 

The following result will be used later to construct isomorphisms between stacks. 

\begin{lem}
\label{lem:CFGequiv}
If $F : \X\rightarrow\Y$ is a functor between categories fibred in groupoids over $\cat{Sch}_{S}$, then $F$ is an equivalence of categories fibred in groupoids if and only if for each $S$-scheme $T$, the functor $F_{T} : \X(T)\rightarrow\Y(T)$ is an equivalence of categories. 
\end{lem}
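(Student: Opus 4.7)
The plan is to prove both implications by unwinding the definitions, with the reverse direction requiring an explicit construction of a quasi-inverse.

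For the forward direction, suppose $F$ is an equivalence of CFGs, so that by definition there is a morphism of CFGs $G : \Y \rightarrow \X$ over $\cat{Sch}_{S}$ together with $2$-isomorphisms $\eta : \operatorname{id}_{\X} \Rightarrow GF$ and $\epsilon : FG \Rightarrow \operatorname{id}_{\Y}$. Since $G$ is a morphism of CFGs, it restricts to a functor $G_{T} : \Y(T) \rightarrow \X(T)$ on fibers over each $S$-scheme $T$, and the natural isomorphisms $\eta, \epsilon$ restrict to natural isomorphisms $\eta_{T}, \epsilon_{T}$ witnessing $G_{T}$ as a quasi-inverse to $F_{T}$.

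For the reverse direction, the plan is to construct a quasi-inverse $G : \Y \rightarrow \X$ fiber by fiber. For each object $y \in \Y(T)$, choose via essential surjectivity of $F_{T}$ an object $G(y) \in \X(T)$ and an isomorphism $\alpha_{y} : F(G(y)) \xrightarrow{\sim} y$ over $T$ (invoking the axiom of choice on the class of objects of $\Y$). To define $G$ on a morphism $\phi : y' \rightarrow y$ covering $f : T' \rightarrow T$, first form a cartesian lift $c : f^{*}G(y) \rightarrow G(y)$ in $\X$ using the fibration structure. Then $F(c)$ is cartesian in $\Y$ over $f$, so there is a unique isomorphism $\beta : F(f^{*}G(y)) \xrightarrow{\sim} f^{*}y$ over $T'$ compatible with $\alpha_{y}$ and the cartesian lift of $y$. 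Factoring $\phi$ through its cartesian part, transport via $\alpha_{y'}$ and $\beta$ to obtain a vertical morphism $F(G(y')) \rightarrow F(f^{*}G(y))$ in $\Y(T')$, and then use full faithfulness of $F_{T'}$ to lift this uniquely to a vertical morphism $G(y') \rightarrow f^{*}G(y)$ in $\X(T')$. Composing with $c$ defines $G(\phi)$, which by construction covers $f$.

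The main obstacle will be this construction on morphisms: one must check that $G$ preserves composition and identities (which follows from the uniqueness clause in full faithfulness applied fiberwise), that $G$ sends cartesian arrows to cartesian arrows (so that $G$ is a morphism of CFGs), and that the chosen $\alpha_{y}$ assemble into a natural isomorphism $FG \Rightarrow \operatorname{id}_{\Y}$. Once these compatibilities are verified, the remaining natural isomorphism $\operatorname{id}_{\X} \Rightarrow GF$ is produced by applying full faithfulness of each $F_{T}$ to the relation $F \circ (GF) = (FG) \circ F \cong F$. All remaining checks reduce to routine diagram chases in the fibers, so the heart of the proof is the careful construction of $G$ on non-vertical morphisms.
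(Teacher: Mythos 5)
Your argument is correct, but it is worth noting that the paper does not actually write out a proof: it disposes of the lemma in one line by citing the Stacks Project [SP, Tag 003Z], of which this statement is a special case. Your explicit construction is essentially the argument underlying that citation, so you have in effect reproduced the reference rather than found a new route. Two remarks on your write-up. First, in the reverse direction your list of verifications can be shortened: in a category fibred in \emph{groupoids} every morphism is cartesian, so the worry that $G$ must send cartesian arrows to cartesian arrows is vacuous --- all you need is that $G$ strictly commutes with the projections to $\cat{Sch}_{S}$, which holds by construction since $G(\phi)$ covers $f$. Likewise, functoriality and the naturality of the $\alpha_{y}$ are most cleanly seen by characterizing $G(\phi)$ as the unique arrow over $f$ satisfying $\alpha_{y}\circ F(G(\phi)) = \phi\circ\alpha_{y'}$; the uniqueness here uses faithfulness of $F$ on arbitrary (not just vertical) arrows, which follows from fibrewise faithfulness together with factorization through a cartesian lift --- a small point your sketch glosses over but which is routine. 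Second, in the forward direction you implicitly use that $2$-morphisms of categories fibred in groupoids are by definition vertical (their components lie in the fibres), which is exactly what makes the restriction of $\eta$ and $\epsilon$ to $\eta_{T}$, $\epsilon_{T}$ legitimate; it is worth saying this explicitly. With those points made, your proof is complete and matches the standard one the paper appeals to.
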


\begin{proof}
This is a special case of~\cite[Tag 003Z]{sp}. 
\end{proof}

In Section~\ref{sec:ASWroots}, we will study towers of quotient stacks, for which we will make use of the following result. 

\begin{lem}
\label{lem:doublequotient}
Let $G$ be a group scheme acting on a scheme $X$ as in Subsection~\ref{sec:quotientstack} and let $H\subseteq G$ be a normal subgroup scheme. Then $[X/G] \cong [[X/H]/(G/H)]$. 
\end{lem}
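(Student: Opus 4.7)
By Lemma~\ref{lem:CFGequiv}, it is enough to construct, naturally in an $S$-scheme $T$, an equivalence of groupoids $[X/G](T)\simeq [[X/H]/(G/H)](T)$. On the left, objects are pairs $(P,\alpha)$ with $P\to T$ a $G$-torsor (in the étale topology) and $\alpha\colon P\to X$ a $G$-equivariant morphism. On the right, unwinding the definition of the quotient of the stack $[X/H]$ by the group $G/H$, objects are quadruples $(Q,P',\alpha',\sigma)$ where $Q\to T$ is a $(G/H)$-torsor, $P'\to Q$ is an $H$-torsor, $\alpha'\colon P'\to X$ is $H$-equivariant (so that $(P',\alpha')$ is the map $Q\to [X/H]$), and $\sigma$ records the compatibility of this map with the $(G/H)$-action on $Q$.

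The forward functor sends $(P,\alpha)$ to $(P/H,\,P,\,\alpha,\,\sigma)$, where the compatibility $\sigma$ is extracted from the $G$-equivariance of $\alpha$. Normality of $H$ in $G$ is used here both to descend the $G$-action on $P$ to a $(G/H)$-action on $P/H$ and to conclude that $P/H\to T$ is a $(G/H)$-torsor (freeness and transitivity on geometric fibres follow from those of $P\to T$, and étale-local triviality is preserved under the quotient $P\mapsto P/H$).

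The reverse functor takes $(Q,P',\alpha',\sigma)$ to $(P',\alpha')$, with $P'\to T$ the composition $P'\to Q\to T$. The key claim is that this composition is a $G$-torsor and $\alpha'$ is $G$-equivariant. The $G$-action on $P'$ is assembled from the fibrewise $H$-action of the torsor $P'\to Q$ together with a lift, provided by $\sigma$, of the $(G/H)$-action on $Q$ to $P'$; the extension $1\to H\to G\to G/H\to 1$ and normality of $H$ ensure these combine into a well-defined $G$-action. The torsor axioms can then be checked étale-locally after first trivializing $Q\to T$ and then trivializing $P'\to Q$, at which point $P'$ becomes $H\times (G/H)\cong G$ over $T$. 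The main obstacle is verifying the cocycle-type compatibilities for the lift coming from $\sigma$, but this reduces to standard descent facts for torsors under a group extension.

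Finally, it is routine to verify that the two functors are mutually inverse up to natural isomorphism (the composite one way recovers $P$ as the $H$-torsor over $P/H$, and the other way recovers $Q$ as $P'/H$) and that the whole construction is functorial in $T$, completing the application of Lemma~\ref{lem:CFGequiv}.
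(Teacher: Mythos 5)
Your argument is correct in outline, and it reduces to $T$-points via Lemma~\ref{lem:CFGequiv} just as the paper does, but the equivalence itself is built differently. The paper's proof keeps the torsor fixed (``the identity on torsors $P$'') and re-reads the equivariant map by choosing an isomorphism $G(T)\cong H(T)\times (G/H)(T)$; you instead send $(P,\alpha)$ to the quotient $(G/H)$-torsor $P/H$ equipped with the $H$-torsor $P\rightarrow P/H$ and the induced map to $[X/H]$, and reassemble a $G$-torsor in the reverse direction. Your route is the standard contracted-product/pullback argument, and it has the advantage of not presupposing any splitting of $1\rightarrow H\rightarrow G\rightarrow G/H\rightarrow 1$ on $T$-points --- a hypothesis that fails for non-split extensions such as $G=\Z/p^{2}\Z$, $H\cong\Z/p\Z$, which is exactly the case the lemma is applied to in Example~\ref{ex:key} --- so it is the more robust formulation. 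What it costs you is the 2-categorical bookkeeping you flag but do not carry out: spelling out what a $(G/H)$-action on the stack $[X/H]$ and an equivariant morphism $Q\rightarrow[X/H]$ mean (your datum $\sigma$ together with its coherence conditions), and verifying in the reverse direction that the lift of the $(G/H)$-action provided by $\sigma$ twists the $H$-action by conjugation so as to produce an honest $G$-action and a $G$-torsor; this is where normality genuinely enters and where the cocycle conditions must be checked rather than asserted. That verification is left as a sketch, but at a level of detail comparable to (indeed somewhat beyond) the paper's own two-sentence proof.
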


\begin{proof}
By Lemma~\ref{lem:CFGequiv}, it is enough to check the isomorphism on groupoids $[X/G](T) \cong [[X/H]/(G/H)](T)$. At this level, the isomorphism is the identity on torsors $P$ and identifies the morphisms $P\rightarrow X\times_{k}T$ and $P\rightarrow [X/H]\times_{k}T$ via any fixed isomorphism between $G(T)$ and $H(T)\times(G/H)(T)$. 
\end{proof}



\subsection{Normalization for Stacks}

In this section, we recall the notions of normalization and relative normalization for stacks, following \cite[Sec.~3]{kob}; see also \cite[Appendix A]{ab}. 

\begin{defn}
Let $\X$ be a locally noetherian algebraic stack over $S$. Then $\X$ is {\bf normal} if there is a smooth presentation $U\rightarrow\X$ where $U$ is a normal scheme. The {\bf relative normalization} of $\X$ is an algebraic stack $\X^{\nu}$ and a representable morphism of stacks $\X^{\nu}\rightarrow\X$ such that for any smooth morphism $U\rightarrow\X$ where $U$ is a scheme, $U\times_{\X}\X^{\nu}$ is the relative normalization of $U\rightarrow S$. 
\end{defn}

\begin{lem}[{\cite[Lem.~A.5]{ab}}]
For a locally noetherian algebraic stack $\X$, the relative normalization $\X^{\nu}$ is uniquely determined by the following two properties: 
\begin{enumerate}[\quad (1)]
  \item $\X^{\nu}\rightarrow\X$ is an integral surjection which induces a bijection on irreducible components. 
  \item $\X^{\nu}\rightarrow\X$ is terminal among morphisms of algebraic stacks $\Zz\rightarrow\X$, where $\Zz$ is normal, which are dominant on irreducible components. 
\end{enumerate}
\end{lem}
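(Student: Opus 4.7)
The plan is to verify that the relative normalization $\X^{\nu}\to\X$ defined via smooth presentations satisfies (1) and (2), and then derive uniqueness from (2) in the standard categorical way.

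Uniqueness is immediate once existence is established: if $\X'\to\X$ and $\X''\to\X$ both satisfy the conditions and are each normal (as (1) combined with the construction ensures), the universal property in (2) produces mutual factorizations, and the uniqueness clause forces the compositions to equal the identities.

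For existence and the verification of (1) and (2), I would fix a smooth presentation $p\colon U\to\X$ with $U$ a locally noetherian scheme. The analogous scheme-theoretic statement is standard (see Stacks Project, Tag~035Q): $U^{\nu}\to U$ is an integral surjection that induces a bijection on irreducible components and is terminal among morphisms from normal schemes that are dominant on irreducible components. The key technical ingredient is that relative normalization commutes with smooth base change, i.e.\ for any smooth $V\to U$ the relative normalization of $V$ over $S$ equals $V\times_{U}U^{\nu}$. Granting this, the groupoid $U\times_{\X}U\rightrightarrows U$ lifts to a groupoid on $U^{\nu}$ whose stack quotient is the desired $\X^{\nu}$; property (1) is smooth-local on $\X$ and reduces to the scheme case, while normality of $\X^{\nu}$ follows from normality of $U^{\nu}$.

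For property (2), given $\Zz\to\X$ with $\Zz$ normal and dominant on irreducible components, I would pull back along $p$ to obtain $\Zz\times_{\X}U\to U$. Because normality both ascends and descends along smooth morphisms, the source is normal, and the resulting map to $U$ is still dominant on components, so the scheme-case universal property gives a unique factorization through $U^{\nu}$. Smooth base change compatibility ensures these factorizations agree on the double fiber product $(\Zz\times_{\X}U)\times_{\Zz}(\Zz\times_{\X}U)$, so they descend uniquely to a morphism $\Zz\to\X^{\nu}$ over $\X$.

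The main obstacle is the smooth base change compatibility of relative normalization in the locally noetherian (not necessarily finite-type) setting. This compatibility underpins both the construction of $\X^{\nu}$ as a stack and the descent of the universal factorization in (2); once it is in hand, the remainder of the argument is formal. Establishing it rests on the facts that normalization commutes with localization and with \'{e}tale morphisms, combined with the local structure of smooth morphisms, but some care is required since we do not assume finite type hypotheses.
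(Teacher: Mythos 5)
The paper itself gives no argument for this lemma; it is imported directly from the cited reference [AB, Lem.~A.5], so there is no in-paper proof to compare against. Your sketch --- reduce to the scheme case (Stacks Project 035Q) via a smooth presentation $U\rightarrow\X$, using that normalization commutes with smooth base change, construct $\X^{\nu}$ by descending the induced groupoid $R^{\nu}\rightrightarrows U^{\nu}$, and check (1) and (2) smooth-locally --- is exactly the expected argument, and correctly isolates smooth base change compatibility of normalization (valid in the locally noetherian setting) as the key input. Two steps are stated too loosely, though both are repairable. First, in the uniqueness argument, normality of a competing candidate $\X'$ satisfying (1) and (2) is not ensured by (1), nor by ``the construction,'' which only applies to the $\X^{\nu}$ you built; note that the identity $\X\rightarrow\X$ is an integral surjection inducing a bijection on components and trivially receives a unique factorization from every normal $\Zz\rightarrow\X$, so under that weak reading uniqueness would fail whenever $\X$ is not normal. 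The statement must be read (and used) as saying $\X^{\nu}\rightarrow\X$ is itself an object of the category of normal stacks over $\X$ dominant on components and is terminal there; then uniqueness is purely formal. Second, in verifying (2) you apply the scheme-case universal property to $\Zz\times_{\X}U\rightarrow U$, but $\Zz\times_{\X}U$ is an algebraic stack (smooth over $\Zz$), not a scheme; you need one more layer, e.g.\ choose a smooth presentation $W\rightarrow\Zz$ by a normal scheme, factor $W\times_{\X}U\rightarrow U$ through $U^{\nu}$ by the scheme statement, and then descend (morphisms to $\X^{\nu}$ glue from data on double products since $\X^{\nu}\rightarrow\X$ is representable, so the relevant Hom-categories are discrete), checking along the way that ``dominant on irreducible components'' is preserved under these smooth pullbacks. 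With those two adjustments the outline is sound and matches the proof one would extract from the cited source.
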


\begin{defn}
\label{defn:normalpullback}
Let $\X,\Y$ and $\Zz$ be algebraic stacks and suppose there are morphisms $\Y\rightarrow\X$ and $\Zz\rightarrow\X$. Define the {\bf normalized pullback} $\Y\times_{\X}^{\nu}\Zz$ to be the relative normalization of the fibre product $\Y\times_{\X}\Zz$. 
\end{defn}

As in \cite{kob}, we will write the normalized pullback as a diagram 
\begin{center}
\begin{tikzpicture}[scale=2]
  \node at (0,1) (a) {$\Y\times_{\X}^{\nu}\Zz$};
  \node at (1,1) (b) {$\Zz$};
  \node at (0,0) (c) {$\Y$};
  \node at (1,0) (d) {$\X$};
  \draw[->] (a) -- (b);
  \draw[->] (a) -- (c);
  \draw[->] (b) -- (d);
  \draw[->] (c) -- (d);
  \node at (.3,.7) {$\nu$};
  \draw (.2,.6) -- (.4,.6) -- (.4,.8);
\end{tikzpicture}
\end{center}


\section{Artin--Schreier--Witt Theory and Cyclic Covers}
\label{sec:ASWtheory}

In \cite{kob}, the author's construction of the {\it Artin--Schreier root stack} solves the problem of taking $p$th roots of line bundles on a stacky curve in characteristic $p > 0$, but one may want to compute roots of a line bundle of arbitrary order (and we will see there is good motivation for this). As for local fields, a geometric version of Artin--Schreier--Witt theory will allow us to take $p^{n}$th roots of line bundles for $n > 1$. We give the basic outline of the theory in this section.

\subsection{Artin--Schreier Theory}

Suppose $k$ is a local field of characteristic $p > 0$ and $L/k$ is a Galois extension with group $G = \Z/p^{n}\Z$. When $n = 1$, such extensions are all of the form $L = k[x]/(x^{p} - x - a)$ for some $a\in k$, with isomorphism classes of extensions corresponding to the valuation $v(a)$. When $n = 2$, write $L/k$ as a tower $L\supseteq M\supseteq k$, where $L/M$ and $M/k$ are both Galois extensions with group $\Z/p\Z$. Then by Artin--Schreier theory, 
$$
M = k[x]/(x^{p} - x - a) \qquad\text{and}\qquad L = M[z]/(z^{p} - z - b)
$$
for $a\in k\smallsetminus\wp(k)$ and $b\in M\smallsetminus\wp(M)$ -- here, $\wp$ denotes the map $c\mapsto c^{p} - c$. It turns out (see \cite{op}) that the extension $L/k$ itself can be defined by the equations 
$$
y^{p} - y = x \qquad\text{and}\qquad z^{p} - z = \frac{x^{p} + y^{p} - (x + y)^{p}}{p} + w
$$
where both $x$ and $w$ lie in $k$. Compare this to a $\Z/p\Z\times\Z/p\Z$-extension, which can be written as a tower of $\Z/p\Z$-extensions in multiple ways. The fact that $L/k$ is cyclic is reflected in the above equations defining the extension. To see this explicitly, suppose $H = \Gal(M/k) \cong \la\sigma\ra$ where $|\sigma| = p$. Then $\sigma$ acts on $M = k[x]/(x^{p} - x - a)$ via $\sigma(x) = x + 1$. Moreover, $\sigma$ generates $G = \Gal(L/k)$ if and only if 
$$
k[y,z]/(z^{p} - z - b) = k[y,z]/(z^{p} - z - \sigma(b))
$$
which in turn is equivalent to having $\sigma(b) = b + \wp(b')$ for some $b'\in M$. It's easy to see that when $L/k$ is Galois of order $p^{2}$ and factors as the tower above, then $\sigma(b) \equiv b \mod{\wp(M)}$ occurs precisely when $G \cong \Z/p\Z\times\Z/p\Z$, while $\sigma(b)\not\equiv b \mod{\wp(M)}$ coincides with the case $G \cong \Z/p^{2}\Z$.

\subsection{Artin--Schreier--Witt Theory}

For a general cyclic extension of order $p^{n}$, Artin--Schreier--Witt theory and the arithmetic of Witt vectors encode the above automorphism data in a systematic way. The basic theory can be found in various places, including \cite[p.~330]{lang}.

Assume that $k$ is a field of characteristic $p > 0$ and $A$ is a $k$-algebra. Let $\W(A)$ be the ring of $p$-typical Witt vectors over $A$ and for each $n\geq 1$, let $\W_{n}(A)$ be the ring of Witt vectors of length $n$ over $A$, i.e.~the image of the ring homomorphism 
\begin{align*}
  t_{n} : \W(A) &\longrightarrow \W(A)\\
  (a_{0},\ldots,a_{n - 1},a_{n},a_{n + 1},\ldots) &\longmapsto (a_{0},\ldots,a_{n - 1},0,0,\ldots). 
\end{align*}
We write an element of $\W_{n}(A)$ as $(a_{0},\ldots,a_{n - 1})$. The Verschiebung operator defined by $V : \W(A)\rightarrow\W(A),(a_{0},a_{1},\ldots)\mapsto (0,a_{0},a_{1},\ldots)$ is an abelian group homomorphism, and moreover, $\W_{n}(A) \cong \W(A)/V^{n}\W(A)$. 
%
On the other hand, the Frobenius operator $F : \W(A)\rightarrow\W(A)$, defined by $(a_{0},a_{1},\ldots)\mapsto (a_{0}^{p},a_{1}^{p},\ldots)$, is a ring homomorphism generalizing the usual Frobenius on $A$. 


For $A = k$, set $\wp = F - \operatorname{id} : x\mapsto Fx - x$. Then $\wp$ is an abelian group homomorphism $\W_{n}(k)\rightarrow\W_{n}(k)$ generalizing the isogeny $\G_{a}\rightarrow\G_{a},x\mapsto x^{p} - x$. 
%

\begin{ex}
When $k = \F_{p}$, the field of $p$ elements, we have an isomorphism 
\begin{align*}
  \W_{n}(\F_{p}) &\longrightarrow \Z/p^{n}\Z\\
  (x_{0},x_{1},\ldots,x_{n - 1}) &\longmapsto \bar{x}_{0} + p\bar{x}_{1} + \ldots + p^{n - 1}\bar{x}_{n - 1}
\end{align*}
for all $n\geq 1$, where $\bar{x}_{i}$ denotes the image of $x_{i}$ under the canonical surjection $\Z/p\Z\rightarrow \Z/p^{n}\Z$. These commute with the natural maps $\Z/p^{n}\Z\rightarrow\Z/p^{n + 1}\Z$, giving an isomorphism 
$$
\W(\F_{p}) \xrightarrow{\;\sim\;} \invlim \Z/p^{n}\Z = \Z_{p}. 
$$
\end{ex}

Suppose $x\in\W_{n}(k)$ and $\alpha\in\W_{n}(k^{sep})$ are Witt vectors such that $\wp(\alpha) = x$. If $\alpha = (\alpha_{0},\ldots,\alpha_{n - 1})$, we write $k(\wp^{-1}x) = k(\alpha_{0},\ldots,\alpha_{n - 1})$ as a field extension of $k$. The following theorem characterizes cyclic extensions of degree $p^{n}$ of $k$. 

\begin{thm}
\label{thm:AWSextclassification}
Let $k$ be a field of characteristic $p > 0$. Then for each $n\geq 1$, there is a bijection 
\begin{align*}
  \left\{\begin{matrix} \text{cyclic extensions } L/k \text{ with} \\ [L : k] = p^{n}\end{matrix}\right\} &\longleftrightarrow \W_{n}(k)/\wp(\W_{n}(k))\\
  L = k(\wp^{-1}x) &\longleftrightarrow x. 
\end{align*}
\end{thm}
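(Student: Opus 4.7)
The plan is to derive the classification from an ``additive Kummer theory'' for Witt vectors, mirroring the classical Artin--Schreier case $n = 1$. First, I would establish the fundamental short exact sequence of $G$-modules for $G := \Gal(k^{sep}/k)$:
\begin{equation*}
0 \longrightarrow \W_{n}(\F_{p}) \longrightarrow \W_{n}(k^{sep}) \xrightarrow{\;\wp\;} \W_{n}(k^{sep}) \longrightarrow 0,
\end{equation*}
where $G$ acts trivially on $\W_{n}(\F_{p}) \cong \Z/p^{n}\Z$. The identification $\ker \wp = \W_{n}(\F_{p})$ is immediate from $\wp = F - \operatorname{id}$, since $F\alpha = \alpha$ forces every coordinate of $\alpha$ to be fixed by the $p$-power map and hence to lie in $\F_{p}$. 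Surjectivity of $\wp$ on $\W_{n}(k^{sep})$ I would prove by induction on $n$: the $n=1$ case is the classical solvability of $x^{p}-x = a$ in $k^{sep}$, and the Witt vector addition formulas reduce each successive coordinate equation to another separable polynomial equation of Artin--Schreier form over $k^{sep}$.

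Next, I would take continuous Galois cohomology of the above sequence. Combined with additive Hilbert~90 for Witt vectors, $H^{1}(G, \W_{n}(k^{sep})) = 0$, the long exact sequence collapses to an isomorphism
\begin{equation*}
\W_{n}(k)/\wp(\W_{n}(k)) \;\xrightarrow{\;\sim\;}\; H^{1}(G, \W_{n}(\F_{p})) = \Hom(G,\Z/p^{n}\Z).
\end{equation*}
The vanishing $H^{1}(G,\W_{n}(k^{sep})) = 0$ follows by induction on $n$ from the short exact sequence
\begin{equation*}
0 \longrightarrow \W_{n - 1}(k^{sep}) \xrightarrow{\;V\;} \W_{n}(k^{sep}) \longrightarrow k^{sep} \longrightarrow 0
\end{equation*}
(using the Verschiebung and the first-coordinate projection, both of which are $G$-equivariant) together with the classical additive Hilbert~90 $H^{1}(G, k^{sep}) = 0$.

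The connecting homomorphism in the cohomology sequence is explicit: a class $[x]$ is sent to the character $\chi_{x} : G\rightarrow\Z/p^{n}\Z$ defined by $\sigma\mapsto \sigma\alpha - \alpha$ for any $\alpha\in\W_{n}(k^{sep})$ with $\wp(\alpha) = x$ (the difference lies in $\ker \wp = \W_{n}(\F_{p})$ and is independent of the choice of $\alpha$). By construction $\ker\chi_{x} = \Gal(k^{sep}/k(\wp^{-1}x))$, so the fixed field $L_{x} := k(\wp^{-1}x)$ is cyclic over $k$ of degree equal to the order of the image of $\chi_{x}$. Restricting the isomorphism to those classes $[x]$ whose image in $\Hom(G,\Z/p^{n}\Z)$ is a surjective character yields the desired bijection with cyclic extensions of degree exactly $p^{n}$.

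The main technical obstacle is verifying additive Hilbert~90 for Witt vectors and the surjectivity of $\wp$ on $\W_{n}(k^{sep})$; both rest on a careful induction through the Verschiebung filtration together with the separability of the polynomials $x^{p} - x - a$ over $k^{sep}$. Once the cohomological isomorphism is in place, reading off the correspondence between classes $[x]$ and cyclic extensions $L_{x}$ is essentially bookkeeping.
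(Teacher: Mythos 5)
The paper never proves this theorem -- it is quoted as classical Artin--Schreier--Witt theory (with a pointer to Lang) -- so there is no internal argument to compare against; your cohomological derivation is the standard textbook route, and its core is correct. The identification $\ker\wp = \W_{n}(\F_{p})$ is right because $F$ acts coordinatewise by $p$th powers over a ring of characteristic $p$; surjectivity of $\wp$ on $\W_{n}(k^{sep})$ does reduce, coordinate by coordinate, to separable equations of the form $y^{p}-y = a$; the vanishing $H^{1}(G,\W_{n}(k^{sep})) = 0$ follows as you say by induction along the Verschiebung sequence together with additive Hilbert~90 for $k^{sep}$; and the connecting map $[x]\mapsto(\sigma\mapsto\sigma\alpha-\alpha)$, $\wp(\alpha)=x$, gives the isomorphism $\W_{n}(k)/\wp(\W_{n}(k))\cong\Hom(G,\Z/p^{n}\Z)$ (continuous homomorphisms), with $\ker\chi_{x} = \Gal(k^{sep}/k(\wp^{-1}x))$ and $[k(\wp^{-1}x):k]$ equal to the order of $[x]$.

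The one step that does not hold as literally stated is your last sentence. Surjective characters $G\rightarrow\Z/p^{n}\Z$ are not in bijection with cyclic extensions of degree $p^{n}$: the assignment $\chi\mapsto (k^{sep})^{\ker\chi}$ is $p^{n-1}(p-1)$-to-one, since replacing $[x]$ by $u\cdot[x]$ for $u$ prime to $p$ (equivalently, postcomposing $\chi_{x}$ with an automorphism of $\Z/p^{n}\Z$) changes the class but not the fixed field. So restricting your isomorphism to surjective characters yields a surjection onto the set of cyclic degree-$p^{n}$ extensions inside $k^{sep}$ whose fibres are the $(\Z/p^{n}\Z)^{\times}$-orbits; the precise bijection is between such extensions and cyclic subgroups of order $p^{n}$ of $\W_{n}(k)/\wp(\W_{n}(k))$, or equivalently between surjective characters and pairs $(L,\sigma)$ with $\sigma$ a chosen generator of $\Gal(L/k)$. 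The theorem as printed in the paper is informal on exactly this point, so this is a matter of sharpening the final bookkeeping (and of noting that classes of order less than $p^{n}$ give proper subextensions, which you do handle), not a flaw in the method.
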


%

Alternatively, any cyclic extension $L/k$ with Galois group $G\cong\Z/p^{n}\Z$ can be given by a system of equations 
$$
y_{i}^{p} - y_{i} = f_{i}(f_{0},\ldots,f_{i - 1};y_{0},\ldots,y_{i - 1}) \text{ for } 0\leq i\leq n - 1
$$
where $f_{0}\in k$ and each $f_{i}$ is a polynomial over $k$. This follows from Artin--Schreier theory and the fact that a cyclic $\Z/p^{n}\Z$-extension can be written as a tower of $\Z/p\Z$-extensions in a unique way.

\subsection{Ramification Data}
\label{sec:ram}

Suppose $k$ is a complete local field of characteristic $p$. Then $k\cong k_{0}((t))$ for an algebraically closed field $k_{0}$ and the Galois theory of $k$ can be described by filtering the Galois group $G = \Gal(L/k)$ of any separable extension according to liftings of the $t$-adic valuation to $L$. In particular, let $\orb_{k}$ be the valuation ring of $k$, or equivalently, the subring of $k$ corresponding to $k_{0}[[t]]$. It contains a prime ideal $\frak{p}_{k}$ corresponding to $(t)\subset k_{0}[[t]]$. For any separable extension $L/k$, let $\orb_{L}$ be the valuation ring of $L$, which can be defined as the integral closure of $\orb_{k}$ in $L$. The unique prime ideal lying over $\frak{p}_{k}$ will be denoted $\frak{P}_{L}$. 

The Galois group $G$ contains subgroups 
$$
I = \{\sigma\in G : \sigma(x)\equiv x\mod{\frak{P}_{L}} \text{ for all } x\in\orb_{L}\},
$$
called the {\it inertia group} of $L/k$, and 
$$
R = \left\{\sigma\in G : \frac{\sigma(x)}{x}\equiv 1\mod{\frak{P}_{L}} \text{ for all } x\in L^{\times}\right\},
$$
called the {\it ramification group}. These form the start of a filtration of the Galois group: $G\supseteq I\supseteq R$. For each $i\geq 0$, define 
$$
G_{i} = \{\sigma\in G : v_{L}(\sigma(x) - x)\geq i + 1 \text{ for all } x\in\orb_{L}\},
$$
where $v_{L}$ denotes the unique extension of the $t$-adic valuation to $L$. Then $G_{0} = I,G_{1} = R$ and we get a filtration of $G$ by normal subgroups: 
$$
G\supseteq G_{0}\supseteq G_{1}\supseteq G_{2}\supseteq\cdots
$$
This is called the {\it ramification filtration in the lower numbering} for $G$; it terminates in a finite number of steps. If $G_{m}\supsetneq G_{m + 1}$, $m$ is called a {\it jump} in the ramification filtration. It is known \cite[Ch.~IV]{ser2} (and see Proposition~\ref{prop:ramfilt} below) that $G_{0}$ is a semidirect product of the form $P\rtimes\Z/r\Z$, where $P$ is a $p$-group, say of order $p^{n}$, and $r$ is prime to $p$. Moreover, $G_{1}$ is the unique Sylow $p$-subgroup of $G_{0}$, so there are exactly $n$ jumps in the ramification filtration. 

A parallel filtration of $G$ can be defined as follows. Define a function $\varphi = \varphi_{L/k} : [0,\infty)\rightarrow [0,\infty)$ by 
$$
\varphi(i) = \frac{1}{|G_{0}|}(|G_{1}| + \ldots + |G_{m}| + (i - m)|G_{m + 1}|)
$$
for $m\in\Z$ with $m\leq i\leq m + 1$. (This is usually written as an integral; see {\it loc.~cit.}) Define the {\it ramification filtration in the upper numbering} for $G$ by 
$$
G\supseteq G^{0}\supseteq G^{1}\supseteq G^{2}\supseteq\cdots
$$
where $G^{j} = G_{i}$ for $j = \varphi(i)$. An easy formula for translating between the extensions is due to Herbrand \cite[Ch.~IV]{ser2}: if $m_{0} = u_{0} = 0$ and for $k\geq 1$, $m_{k}$ (resp.~$u_{k}$) are the ramification jumps in the lower (resp.~upper) numbering, then 
$$
u_{k} - u_{k - 1} = \frac{1}{p^{k - 1}r}(m_{k} - m_{k - 1}). 
$$

The filtration in the upper numbering is compatible with quotients of $G$ (subextensions of $L/k$), whereas the filtration in the lowering numbering is only compatible with subgroups of $G$. However, the jumps in the upper numbering need not be integers, though they are when $G$ is abelian \cite[Ch.~V, Sec.~7]{ser2}. 

Here we record some useful facts about the ramification filtrations of $G = \Gal(L/k)$. 

\begin{prop}[{\cite[Ch.~IV]{ser2}}, {\cite[Prop.~4.2]{op}}]
\label{prop:ramfilt}
For a Galois extension $L/k$ with group $G$, 
\begin{enumerate}[\quad (a)]
  \item $G_{0} \cong P\rtimes\Z/r\Z$ where $P$ is a finite $p$-group and $r$ is prime to $p$. 
  \item $G_{0}/G_{1}$ is cyclic of order $r$. 
  \item $G_{1}$ is the Sylow $p$-group of $G_{0}$. 
  \item For each $i\geq 1$, the quotient $G_{i}/G_{i + 1}$ is a direct product of cyclic groups of order $p$. 
  \item The jumps in the lower numbering are congruent mod $r$. 
  \item The jumps in the upper numbering are congruent mod $r$. 
\end{enumerate}
\end{prop}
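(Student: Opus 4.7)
The plan is to derive all six statements from the standard embeddings of the successive quotients $G_i/G_{i+1}$ into natural objects attached to the residue field, following the treatment in Serre's \emph{Corps Locaux}, Ch.~IV. The two essential inputs are: first, a $G_0$-equivariant injection $\theta : G_0/G_1 \hookrightarrow k_L^\times$ given on a uniformizer $\pi$ of $\orb_L$ by $\sigma \mapsto \overline{\sigma(\pi)/\pi}$ (independent of the choice of $\pi$); and second, for each $i \geq 1$, a $G_0$-equivariant injection $G_i/G_{i+1} \hookrightarrow \frak{P}_L^i/\frak{P}_L^{i+1}$ given by $\sigma \mapsto \overline{\sigma(\pi)/\pi - 1}$. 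In the second embedding, the target is a one-dimensional $k_L$-vector space, so each $G_i/G_{i+1}$ is elementary abelian $p$, giving (d). In the first, the target $k_0^\times$ is the multiplicative group of an algebraically closed field of characteristic $p$, whose finite subgroups are cyclic of order prime to $p$; this gives (b) with $r := |G_0/G_1|$.

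Combining these, $G_1$ is a tower of elementary abelian $p$-groups and hence a $p$-group, so it is the Sylow $p$-subgroup of $G_0$, proving (c). Since $\gcd(|G_1|,r) = 1$, the Schur--Zassenhaus theorem splits the short exact sequence $1 \to G_1 \to G_0 \to \Z/r\Z \to 1$ and yields $G_0 \cong G_1 \rtimes \Z/r\Z$, which is (a) with $P = G_1$.

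For (e), the plan is to analyse the conjugation action of $C := G_0/G_1$ on each quotient $G_i/G_{i+1}$. The $G_0$-equivariance of the second embedding, together with the fact that $C$ acts on $\frak{P}_L/\frak{P}_L^2$ through the tautological character $\theta$, forces the induced $C$-action on $G_i/G_{i+1}$ to factor through the character $\theta^i$ (this is essentially the commutator identity of Serre IV.2). Since $\gcd(|C|,p) = 1$, Maschke's theorem decomposes $G_1^{\mathrm{ab}}$ into $\F_p[C]$-isotypic components; the ramification filtration respects this decomposition, and a careful bookkeeping argument forces the nonzero jumps $G_{m_k}/G_{m_k+1}$ to all sit in a single isotypic component, say the one for $\theta^{m_1}$. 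Because $\theta$ has exact order $r$, the equality $\theta^{m_k} = \theta^{m_1}$ is precisely the congruence $m_k \equiv m_1 \pmod r$, giving (e). Part (f) then follows by feeding the relation $m_k - m_{k-1} \in r\Z$ into Herbrand's formula stated in the excerpt and tracking congruences through the rescaling by $p^{k-1}r$.

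The main obstacle will be part (e), specifically the compatibility between the $C$-isotypic decomposition of $G_1^{\mathrm{ab}}$ and the ramification filtration: this rests on the commutator calculus in Serre IV.2 and is the one place beyond formal bookkeeping. Everything else reduces cleanly to the two embeddings and Schur--Zassenhaus. Since this proposition is a classical consolidation of results from \emph{Corps Locaux} and appears essentially verbatim as Obus--Pries Prop.~4.2, I anticipate that the proof in the paper will be a brief pointer to these sources, perhaps with a reminder of the commutator formula underlying (e) and the Herbrand substitution for (f).
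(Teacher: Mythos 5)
The paper gives no proof of this proposition at all: the bracketed references to Serre, \emph{Corps Locaux}, Ch.~IV and to Obus--Pries, Prop.~4.2 \emph{are} the proof, exactly as you anticipated, so the comparison is really against the classical arguments you reconstruct. Your treatment of (a)--(d) is correct and is the standard one (the two embeddings $G_0/G_1\hookrightarrow k_L^\times$ and $G_i/G_{i+1}\hookrightarrow \frak{P}_L^i/\frak{P}_L^{i+1}$, then Schur--Zassenhaus). The genuine gap is in (e): the step you yourself flag, ``a careful bookkeeping argument forces the nonzero jumps to all sit in a single isotypic component,'' is not just unproven, it is false at the level of generality you are working in. For $G_0\cong(\Z/p\Z\times\Z/p\Z)\rtimes\Z/r\Z$ with the tame quotient acting on the two factors through distinct characters $\theta^a\neq\theta^b$, Serre's commutator formula (IV, \S 2, Prop.~9) places the two wild jumps in \emph{different} residue classes mod $r$; Maschke gives you the isotypic decomposition but nothing forces the filtration to live in one piece. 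Parts (e)--(f) are really statements about the cyclic-by-tame situation of Obus--Pries (which is the scope in which the paper uses them), and there the correct mechanism is cyclicity, not Maschke: an automorphism $x\mapsto ax$ of $\Z/p^n\Z$ induces multiplication by $a\bmod p$ on \emph{every} graded quotient, so $C=G_0/G_1$ acts on each jump quotient $G_{m_k}/G_{m_k+1}$ through one and the same character; Prop.~9 identifies that character with $\theta^{m_k}$, and injectivity of $\theta$ gives $m_k\equiv m_1\pmod r$.

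Part (f) also does not follow the way you claim. Feeding $m_k-m_{k-1}\in r\Z$ into the quoted Herbrand relation $u_k-u_{k-1}=(m_k-m_{k-1})/(p^{k-1}r)$ only tells you $u_k-u_{k-1}$ is some rational number; dividing a multiple of $r$ by $p^{k-1}r$ does not leave a multiple of $r$. The implication runs the other way: upper congruence gives lower congruence immediately (multiply by $p^{k-1}r$, or by $p^{k-1}$ if one works with the jumps of the wild subextension), whereas the upper congruence needs its own argument --- e.g.\ apply the single-character argument above to each quotient extension with group $\Z/p^k\Z\rtimes\Z/r\Z$, using the compatibility of the upper numbering with quotients, which is how Obus--Pries proceed. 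So the fix is: replace the Maschke bookkeeping in (e) by the cyclicity argument, prove the congruence for the upper jumps first (or for each quotient level), and then deduce (e) from (f) rather than conversely.
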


We now turn to cyclic $p^{n}$-extensions. By Theorem~\ref{thm:AWSextclassification}, any $\Z/p^{n}\Z$-extension $L/k$ is of the form $L = k(\wp^{-1}x)$ for some Witt vector $x = (x_{0},x_{1},\ldots,x_{n - 1})\in\W_{n}(k)$. Set $m_{i} = -v(x_{i})$ for $0\leq i\leq n - 1$. 

\begin{thm}
\label{thm:garjump}
The last jump in the ramification filtration in the upper numbering for $G = \Gal(L/k)$ is $u = \max\{p^{n - i}m_{i}\}_{i = 0}^{n - 1}$. 
\end{thm}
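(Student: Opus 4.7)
The plan is to reduce $x$ to a canonical form, set up the tower of Artin--Schreier subextensions, and then argue by induction on $n$, using Herbrand's formula to translate between the lower and upper numberings. First I would invoke the standard Artin--Schreier--Witt reduction: by replacing $x$ with $x + \wp(b)$ for a suitable $b \in \W_n(k)$, one may assume the Witt vector $(x_0, \ldots, x_{n-1})$ is in reduced form, so that the integers $m_i = -v(x_i)$ become intrinsic invariants of the extension and cannot be decreased by further modification. This typically requires $p \nmid m_i$ whenever $x_i \neq 0$, together with compatibility conditions across the coordinates, and ensures the formula is well-posed.

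Next I would set up the tower $k = L_0 \subset L_1 \subset \cdots \subset L_n = L$, where $L_j = k(\wp^{-1}(x_0, \ldots, x_{j-1}))$ is the cyclic subextension of degree $p^j$ corresponding to the Verschiebung filtration on $\W_n(k)$. Each step $L_j/L_{j-1}$ is a $\Z/p\Z$-extension in Artin--Schreier form, defined by an equation $y_{j-1}^{p} - y_{j-1} = x_{j-1} + P_{j-1}(x_0, \ldots, x_{j-2}; y_0, \ldots, y_{j-2})$ for a Witt polynomial correction $P_{j-1}$. The base case $n = 1$ is then classical Artin--Schreier theory. For the inductive step, the upper numbering is compatible with quotients by Herbrand's theorem, so the upper jumps of $\Gal(L_{n-1}/k)$ from the inductive hypothesis lift to upper jumps of $G$.

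The new contribution comes from the top Artin--Schreier step $L/L_{n-1}$, whose lower ramification jump, as a jump of the subgroup $H = \Gal(L/L_{n-1}) \subset G$, equals $-v_{L_{n-1}}$ of its defining element. Since $L_{n-1}/k$ is totally wildly ramified of degree $p^{n-1}$, the leading term $x_{n-1}$ has valuation $-p^{n-1} m_{n-1}$ in $L_{n-1}$, while one verifies that each term of the Witt polynomial correction $P_{n-1}$ has valuation bounded by the analogous $p^{n-1-i} m_i$ quantities coming from the lower steps. I would then apply Herbrand's function $\varphi_{L/k}$, using the inductively determined lower jumps of all earlier steps and the formula $u_k - u_{k-1} = (m_k - m_{k-1})/(p^{k-1}r)$ stated above, to translate this subgroup jump into an upper jump of $G$ and verify that the resulting quantity matches $\max_i p^{n-i} m_i$.

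The principal technical obstacle is controlling the Witt polynomial correction $P_{n-1}$: a priori it could cancel the dominant $x_{n-1}$ contribution. The reduced form chosen at the outset is precisely what rules this out, since any cancellation forcing a larger valuation would itself provide a valid further reduction, contradicting the normalization. A more conceptual route, which avoids this combinatorial bookkeeping, uses Garuti's geometric setup \cite{gar} developed in the next section: the pullback map $C \to \overline{\W}_{n}$ witnesses each $m_i$ as the vanishing order of the pullback of a distinguished divisor on $\overline{\W}_{n}$, and the displayed formula for $u$ can then be read off directly from the structure of the extended isogeny $\Psi_n$.
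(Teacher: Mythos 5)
The paper itself gives no argument here: its proof is a citation to \cite[Thm.~1.1]{gar} together with \cite[Sec.~5]{tho}, so your closing paragraph (reading the jump off Garuti's $\overline{\W}_{n}$ and the extended isogeny $\Psi_{n}$) is essentially the paper's route. Your main proposal --- the Schmid-type induction through the tower $k = L_{0}\subset\cdots\subset L_{n} = L$ with Herbrand's function --- is a genuinely different and in principle viable line of proof, and its structural skeleton is sound: upper numbering passes to quotients, the unique jump of the top layer $L/L_{n-1}$ is the last lower jump of $G$ because every nontrivial subgroup of $\Z/p^{n}\Z$ contains $\Gal(L/L_{n-1})$ and lower numbering is compatible with subgroups, and the conversion goes through $\varphi_{L/k} = \varphi_{L_{n-1}/k}\circ\varphi_{L/L_{n-1}}$.

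However, the crux is exactly the step you dispose of with ``one verifies'': determining $-v_{L_{n-1}}$ of the Artin--Schreier element defining $L/L_{n-1}$. Two things are missing. First, that element is only well defined modulo $\wp(L_{n-1})$, and to read off the jump you need it reduced \emph{over $L_{n-1}$} (pole order prime to $p$, no further reduction possible); reducedness of $(x_{0},\ldots,x_{n-1})$ is a condition modulo $\wp(\W_{n}(k))$, so your argument that ``any cancellation would provide a further reduction, contradicting the normalization'' does not apply --- the allowed modifications live in different groups, and an element reduced over $k$ can a priori still be reducible over the ramified field $L_{n-1}$. Second, the correction $P_{n-1}$ involves the ramified roots $y_{0},\ldots,y_{n-2}$ (of negative valuation), not just the $x_{i}$: already for $n = 2$ the top equation is $z^{p} - z = \bigl(x_{0}^{p} + y_{0}^{p} - (x_{0} + y_{0})^{p}\bigr)/p + x_{1}$, and one must compute the valuations of the mixed terms and, when several candidate values $p^{\cdot}m_{i}$ tie, rule out or handle cancellation of leading terms; this is precisely where the congruence constraints of Proposition~\ref{prop:ramfilt}(e)--(f) and the case analysis of \cite{tho} (or the explicit computations in \cite{op}) do real work. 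Finally, when you do carry out the bookkeeping, check the exponents against Example~\ref{ex:key}, where $(t^{-j},0)$ has upper jumps $j$ and $pj$: the computation yields $\max\{p^{n-1-i}m_{i}\}_{i=0}^{n-1}$ with the paper's indexing of the $m_{i}$, so the matching step at the end of your plan needs the conventions reconciled rather than simply asserted.
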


\begin{proof}
This follows from \cite[Thm.~1.1]{gar}. A proof using local class field theory can be found in \cite[Sec.~5]{tho}. 
\end{proof}

Therefore the ramification filtration (either in the upper or lower numbering) of a cyclic $\Z/p^{n}\Z$-extension of complete local fields can be determined completely by its Witt vector equation. For further reading, in the last section of \cite{op} the authors provide explicit equations describing $\Z/p^{3}\Z$-equations of $k((t))$.


\section{Artin--Schreier--Witt Root Stacks}
\label{sec:ASWroots}

Let $k$ be a field of characteristic $p > 0$ and let $L/k$ be Galois extension with Galois group $G = \Z/p^{n}\Z$ for some $n\geq 1$. By Theorem~\ref{thm:AWSextclassification}, such an extension is of the form 
$$
L = k[x]/(\wp x - a)
$$
where $x = (x_{0},\ldots,x_{n - 1})$ is an indeterminate taking values in the ring of length $n$ Witt vectors $\W_{n}(k)$ and $a\in\W_{n}(k)$ is not of the form $a = \wp b$ for any $b\in\W_{n}(k)$. For $n = 1$, $\wp$ is the just map $\alpha\mapsto\alpha^{p} - \alpha$, used in~\cite[Sec.~6]{kob} to construct the universal Artin--Schreier covers 
$$
\wp_{m} : [\P(1,m)/\G_{a}] \longrightarrow [\P(1,m)/\G_{a}]. 
$$
These were used to define Artin--Schreier root stacks, the wild $\Z/p\Z$ analogue of tame root stacks. See Section~\ref{sec:ASroots} for a brief review. 

To study higher order wild root stacks, we will replace the quotient stack $[\P(1,m)/\G_{a}]$ with $[\overline{\W}_{n}(1,m_{1},\ldots,m_{n})/\W_{n}]$, where $\overline{\W}_{n}(1,m_{1},\ldots,m_{n})$ is a new stacky equivariant compactification of $\W_{n}$ equal to $\P(1,m)$ in the $n = 1$ case. This stacky compactification is built on a compactification $\overline{\W}_{n}$ of $\W_{n}$ in the category of schemes, due to Garuti \cite{gar}, which we describe in Section~\ref{sec:garuti}.

\subsection{Artin--Schreier Root Stacks}
\label{sec:ASroots}

Fix a prime $p$ and an integer $m\geq 1$. As above, the universal Artin--Schreier cover for this pair $(p,m)$ is the morphism 
$$
\wp_{m} : [\P(1,m)/\G_{a}] \longrightarrow [\P(1,m)/\G_{a}]
$$
induced by the compatible maps $[u : v]\mapsto [u^{p} : v^{p} - vu^{m(p - 1)}]$ on $\P(1,m)$ and $\alpha\mapsto \alpha^{p} - \alpha$ on $\G_{a}$. Locally, points of $[\P(1,m)/\G_{a}]$ are triples $(L,s,f)$, where $L$ is a line bundle, $s$ is a section of $L$ and $f$ is a section of $L^{m}$, with disjoint zero sets. Pulling back along $\wp_{m}$ takes an Artin--Schreier root of the triple $(L,s,f)$ as follows. For a scheme $X$ and a triple $(L,s,f)$ corresponding to a map $X\rightarrow [\P(1,m)/\G_{a}]$, the {\it Artin--Schreier root} of $X$ over $\div(s)$ with jump $m$ is the normalized pullback 
\begin{center}
\begin{tikzpicture}[xscale=4,yscale=2]
  \node at (0,1) (a) {$\wp_{m}^{-1}((L,s,f)/X)$};
  \node at (1,1) (b) {$[\P(1,m)/\G_{a}]$};
  \node at (0,0) (c) {$X$};
  \node at (1,0) (d) {$[\P(1,m)/\G_{a}]$};
  \draw[->] (a) -- (b);
  \draw[->] (a) -- (c);
  \draw[->] (b) -- (d) node[right,pos=.5] {$\wp_{m}$};
  \draw[->] (c) -- (d);
  \node at (.15,.7) {$\nu$};
  \draw (.1,.6) -- (.2,.6) -- (.2,.8);
\end{tikzpicture}
\end{center}

Properties of this construction are summarized below; see \cite[Sec.~6]{kob}. 

\begin{prop}
Let $X$ be a scheme and $(L,s,f)$ be a triple on $X$ corresponding to a morphism $X\rightarrow[\P(1,m)/\G_{a}]$. Then 
\begin{enumerate}[\quad (a)]
  \item Artin--Schreier roots are functorial. That is, for any morphism $\varphi : Y\rightarrow X$, there is an isomorphism of stacks 
  $$
  \wp_{m}^{-1}(\varphi^{*}(L,s,f)/Y) \cong \wp_{m}^{-1}((L,s,f)/X)\times_{X}^{\nu}Y. 
  $$
  \item If $X$ is a scheme over a perfect field $k$, $\wp_{m}^{-1}((L,s,f)/X)$ is a Deligne--Mumford stack with coarse space $X$. 
  \item Locally in the \'{e}tale topology, $\wp_{m}^{-1}((L,s,f)/X)$ is isomorphic to a quotient of the form $[V/G]$ where $G = \Z/p\Z$ and $V$ is an Artin--Schreier cover of (an \'{e}tale neighborhood of) $X$. 
\end{enumerate}
\end{prop}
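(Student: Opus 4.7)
The plan is to establish part (c) directly, since it is the local structural statement, and then deduce both (a) and (b) from it together with standard properties of relative normalization.

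For part (c), I would work étale-locally on $X$. Choose a neighborhood $U\to X$ on which $L$ trivialises, so that $s,f$ correspond to regular functions $\bar s,\bar f\in\Gamma(U,\mathcal{O}_U)$ and the classifying map $U\to[\P(1,m)/\G_a]$ lifts along the $\G_a$-torsor $\P(1,m)\to[\P(1,m)/\G_a]$ to an explicit map $U\to\P(1,m)$ with weighted coordinates $[\bar s:\bar f]$. Pulling back the degree-$p$ morphism $[u:v]\mapsto[u^p:v^p-vu^{m(p-1)}]$ yields a finite flat degree-$p$ cover of $U$; on the affine chart where $\bar s\neq 0$, this cover is cut out by the Artin--Schreier equation $y^p-y=\bar f/\bar s^m$. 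Its relative normalization produces a scheme $V\to U$ carrying a canonical $\Z/p\Z$-action coming from $\ker(\wp)$, and Lemma~\ref{lem:CFGequiv} (together with Lemma~\ref{lem:doublequotient} to isolate the residual $\G_a$-quotient) identifies $\wp_m^{-1}((L,s,f)/U)$ with $[V/(\Z/p\Z)]$.

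Given (c), part (b) follows locally: the quotient $[V/(\Z/p\Z)]$ by the constant (and hence étale) group scheme $\Z/p\Z$ over a perfect field of characteristic $p$ is Deligne--Mumford, with finite reduced automorphism groups as in Remark~\ref{rem:DMfiniteaut}, and its coarse space is the scheme-theoretic quotient $V/(\Z/p\Z)=U$. Étale descent then globalises to yield that $\wp_m^{-1}((L,s,f)/X)$ is Deligne--Mumford with coarse space $X$. For part (a), I would appeal to the universal property encoded in Definition~\ref{defn:normalpullback}. The composite $Y\xrightarrow{\varphi} X\to[\P(1,m)/\G_a]$ classifies $\varphi^*(L,s,f)$, and the associated fibre product admits a natural decomposition
\[
Y\times_{[\P(1,m)/\G_a]}[\P(1,m)/\G_a]\;\cong\;Y\times_X\bigl(X\times_{[\P(1,m)/\G_a]}[\P(1,m)/\G_a]\bigr).
\]
Passing to relative normalizations on both sides and invoking the terminal characterisation of $\X^\nu$ yields the claimed isomorphism.

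The main obstacle is the verification in part (c): one must check that the naive fibre product is reduced enough for its relative normalization to produce precisely the Artin--Schreier extension with ramification jump $m$ predicted by the local equation $y^p-y=\bar f/\bar s^m$, and that the residual $\G_a$-action on the target $[\P(1,m)/\G_a]$ descends correctly through normalization so that the quotient stack presentation $[V/(\Z/p\Z)]$ is canonical and independent of the chosen trivialisation of $L$. These reduce to computations in weighted-projective charts, but require careful bookkeeping of both the $\Z/p\Z$-cover data and the $\G_a$-equivariance in order to glue across the étale cover.
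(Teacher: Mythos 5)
The paper gives no argument for this proposition at all: it is stated as a summary of the construction with the proof deferred to \cite[Sec.~6]{kob}, so there is nothing to compare step by step. Your sketch follows the route that the cited source and this paper's own Artin--Schreier--Witt analogues in Section~\ref{sec:mainthms} take (\'{e}tale-local trivialization and lifting, identification of the pullback of $\wp_{m}$ with a $\Z/p\Z$-quotient of an Artin--Schreier cover as in Example~\ref{ex:elemASW}, Deligne--Mumfordness from the fact that $\Z/p\Z$ is \'{e}tale, coarse space by descent, and base-change compatibility of the normalized pullback), so the strategy is the intended one and the outline is sound.

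Two steps are glossed more than they should be. First, Lemma~\ref{lem:doublequotient} is not the statement that identifies $U\times_{[\P(1,m)/\G_{a}],\,\wp_{m}}[\P(1,m)/\G_{a}]$ with a $\Z/p\Z$-quotient; what is needed is the torsor computation (as in Cadman's local description of tame root stacks \cite{cad}, or \cite[Prop.~6.14]{kob}) showing that once the classifying map is lifted to $h\colon U\rightarrow\P(1,m)$, an object of this fibre product amounts to a $\ker(\wp)\cong\Z/p\Z$-torsor mapping equivariantly to $U\times_{h,\Psi_{1}}\P(1,m)$; one also uses $p\nmid m$ so that $\Psi_{1}$ is injective on the $\mu_{m}$-stabilizer (making that fibre product a scheme), and normality of $U$ to conclude $V/(\Z/p\Z)=U$ for the coarse space claim in (b). Second, and more substantively, in (a) the displayed transitivity of fibre products is the trivial part; the real content is that relative normalization is compatible with base change along $\varphi$, i.e.\ that $(Y\times_{X}Z)^{\nu}\cong(Y\times_{X}Z^{\nu})^{\nu}$ for $Z=X\times_{[\P(1,m)/\G_{a}]}[\P(1,m)/\G_{a}]$. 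For \'{e}tale or smooth $\varphi$ this is built into the definition of relative normalization, but for arbitrary $\varphi$ --- for instance $Y$ a point landing in $\div(s)$ --- the terminal characterization only yields the comparison maps after one checks dominance on irreducible components in both directions; that verification is exactly the content of \cite[Lem.~6.11]{kob} (Lemma~\ref{lem:functorial} here) and should be carried out rather than waved through.
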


\subsection{Garuti's Compactification}
\label{sec:garuti}

For a vector bundle $E\rightarrow X$, let $\P(E)\rightarrow X$ denote the {\it projective bundle} associated to $E$, that is, $\P(E) = \Proj_{X}(\Sym(E^{\vee}))$. This comes equipped with a tautological bundle $\orb_{\P}(1)$. Set $\orb_{\P}(m) = \orb_{\P}(1)^{\otimes m}$ for any $m\in\Z$, where $\orb_{\P}(-1) = \orb_{\P}(1)^{\vee}$ by convention. 

Following \cite{gar}, we define a sequence of ringed spaces $(\overline{\W}_{n},\orb_{\overline{\W}_{n}}(1))$ inductively by 
\begin{align*}
  (\overline{\W}_{1},\orb_{\overline{\W}_{1}}(1)) &= (\P^{1},\orb_{\P^{1}}(1))\\
  \text{and}\quad (\overline{\W}_{n},\orb_{\overline{\W}_{n}}(1)) &= (\P(\orb_{\overline{\W}_{n - 1}}\oplus\orb_{\overline{\W}_{n - 1}}(p)),\orb_{\P}(1)) \quad\text{for } n\geq 2,
\end{align*}
where $\orb_{\P}(1)$ is the tautological bundle of the projective bundle in that step. There is a morphism 
$$
r : \overline{\W}_{n} \longrightarrow \overline{\W}_{n - 1}
$$
for all $n\geq 1$ exhibiting $\overline{\W}_{n}$ as a $\P^{1}$-bundle over $\overline{\W}_{n - 1}$. Note that $r_{*}\orb_{\overline{\W}_{n}}(1) = \orb_{\overline{\W}_{n - 1}}\oplus\orb_{\overline{\W}_{n - 1}}(p)$. For each $n\geq 2$, there is a canonical section of $r$ corresponding to the zero section of the bundle $\P(\orb_{\overline{\W}_{n - 1}}\oplus\orb_{\overline{\W}_{n - 1}}(p))$ over $\overline{\W}_{n - 1}$. Let $Z_{n}$ be the divisor associated to the zero locus of this section. On the other hand, the isomorphism 
$$
\P(\orb_{\overline{\W}_{n - 1}}\oplus\orb_{\overline{\W}_{n - 1}}(p)) \cong \P(\orb_{\overline{\W}_{n - 1}}(-p)\oplus\orb_{\overline{\W}_{n - 1}})
$$
induces another section of $r$, called the ``infinity section'', whose divisor (aka zero locus) we denote by $\Sigma_{n}$. 

\begin{prop}[{\cite[Prop.~2.4]{gar}}]
\label{prop:boundarydivisorsWn}
There is a system of open immersions $j_{n} : \W_{n}\hookrightarrow\overline{\W}_{n}$ such that $j_{n}(\W_{n}) = \overline{\W}_{n}\smallsetminus B_{n}$ where $B_{n}$ is the zero locus of a section of $\orb_{\overline{\W}_{n}}(1)$, given by 
$$
B_{1} = \Sigma_{1} \quad\text{and}\quad B_{n} = \Sigma_{n} + pr^{*}B_{n - 1} \text{ for } n\geq 2. 
$$
\end{prop}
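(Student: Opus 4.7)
The plan is to proceed by induction on $n$, constructing the open immersions $j_n$ compatibly with the truncation maps $\pi_n : \W_n \to \W_{n-1}$ and simultaneously identifying $B_n$ as the zero locus of an explicit global section of $\orb_{\overline{\W}_n}(1)$. For the base case $n = 1$, I would take $j_1 : \W_1 = \A^1 \hookrightarrow \P^1 = \overline{\W}_1$ to be the standard open immersion; its complement, the point at infinity, serves as $\Sigma_1 = B_1$ and is cut out by any nonzero linear form $s_1 \in H^0(\P^1, \orb_{\P^1}(1))$.

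For the inductive step, suppose $j_{n-1}$ has been constructed with complement $B_{n-1}$ cut out by a section $s_{n-1} \in H^0(\overline{\W}_{n-1}, \orb_{\overline{\W}_{n-1}}(1))$. Since $s_{n-1}$ is nowhere vanishing on $\W_{n-1}$, the $p$th power $s_{n-1}^p$ trivializes $\orb_{\overline{\W}_{n-1}}(p)|_{\W_{n-1}}$, so the $\P^1$-bundle $r : \overline{\W}_n = \P(\orb \oplus \orb(p)) \to \overline{\W}_{n-1}$ becomes trivial over $\W_{n-1}$. I would then define $j_n : \W_n \hookrightarrow \overline{\W}_n$ by identifying $\W_n \cong \A^1 \times \W_{n-1}$ (via truncation) with the complement of the infinity section $\Sigma_n$ inside this trivialized $\P^1$-bundle, which makes the compatibility $r \circ j_n = j_{n-1} \circ \pi_n$ automatic. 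To exhibit $B_n$ as a zero locus, I would use the adjunction $H^0(\overline{\W}_n, \orb_{\overline{\W}_n}(1)) \cong H^0(\overline{\W}_{n-1}, \orb \oplus \orb(p))$ coming from $r_* \orb_{\overline{\W}_n}(1) = \orb \oplus \orb(p)$, and take $s_n$ to be the section corresponding to the pair $(0, s_{n-1}^p)$. In local homogeneous fibre coordinates $[x : y]$ adapted to the direct sum decomposition, $s_n$ is represented by $y \cdot s_{n-1}^p$, so its zero divisor on $\overline{\W}_n$ equals $\{y = 0\} + r^*\operatorname{div}(s_{n-1}^p) = \Sigma_n + p\,r^* B_{n-1}$, yielding the claimed recursion.

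The main obstacle I anticipate is verifying that the scheme-theoretic identification of the complement of $\Sigma_n$ in the trivialized $\P^1$-bundle with $\W_n$ is compatible with the Witt vector ring structure, so that the open immersions form a genuine \emph{system} indexed by truncation rather than an ad hoc family of isomorphisms of affine spaces. Unpacking this requires matching Witt vector addition with the affine-bundle structure obtained from $s_{n-1}^p$, and ensuring that the two natural sections of $r$ selected above indeed correspond to the zero and infinity sections as defined in \cite{gar}. This compatibility is the substantive content of Garuti's original construction, and once it is granted, the rest of the argument amounts to the line-bundle bookkeeping carried out above.
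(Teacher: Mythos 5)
Your argument is correct, but note that the paper does not prove this statement at all: it is quoted verbatim from Garuti (\cite[Prop.~2.4]{gar}), and your induction is essentially a reconstruction of Garuti's original argument rather than an alternative to anything in the present article. The key bookkeeping step checks out: since $s_{n-1}$ trivializes $\orb_{\overline{\W}_{n-1}}(1)$ over $\W_{n-1}$, the section of $\orb_{\overline{\W}_{n}}(1)$ corresponding to $(0,s_{n-1}^{p})\in H^{0}(\orb_{\overline{\W}_{n-1}})\oplus H^{0}(\orb_{\overline{\W}_{n-1}}(p))$ is nonvanishing exactly on your copy of $\W_{n}\cong\A^{1}\times\W_{n-1}$, and its divisor is the distinguished section cut out by the $\orb(p)$-summand plus $p\,r^{*}B_{n-1}$; that this distinguished section is $\Sigma_{n}$ (and not $Z_{n}$) is consistent with Lemma~\ref{lem:Wngraded}(3), where $Z_{n}$ is the effective part of $(Y_{n-1})$ while $B_{n}$ is the effective part of $(T^{p^{n-1}})=((T^{p^{n-2}})^{p})$, i.e.\ exactly the $p$-th power of the section cutting $B_{n-1}$, which is your choice. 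Your flagged worry about matching Witt-vector addition with the affine-bundle structure is not actually needed for this proposition, which only asserts the existence of a compatible system of open immersions with complement $B_{n}$; the group-theoretic compatibility becomes relevant only for the subsequent statements quoted from \cite{gar} (extension of the translation action and of $\wp$ to $\overline{\W}_{n}$), for which one must use Garuti's specific identification of the fibre coordinate with the last Witt component rather than an arbitrary trivialization.
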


\begin{cor}[{\cite[Cor.~2.5]{gar}}]
For all $n\geq 2$, 
$$
B_{n} = \sum_{i = 1}^{n} p^{n - i}(r^{n - i})^{*}\Sigma_{i}. 
$$
\end{cor}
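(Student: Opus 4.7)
The plan is to prove this by induction on $n$, directly from the recursion in Proposition~\ref{prop:boundarydivisorsWn}. The base case $n = 2$ is immediate: expanding gives $B_{2} = \Sigma_{2} + pr^{*}B_{1} = \Sigma_{2} + pr^{*}\Sigma_{1}$, which is exactly $\sum_{i = 1}^{2}p^{2 - i}(r^{2 - i})^{*}\Sigma_{i}$ once one reads the $i = 2$ term as $p^{0}(r^{0})^{*}\Sigma_{2} = \Sigma_{2}$.

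For the inductive step, assume the formula holds at stage $n - 1$. Substituting the inductive hypothesis into the recursion $B_{n} = \Sigma_{n} + pr^{*}B_{n - 1}$ yields
$$B_{n} = \Sigma_{n} + pr^{*}\left(\sum_{i = 1}^{n - 1}p^{n - 1 - i}(r^{n - 1 - i})^{*}\Sigma_{i}\right) = \Sigma_{n} + \sum_{i = 1}^{n - 1}p^{n - i}(r^{n - i})^{*}\Sigma_{i},$$
where we have pulled the integer coefficients through $r^{*}$ and composed pullbacks along the tower of $\mathbb{P}^{1}$-bundles $r : \overline{\W}_{n}\rightarrow\overline{\W}_{n - 1}$. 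Absorbing the leading $\Sigma_{n}$ as the $i = n$ summand (with $p^{0} = 1$ and $(r^{0})^{*}\Sigma_{n} = \Sigma_{n}$) then gives the claimed identity.

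There is essentially no obstacle here: the only points worth checking are that $r^{*}$ is additive on Cartier divisors and commutes with multiplication by integers (both hold because each $r$ is a flat $\mathbb{P}^{1}$-bundle and each $\Sigma_{i}$ is Cartier), and that the iterated pullback $(r^{n - i})^{*}$ unambiguously agrees with $r^{*}\circ (r^{n - 1 - i})^{*}$ after reindexing. Both are formal consequences of the construction of the tower $\overline{\W}_{n}\rightarrow\overline{\W}_{n - 1}\rightarrow\cdots\rightarrow\overline{\W}_{1}$.
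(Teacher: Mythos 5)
Your induction is correct and is exactly the intended argument: the paper gives no independent proof (it simply cites Garuti), and the formula is precisely the unwinding of the recursion $B_{1} = \Sigma_{1}$, $B_{n} = \Sigma_{n} + pr^{*}B_{n-1}$ from Proposition~\ref{prop:boundarydivisorsWn}, just as you do. The bookkeeping points you flag (additivity of $r^{*}$ on Cartier divisors and $(r^{n-i})^{*} = r^{*}\circ(r^{n-1-i})^{*}$ along the tower of $\P^{1}$-bundles) are indeed the only things to check, and they hold as you say.
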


We next observe that $\overline{\W}_{n}$ is a compactification of $\W_{n}$ which is equivariant with respect to the action of $\W_{n}$ on itself. 

\begin{lem}[{\cite[Lem.~2.7]{gar}}]
\label{lem:Wngraded}
Let $\orb_{\overline{\W}_{n}}(1)$ be the tautological bundle on $\overline{\W}_{n}$. Then 
\begin{enumerate}[\quad (1)]
  \item $\orb_{\overline{\W}_{n}}(1)$ is generated by global sections. 
  \item For any $m\geq 0$, there is an isomorphism of rings 
  $$
  H^{0}(\overline{\W}_{n},\orb_{\overline{\W}_{n}}(m)) \xrightarrow{\;\sim\;} \Sym^{m}(H_{p^{n - 1}})
  $$
  where $H_{d}$ denotes the $d$th graded piece of the graded ring 
  $$
  H = \F_{p}[t,y_{0},y_{1},\ldots,]. 
  $$
  \item Under this isomorphism, $Y_{n - 1}$ and $T^{p^{n - 1}}$ define principal divisors 
  $$
  (Y_{n - 1}) = \sum a_{P}P \qquad\text{and}\qquad (T^{p^{n - 1}}) = \sum b_{P}P
  $$
  such that $\sum_{a_{P}\geq 0} a_{P}P = Z_{n}$ and $\sum_{b_{P}\geq 0} b_{P}P = B_{n}$. 
\end{enumerate}
\end{lem}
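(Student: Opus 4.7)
The plan is to prove all three parts by induction on $n$, exploiting the projective bundle structure $r \colon \overline{\W}_n \to \overline{\W}_{n-1}$ together with the projection formula. The base case $n = 1$ is classical: $\overline{\W}_1 = \P^1$, the tautological bundle $\orb_{\P^1}(1)$ is generated by the sections $t$ and $y_0$, and $H^0(\P^1, \orb(m)) = \F_p[t, y_0]_m = \Sym^m(H_1)$ where $H_1 = \F_p \langle t, y_0 \rangle$. Part (3) is vacuous for $n = 1$.

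For the inductive step, since $\overline{\W}_n = \P(\orb_{\overline{\W}_{n-1}} \oplus \orb_{\overline{\W}_{n-1}}(p))$ with $\orb_{\overline{\W}_n}(1)$ the tautological bundle, the standard pushforward formula for projective bundles combined with the decomposition $\Sym^m(A \oplus B) = \bigoplus_i \Sym^i A \otimes \Sym^{m-i} B$ gives
\[
r_{*}\orb_{\overline{\W}_n}(m) = \Sym^m\bigl(\orb_{\overline{\W}_{n-1}} \oplus \orb_{\overline{\W}_{n-1}}(p)\bigr) = \bigoplus_{i = 0}^{m} \orb_{\overline{\W}_{n-1}}(ip).
\]
Part (1) follows at once: by induction each summand $\orb_{\overline{\W}_{n-1}}(ip)$ is globally generated, and these generators lift through $r^{*}$ to generators of $\orb_{\overline{\W}_n}(1)$. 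For part (2), taking global sections and applying the inductive hypothesis yields
\[
H^{0}\bigl(\overline{\W}_n, \orb_{\overline{\W}_n}(m)\bigr) = \bigoplus_{i = 0}^{m} H^{0}\bigl(\overline{\W}_{n-1}, \orb(ip)\bigr) = \bigoplus_{i = 0}^{m} \Sym^{ip}\bigl(H_{p^{n-2}}\bigr).
\]
To identify this direct sum with $\Sym^m(H_{p^{n-1}})$, observe that the vector space $H_{p^{n-1}}$ decomposes as the degree-$p^{n-1}$ polynomials in $t, y_0, \ldots, y_{n-2}$ (which is $\Sym^p(H_{p^{n-2}})$) together with one new generator $y_{n-1}$ of degree $p^{n-1}$; then an $m$-fold symmetric product splits according to how many factors of $y_{n-1}$ appear, with the index $i$ in the sum above tracking the power of the complementary $\Sym^p(H_{p^{n-2}})$-factor. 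Ring-compatibility follows from naturality of the projection formula applied to the multiplication $\orb(m) \otimes \orb(m') \to \orb(m+m')$.

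For part (3), the zero section of the bundle $r$ corresponds to the subbundle $\orb_{\overline{\W}_{n-1}}(p) \hookrightarrow r_{*}\orb_{\overline{\W}_n}(1)$, which under the identification of part (2) is generated by the new variable $y_{n-1}$; hence $Z_n$ is the vanishing locus of $Y_{n-1}$ as a section of $\orb_{\overline{\W}_n}(1)$. For $T^{p^{n-1}}$, expressed via the projection formula it factors as $(T^{p^{n-2}})^{p}$ in the subring coming from $r^{*}$, so by induction its vanishing locus picks up $p \cdot r^{*}B_{n-1}$ from the pullback; the additional contribution along the new $\P^1$-direction is the infinity section $\Sigma_n$, giving the recursion $B_n = \Sigma_n + p r^{*} B_{n-1}$ of Proposition~\ref{prop:boundarydivisorsWn}.

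The main obstacle will be step (2): matching the ring multiplication on the section side (tensor product of global sections) with the symmetric-algebra multiplication on the polynomial side, so that products involving the new generator $y_{n-1}$ correspond correctly to the summand decomposition of the pushforward. This is essentially a bookkeeping exercise, requiring one to track a Veronese-type embedding induced by the projective bundle structure and verify that the natural isomorphism of the projection formula intertwines the two multiplication laws.
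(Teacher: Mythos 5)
The paper itself does not prove this lemma --- it is quoted from Garuti \cite[Lem.~2.7]{gar} --- so your induction along the $\P^{1}$-bundle tower is in effect being compared with Garuti's own argument, and its skeleton is the right one: the pushforward $r_{*}\orb_{\overline{\W}_{n}}(m) = \Sym^{m}(\orb\oplus\orb(p)) = \bigoplus_{i=0}^{m}\orb_{\overline{\W}_{n-1}}(ip)$, global generation from global generation of $r_{*}\orb(1)$, and an inductive identification of sections. Part (1) is fine, and part (3) is essentially fine except that you have the summands swapped: under $H^{0}(\overline{\W}_{n},\orb(1)) = H^{0}(\overline{\W}_{n-1},\orb)\oplus H^{0}(\overline{\W}_{n-1},\orb(p))$ the new variable $Y_{n-1}$ spans the one-dimensional piece coming from the \emph{trivial} summand $\orb_{\overline{\W}_{n-1}}$, while the degree-$p^{n-1}$ monomials in the old variables fill out $H^{0}(\orb(p))$; the divisor bookkeeping still goes through after this correction.

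The genuine gap is in step (2), precisely at the point you dismiss as bookkeeping. If $\Sym^{m}(H_{p^{n-1}})$ is read as the abstract symmetric power of the vector space $H_{p^{n-1}}$, the identifications you use are false on dimension grounds: already for $p=2$, $n=2$, $m=2$, your own pushforward formula gives $h^{0}(\overline{\W}_{2},\orb(2)) = 1+3+5 = 9$, whereas $\dim\Sym^{2}(H_{2}) = \binom{5}{2} = 10$ (the element $(t^{2})\cdot(y_{0}^{2})-(ty_{0})\cdot(ty_{0})$ is killed by multiplication). Concretely, your two key claims --- that the degree-$p^{n-1}$ part of $\F_{p}[t,y_{0},\ldots,y_{n-2}]$ ``is'' $\Sym^{p}(H_{p^{n-2}})$, and that after splitting off powers of $y_{n-1}$ one may replace $\Sym^{m-j}(\Sym^{p}(H_{p^{n-2}}))$ by $\Sym^{(m-j)p}(H_{p^{n-2}})$ --- hold only for the \emph{images} of the multiplication maps inside $H$, never as abstract symmetric powers: the map $\Sym^{a}(\Sym^{p}V)\rightarrow\Sym^{ap}V$ is surjective but not injective. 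The statement must therefore be read, as in Garuti, with $\Sym^{m}(H_{p^{n-1}})$ meaning the $m$-th graded piece of the subring of $H$ generated by $H_{p^{n-1}}$, i.e.\ $H_{mp^{n-1}}$ for the grading $\deg t = \deg y_{0} = 1$, $\deg y_{i} = p^{i}$. With that reading the induction does close up: one matches $H_{mp^{n-1}} = \bigoplus_{j=0}^{m} y_{n-1}^{j}\,H'_{(m-j)p^{n-1}}$ (where $H' = \F_{p}[t,y_{0},\ldots,y_{n-2}]$) summand-by-summand with $\bigoplus_{i=0}^{m}H^{0}(\overline{\W}_{n-1},\orb(ip))$, and one must also verify the surjectivity statement that every monomial of degree $kp^{n-2}$ in the old variables is a product of $k$ monomials of degree $p^{n-2}$, so that the subring generated by $H_{p^{n-1}}$ really has $H_{mp^{n-1}}$ as its $m$-th piece and the multiplication laws agree. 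As written, your step (2) asserts a dimensionally impossible isomorphism, so the deferred ``bookkeeping'' is exactly where the content of the lemma --- identifying the relations among global sections with the Veronese relations in $H$ --- actually lives.
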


This allows us to construct the action of $\W_{n}$ on $\overline{\W}_{n}$. 

\begin{prop}
The action of $\W_{n}$ on itself by Witt-vector translation extends to an action on $\overline{\W}_{n}$ which stabilizes $\orb_{\overline{\W}_{n}}(1)$. 
\end{prop}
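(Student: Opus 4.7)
The plan is to argue by induction on $n$, using the $\P^1$-bundle structure $r : \overline{\W}_n \to \overline{\W}_{n-1}$ to extend the action and the graded-ring description of $\orb_{\overline{\W}_n}(m)$ from Lemma~\ref{lem:Wngraded} to verify the stabilization claim. The base case $n = 1$ is standard: the translation action of $\G_a$ on itself extends to $\P^1 = \overline{\W}_1$ fixing $\infty$, with $[u : v] \mapsto [u + av : v]$ in homogeneous coordinates, which is a grading-preserving automorphism of $\F_p[u, v]$ and therefore stabilizes $\orb_{\P^1}(1)$.

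For the inductive step, fix $a = (a_0, \ldots, a_{n-1}) \in \W_n$ and let $\bar a = (a_0, \ldots, a_{n-2}) \in \W_{n-1}$ be its truncation. By the hypothesis, $\bar a$ acts on $\overline{\W}_{n-1}$ while stabilizing $\orb_{\overline{\W}_{n-1}}(1)$; I would construct the action $\tau_a : \overline{\W}_n \to \overline{\W}_n$ as a lift of $\tau_{\bar a}$ along $r$. The key algebraic input is that the Witt addition polynomial $S_{n-1}(X_0, \ldots, X_{n-2}; A_0, \ldots, A_{n-2})$ computing the final coordinate of $x + a$ is weighted homogeneous of degree $p^{n-1}$ when $X_j$ and $A_j$ are assigned weight $p^j$. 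This weighting matches the grading convention of $H = \F_p[t, y_0, y_1, \ldots]$ from Lemma~\ref{lem:Wngraded}(2), with $\deg t = 1$ and $\deg y_j = p^j$, and it permits $S_{n-1}$ to be interpreted as a global section of $\orb_{\overline{\W}_{n-1}}(p^{n-1})$ rather than merely a polynomial on the open stratum $\W_{n-1}$. One then defines $\tau_a$ on the fiber $\P(\orb \oplus \orb(p))$ of $r$ by the affine-linear bundle automorphism $[u : v] \mapsto [u + (a_{n-1} + S_{n-1})\, v : v]$, which preserves the tautological bundle fiberwise; combined with the inductive stabilization of $\orb_{\overline{\W}_{n-1}}(1)$ on the base, this globally stabilizes $\orb_{\overline{\W}_n}(1)$.

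Three routine verifications remain: on the open stratum $\W_n \subset \overline{\W}_n$, the map $\tau_a$ recovers Witt translation; the identity $\tau_a \circ \tau_b = \tau_{a + b}$ follows from associativity of Witt addition at the level of the universal polynomials $S_i$; and the resulting $\W_n$-linearization is compatible with $r$. The main obstacle I anticipate is the bookkeeping in the construction above, namely carefully globalizing $S_{n-1}$ from a polynomial identity on $\W_{n-1}$ to a section of the correct tensor power of $\orb_{\overline{\W}_{n-1}}(1)$, matching the divisorial description of the boundary given in Lemma~\ref{lem:Wngraded}(3). A cleaner alternative would be to define the action directly at the level of $H$ via the grading-preserving substitution $t \mapsto t$ and $y_i \mapsto y_i + a_i t^{p^i} + S_i(y_0/t,\ldots,y_{i-1}/t^{p^{i-1}};\, a_0,\ldots,a_{i-1})\, t^{p^i}$, which is polynomial by weighted homogeneity; this renders stabilization of $\orb_{\overline{\W}_n}(1)$ automatic, at the cost of unpacking the $\Proj$ construction attached to $H$.
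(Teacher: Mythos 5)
Your proposal is correct and follows essentially the same route as the paper, whose own proof is just a sketch of this: the explicit $n=1$ case plus an induction along the $\P^{1}$-bundles $r : \overline{\W}_{n}\rightarrow\overline{\W}_{n-1}$, with the inductive step delegated to \cite[Prop.~2.8]{gar}; your write-up fills in that induction via the isobaric property of the Witt addition polynomials, which is exactly the intended mechanism. One indexing slip to fix: under the normalization of Lemma~\ref{lem:Wngraded}, global sections of $\orb_{\overline{\W}_{n-1}}(1)$ correspond to weighted-degree-$p^{n-2}$ elements of $H$, so the homogenization of $S_{n-1}$ (weighted degree $p^{n-1}$) is a global section of $\orb_{\overline{\W}_{n-1}}(p)$, not of $\orb_{\overline{\W}_{n-1}}(p^{n-1})$ -- and this is precisely the twist needed for $[u:v]\mapsto[u+(a_{n-1}+S_{n-1})v:v]$ to be a well-defined bundle automorphism of $\P(\orb_{\overline{\W}_{n-1}}\oplus\orb_{\overline{\W}_{n-1}}(p))$, whereas the twist $p^{n-1}$ would make the formula ill-defined once $n\geq 3$. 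With that correction, your argument (an automorphism of $E_{n}$ covering the inductively constructed action on the base, which stabilizes $\orb_{\overline{\W}_{n-1}}(1)$ and hence $E_{n}$, preserves the tautological sheaf) gives the claim, and the cocycle identity $\tau_{a}\circ\tau_{b}=\tau_{a+b}$ follows from the universal Witt addition identities as you say.
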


\begin{proof}
(Sketch) For $n = 1$, the translation action of $\W_{1} =\G_{a}$ on itself by $\lambda\cdot x = x + \lambda$ extends to an action on $\P^{1} = \overline{\W}_{1}$ by $\lambda\cdot [x,y] = [x + \lambda y,y]$. Since this fixes $\infty = [1 : 0]$, the action stabilizes $\orb(1) = \orb(1\cdot\infty)$. The general case is proved by induction \cite[Prop.~2.8]{gar}. 
\end{proof}

\begin{prop}
The isogeny $\wp : \W_{n}\rightarrow\W_{n}$ extends to a cyclic cover of degree $p^{n}$, 
$$
\Psi_{n} : \overline{\W}_{n} \longrightarrow \overline{\W}_{n}
$$
which is defined over $\F_{p}$, commutes with the maps $r : \overline{\W}_{n}\rightarrow\overline{\W}_{n - 1}$ and has branch locus $B_{n}$, with $\Psi_{n}^{*}B_{n} = pB_{n}$. 
\end{prop}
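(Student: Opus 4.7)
The plan is to proceed by induction on $n$, building $\Psi_n$ from $\Psi_{n-1}$ using the $\P^1$-bundle structure $r : \overline{\W}_n \to \overline{\W}_{n-1}$.

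For the base case $n = 1$, the Artin--Schreier isogeny $\wp : \G_a \to \G_a,\ x \mapsto x^p - x$, extends to the explicit degree-$p$ self-map
$$
\Psi_1 : \P^1 \longrightarrow \P^1, \qquad [u : v] \longmapsto [u^p : v^p - vu^{p-1}],
$$
which is manifestly defined over $\F_p$. A direct local computation at $\infty = [1 : 0]$ shows the branch locus is $\Sigma_1 = B_1$ with $\Psi_1^* \Sigma_1 = p\Sigma_1$.

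For the inductive step, suppose $\Psi_{n-1}$ has been constructed with all claimed properties. On the open piece $\W_n \subset \overline{\W}_n$, the Witt-vector arithmetic gives $\wp(a_0, \ldots, a_{n-1}) = (\wp(a_0, \ldots, a_{n-2}),\ a_{n-1}^p - a_{n-1} + P_n(a_0, \ldots, a_{n-2}))$ for a universal polynomial $P_n$ over $\F_p$, so on $\W_n$ the map $\wp$ commutes with $r$ and covers its image under $\Psi_{n-1}$. Using the description $\overline{\W}_n = \P(\orb_{\overline{\W}_{n-1}} \oplus \orb_{\overline{\W}_{n-1}}(p))$, I would define $\Psi_n$ fibrewise by the homogenized formula $[u : v] \mapsto [u^p : v^p - vu^{p-1} + P_n \cdot u^p]$, which on each $\P^1$-fiber is a degree-$p$ self-map of Artin--Schreier type. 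Globalizing across fibers amounts to interpreting the correction term $P_n$ as an element of $H^0(\overline{\W}_{n-1}, \orb_{\overline{\W}_{n-1}}(p))$; Lemma~\ref{lem:Wngraded} together with the fact that this is exactly the $\orb(p)$-summand whose appearance was built into the definition of $\overline{\W}_n$ ensures such a section exists with the right divisor data. This yields $\Psi_n$ covering $\Psi_{n-1}$ along $r$.

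It remains to verify the stated properties. The degree is $p^n$ by multiplicativity along the bundle ($p$ in each fiber times $p^{n-1}$ on the base). Definedness over $\F_p$ is inherited from $\Psi_{n-1}$ and the $\F_p$-rationality of $P_n$ and the fibrewise formula. For the branch locus, $\Psi_n^* \Sigma_n = p\Sigma_n$ follows fibrewise from the base case, and combining this with the recursion $B_n = \Sigma_n + p r^* B_{n-1}$ from Proposition~\ref{prop:boundarydivisorsWn}, the commutation $\Psi_n^* r^* = r^* \Psi_{n-1}^*$, and the inductive hypothesis yields
$$
\Psi_n^* B_n = \Psi_n^* \Sigma_n + p\, r^* \Psi_{n-1}^* B_{n-1} = p\Sigma_n + p \cdot r^*(p B_{n-1}) = p(\Sigma_n + p r^* B_{n-1}) = p B_n.
$$
The main obstacle is the globalization step: one must verify that the local Artin--Schreier-like fibre formula glues correctly against the transition cocycle of the $\P^1$-bundle, which reduces to checking that $P_n$ genuinely defines a section of $\orb_{\overline{\W}_{n-1}}(p)$. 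This is where the precise weight $p$ in the bundle definition is essential, and it is the part of the argument that must be extracted from the universal Witt-vector identities rather than from formal bundle manipulations.
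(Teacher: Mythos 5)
Your overall inductive scheme (extend fibrewise by an Artin--Schreier-type map covering $\Psi_{n-1}$, then get $\Psi_n^{*}B_n = pB_n$ from $\Psi_n^{*}\Sigma_n = p\Sigma_n$, the recursion $B_n = \Sigma_n + pr^{*}B_{n-1}$ of Proposition~\ref{prop:boundarydivisorsWn}, and commutation with $r$) is the right shape and matches the paper's strategy. But the step you yourself flag as the main obstacle --- that the naive fibre formula glues because $P_n$ is a section of $\orb_{\overline{\W}_{n-1}}(p)$ --- is where there is a genuine gap, and the assertion as stated is false. Already for $n = 2$ the Witt correction term is $P_2(a_0) = \tfrac{1}{p}\bigl(a_0^{p^2} - a_0^{p} - (a_0^{p}-a_0)^{p}\bigr)$ (for $p = 2$ it is $a_0^{3} + a_0^{2}$), a polynomial of degree $p^{2} - p + 1 > p$ in the affine coordinate; its pole along $B_1$ therefore exceeds what a section of $\orb_{\P^1}(p)$ allows, and it only extends to a section of $\orb(p^{2}) \cong \Psi_1^{*}\orb(p)$. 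The same weight bookkeeping breaks the cross term: since $v$ transforms by the cocycle of $\orb_{\overline{\W}_{n-1}}(p)$ and the target fibre coordinate (pulled back through $\Psi_{n-1}$) transforms by $\Psi_{n-1}^{*}\orb(p) \cong \orb(p^{2})$, the monomial $vu^{p-1}$ is off by a twist of $\orb(p^{2}-p)$ and must be multiplied by a suitable power of the canonical section cutting out $B_{n-1}$ --- the bundle analogue of the factor $u^{m(p-1)}$ in the $\P(1,m)$ model. So the displayed formula $[u:v]\mapsto[u^{p}: v^{p} - vu^{p-1} + P_n u^{p}]$ is not invariant under the transition cocycle and does not define a morphism as written.

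The fix is essentially the route the paper (following Garuti) takes, which sidesteps the cocycle gymnastics: form the fibre product $P$ of $r : \overline{\W}_{n}\rightarrow\overline{\W}_{n-1}$ with $\Psi_{n-1}$, identify it as $\P(\orb_{\overline{\W}_{n-1}}\oplus\orb_{\overline{\W}_{n-1}}(p^{2}))$ precisely because $\Psi_{n-1}^{*}B_{n-1} = pB_{n-1}$, and then construct a finite flat morphism $\varphi : \overline{\W}_{n}\rightarrow P$ \emph{over} $\overline{\W}_{n-1}$ using the graded section ring of Lemma~\ref{lem:Wngraded}, where the correct twists (including the boundary-section factor and the fact that the correction term lives in $\orb(p^{2})$) are built in; $\Psi_n$ is then the composite of $\varphi$ with the projection $P\rightarrow\overline{\W}_{n}$. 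Your computation of $\Psi_n^{*}B_n = pB_n$ and the degree count then go through, but as it stands your proposal asserts rather than proves the existence of the global extension, and the specific line-bundle claim it rests on is incorrect. (You also never address why the cover is \emph{cyclic} of degree $p^{n}$; this comes from the translation action of $\wp^{-1}(0)\cong\Z/p^{n}\Z$ extending to $\overline{\W}_{n}$, which is the preceding proposition.)
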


\begin{proof}
(Sketch) The $n = 1$ case is well-known and is also outlined in~\cite[Sec.~6]{kob}. To induct, consider the fibre product 
\begin{center}
\begin{tikzpicture}[xscale=2.5,yscale=2]
  \node at (0,1) (a) {$P$};
  \node at (1,1) (b) {$\overline{\W}_{n + 1}$};
  \node at (0,0) (c) {$\overline{\W}_{n}$};
  \node at (1,0) (d) {$\overline{\W}_{n}$};
  \draw[->] (a) -- (b) node[above,pos=.5] {$\pi$};
  \draw[->] (a) -- (c) node[left,pos=.5] {$q$};
  \draw[->] (b) -- (d) node[right,pos=.5] {$r$};
  \draw[->] (c) -- (d) node[above,pos=.5] {$\Psi_{n}$};
\end{tikzpicture}
\end{center}
Then $\pi : P\rightarrow\overline{\W}_{n + 1}$ is a cyclic $p^{n}$-cover given explicitly by 
$$
P = \P(\orb_{\overline{\W}_{n}},\orb_{\overline{\W}_{n}}(p^{2}))
$$
since $\Psi_{n}^{*}B_{n} = pB_{n}$ and $\orb_{\P}(B_{n}) = \orb_{\overline{\W}_{n}}(p)$. Using Lemma~\ref{lem:Wngraded}, it is possible to construct a finite, flat morphism 
$$
\varphi : \overline{\W}_{n + 1} \longrightarrow P
$$
over $\overline{\W}_{n}$. 
This defines $\Psi_{n + 1}$ as the composition 
$$
\Psi_{n + 1} : \overline{\W}_{n + 1} \xrightarrow{\varphi} P \xrightarrow{\pi} \overline{\W}_{n + 1} 
$$
which is then finite, flat and extends $\wp : \W_{n + 1}\rightarrow\W_{n + 1}$ by construction. It is easy to check (cf.~\cite[Prop.~2.9]{gar}) that the $\Psi_{n}$ and $r$ commute. Finally, (3) of Lemma~\ref{lem:Wngraded} tells us that $B_{n + 1}$ is the effective part of the principal divisor $(t^{p^{n}})$, so $\Psi_{n + 1}^{*}B_{n} = pB_{n + 1}$ follows from the fact that $\Psi_{1}^{*}(t) = (t^{p})$. 
\end{proof}

\begin{rem}
By construction, $\overline{\W}_{2}$ can be identified with the Hirzebruch surface $F_{p}$. More generally, the sequence of $\overline{\W}_{n}$ form a {\it Bott tower} \cite{gk}. In particular, each $\overline{\W}_{n}$ is a smooth, projective toric variety \cite{cs}. 
\end{rem}

\subsection{Artin--Schreier--Witt Root Stacks}
\label{sec:AWSrootstacks}

Next we turn to the construction of the stacky compactification $\overline{\W}_{n}(1,m_{1},\ldots,m_{n})$ of the Witt scheme $\W_{n}$ for $n > 1$. We begin by setting $\overline{\W}_{1}(1,m) := \P(1,m)$, our stacky compactification of $\W_{1} = \A^{1}$. The key insight for generalizing this is to use the fact \cite[Lem.~6.3]{kob} that $\P(1,m)$ is itself a root stack over $\P^{1}$: 
\begin{center}
\begin{tikzpicture}[xscale=3.5,yscale=2]
  \node at (0,1) (a) {$\P(1,m)$};
  \node at (1,1) (b) {$[\A^{1}/\G_{m}]$};
  \node at (0,0) (c) {$\P^{1}$};
  \node at (1,0) (d) {$[\A^{1}/\G_{m}]$};
  \draw[->] (a) -- (b);
  \draw[->] (a) -- (c);
  \draw[->] (b) -- (d) node[right,pos=.5] {$m$};
  \draw[->] (c) -- (d) node[above,pos=.5] {$(\orb_{\P^{1}}(1),\Sigma_{1})$};
\end{tikzpicture}
\end{center}
Pulling back $\P(1,m) = \overline{\W}_{1}(1,m)$ along the sequence 
$$
\cdots\rightarrow \overline{\W}_{3}\xrightarrow{r} \overline{\W}_{2}\xrightarrow{r} \overline{\W}_{1} = \P^{1}
$$
defines $\overline{\W}_{n}(1,m,1,\ldots,1)$ for each $n > 1$. Each of these is a root stack over $\overline{\W}_{n}$ with stacky structure at (the pullback of) $\Sigma_{1}$; for example, $\overline{\W}_{2}(1,m,1) = r^{*}\overline{\W}_{1}(1,m)$ is a root stack over $\overline{\W}_{2}$: 
\begin{center}
\begin{tikzpicture}[xscale=4,yscale=2]
  \node at (0,1) (a) {$\overline{\W}_{2}(1,m,1)$};
  \node at (1,1) (b) {$[\A^{1}/\G_{m}]$};
  \node at (0,0) (c) {$\overline{\W}_{2}$};
  \node at (1,0) (d) {$[\A^{1}/\G_{m}]$};
  \draw[->] (a) -- (b);
  \draw[->] (a) -- (c);
  \draw[->] (b) -- (d) node[right,pos=.5] {$m$};
  \draw[->] (c) -- (d) node[above,pos=.5] {$(r^{*}\orb_{\P^{1}}(1),r^{*}\Sigma_{1})$};
\end{tikzpicture}
\end{center}
For a pair of positive integers $(m_{1},m_{2})$, the compactification $\overline{\W}_{2}(1,m_{1},m_{2})$ of $\W_{2}$ is defined by a second root stack, $\overline{\W}_{2}(1,m_{1},m_{2}) := \sqrt[m_{2}]{(\orb(1),\Sigma_{2})/\overline{\W}_{2}(1,m_{1},1)}$: 
\begin{center}
\begin{tikzpicture}[xscale=3.8,yscale=2]
  \node at (0,1) (a) {$\overline{\W}_{2}(1,m_{1},m_{2})$};
  \node at (1,1) (b) {$[\A^{1}/\G_{m}]$};
  \node at (0,0) (c) {$\overline{\W}_{2}(1,m_{1},1)$};
  \node at (1,0) (d) {$[\A^{1}/\G_{m}]$};
  \draw[->] (a) -- (b);
  \draw[->] (a) -- (c);
  \draw[->] (b) -- (d) node[right,pos=.5] {$m_{2}$};
  \draw[->] (c) -- (d) node[above,pos=.5] {$(\orb(1),\Sigma_{2})$};
\end{tikzpicture}
\end{center}
Here, $\orb(1)$ denotes the pullback of the line bundle $\orb_{\overline{\W}_{2}}(1)$ to $\overline{\W}_{2}(1,m_{1},1)$ along the coarse map. Now we proceed inductively. Let $n\geq 2$. 

\begin{defn}
For a sequence of positive integers $(m_{1},\ldots,m_{n})$, define the {\bf compactified Witt stack} $\overline{\W}_{n}(1,m_{1},\ldots,m_{n})$ to be the root stack $\sqrt[m_{n}]{(\orb(1),\Sigma_{n})/\overline{\W}_{n}(1,m_{1},\ldots,m_{n - 1},1)}$: 
\begin{center}
\begin{tikzpicture}[xscale=5,yscale=2]
  \node at (0,1) (a) {$\overline{\W}_{n}(1,m_{1},\ldots,m_{n})$};
  \node at (1,1) (b) {$[\A^{1}/\G_{m}]$};
  \node at (0,0) (c) {$\overline{\W}_{n}(1,m_{1},\ldots,m_{n - 1},1)$};
  \node at (1,0) (d) {$[\A^{1}/\G_{m}]$};
  \draw[->] (a) -- (b);
  \draw[->] (a) -- (c);
  \draw[->] (b) -- (d) node[right,pos=.5] {$m_{n}$};
  \draw[->] (c) -- (d) node[above,pos=.5] {$(\orb(1),\Sigma_{n})$};
\end{tikzpicture}
\end{center}
where $\overline{\W}_{n}(1,m_{1},\ldots,m_{n - 1},1) = r^{*}\overline{\W}_{n - 1}(1,m_{1},\ldots,m_{n - 1})$ is the pullback along $r$ of the compactified Witt stack $\overline{\W}_{n - 1}(1,m_{1},\ldots,m_{n - 1})$ over $\overline{\W}_{n - 1}$, and $\orb(1)$ is pulled back inductively as explained above. 
\end{defn}

We will continue to abuse notation by writing $r$ for the natural projections 
$$
\overline{\W}_{n}(1,m_{1},\ldots,m_{n})\rightarrow\overline{\W}_{n - 1}(1,m_{1},\ldots,m_{n - 1}).
$$

\begin{prop}
\label{prop:univASWcoverstack}
For each $n\geq 1$, the cyclic $p^{n}$-cover $\Psi_{n} : \overline{\W}_{n}\rightarrow\overline{\W}_{n}$ extends to a morphism of stacks 
$$
\Psi = \Psi_{m_{1},\ldots,m_{n}} : \overline{\W}(m_{1},\ldots,m_{n})\rightarrow\overline{\W}(m_{1},\ldots,m_{n})
$$
which commutes with $r : \overline{\W}(m_{1},\ldots,m_{n})\rightarrow\overline{\W}(m_{1},\ldots,m_{n - 1})$ and satisfies $\Psi^{*}B_{n} = pB_{n}$. 
\end{prop}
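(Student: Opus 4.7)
The plan is to induct on $n$. The base case $n=1$ is established in \cite[Sec.~6]{kob}: the Artin--Schreier map $\wp_{m}:\P(1,m)\to\P(1,m)$ given by $[u:v]\mapsto [u^{p}:v^{p}-vu^{m(p-1)}]$ extends $\Psi_{1}:\P^{1}\to\P^{1}$ to $\overline{\W}_{1}(1,m)=\P(1,m)$, commutes with the identity $r$, and satisfies $\wp_{m}^{*}B_{1}=pB_{1}$ since $B_{1}=\Sigma_{1}$ is the (stacky) point at infinity pulled back with multiplicity $p$. For the inductive step, I would assume that $\Psi=\Psi_{m_{1},\ldots,m_{n-1}}$ has already been constructed on $\overline{\W}_{n-1}(1,m_{1},\ldots,m_{n-1})$, commutes with the structure map down to $\overline{\W}_{n-1}$, and satisfies $\Psi^{*}B_{n-1}=pB_{n-1}$.

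The construction of $\Psi$ on $\overline{\W}_{n}(1,m_{1},\ldots,m_{n})$ then proceeds in two stages, mirroring the two-step definition of the stack. First, on the pullback $\overline{\W}_{n}(1,m_{1},\ldots,m_{n-1},1)=\overline{\W}_{n}\times_{\overline{\W}_{n-1}}\overline{\W}_{n-1}(1,m_{1},\ldots,m_{n-1})$, the identity $r\circ\Psi_{n}=\Psi_{n-1}\circ r$ from Garuti's proposition together with the inductive compatibility of $\Psi$ with the projection to $\overline{\W}_{n-1}$ means that $\Psi_{n}\times\Psi$ induces a well-defined morphism of the fibre product to itself. Commutation with $r$ is automatic from this fibre-product description. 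Second, to pass to the full stack $\overline{\W}_{n}(1,m_{1},\ldots,m_{n})=\sqrt[m_{n}]{(\orb(1),\Sigma_{n})/\overline{\W}_{n}(1,m_{1},\ldots,m_{n-1},1)}$, I would invoke the universal property of root stacks: a morphism into a root stack $\sqrt[m_{n}]{(\orb(1),\sigma_{n})}$ is the data of a morphism to the base together with an $m_{n}$-th root of the pulled-back pair. Let $(N,\tau)$ denote the tautological $m_{n}$-th root on $\overline{\W}_{n}(1,m_{1},\ldots,m_{n})$; then the composite $\overline{\W}_{n}(1,m_{1},\ldots,m_{n})\to\overline{\W}_{n}(1,m_{1},\ldots,m_{n-1},1)\xrightarrow{\Psi}\overline{\W}_{n}(1,m_{1},\ldots,m_{n-1},1)$ pulls $(\orb(1),\sigma_{n})$ back to $(\orb(p),\sigma_{n}^{p})$, and $(N^{p},\tau^{p})$ is a canonical $m_{n}$-th root of this pair. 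Hence $\Psi$ lifts through the root stack, and commutation with $r$ is preserved.

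Finally, the divisor identity $\Psi^{*}B_{n}=pB_{n}$ follows by induction using the decomposition $B_{n}=\Sigma_{n}+pr^{*}B_{n-1}$ of Proposition~\ref{prop:boundarydivisorsWn}: the commutation with $r$ gives $\Psi^{*}r^{*}B_{n-1}=r^{*}\Psi^{*}B_{n-1}=pr^{*}B_{n-1}$, so $\Psi^{*}B_{n}=\Psi^{*}\Sigma_{n}+p^{2}r^{*}B_{n-1}$, which matches $p B_{n}=p\Sigma_{n}+p^{2}r^{*}B_{n-1}$ exactly when $\Psi^{*}\Sigma_{n}=p\Sigma_{n}$. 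I expect this last identity to be the main obstacle, since it is the one point where the scheme-theoretic behavior of $\Psi_{n}$ on the infinity section (rather than on the full boundary $B_{n}$) must be pinned down. The cleanest route is to extract it from the explicit description in the inductive construction of $\Psi_{n}$ via the fibre product with $P=\P(\orb_{\overline{\W}_{n}}\oplus\orb_{\overline{\W}_{n}}(p^{2}))$, using item~(3) of Lemma~\ref{lem:Wngraded} to identify $\Sigma_{n}$ with the effective part of a principal divisor whose $\Psi_{n}$-pullback picks up a factor of $p$ in the same way $(t)$ does under $\Psi_{1}$.
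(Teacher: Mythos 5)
Your proposal is correct and follows essentially the same route as the paper: induct on $n$, take the base case $\wp_{m}$ on $\P(1,m)$ from \cite[Sec.~6]{kob}, extend over $\overline{\W}_{n}(1,m_{1},\ldots,m_{n-1},1)$ via the pullback along $r$, lift through the $m_{n}$-th root stack using compatibility of root stacks with pullback (your tautological-root-to-the-$p$ argument is exactly what that compatibility provides), and conclude with the same computation $\Psi^{*}B_{n}=\Psi^{*}\Sigma_{n}+pr^{*}\Psi^{*}B_{n-1}=pB_{n}$. Your explicit flagging of $\Psi^{*}\Sigma_{n}=p\Sigma_{n}$ and its derivation from Lemma~\ref{lem:Wngraded}(3) makes precise a step the paper passes over with ``by induction,'' but it is the same argument.
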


\begin{proof}
For $n = 1$, $\Psi_{1} : \P^{1}\rightarrow\P^{1}$ is the extension of $\wp(x) = x^{p} - x$ from $\A^{1}$ to $\P^{1}$. As explained in \cite[Sec.~6]{kob}, this extends naturally to $\overline{\W}_{1}(1,m) = \P(1,m)$ as $[x,y]\mapsto [x^{p},y^{p} - yx^{m(p - 1)}]$. Then by construction $\Psi^{*}\Sigma_{1} = p\Sigma_{1}$. To induct, suppose $\Psi : \overline{\W}_{n - 1}(1,m_{1},\ldots,m_{n - 1})\rightarrow\overline{\W}_{n - 1}(1,m_{1},\ldots,m_{n - 1})$ has been constructed. Then pulling back along $r$ extends $\Psi$ to a cover $\overline{\W}_{n}(1,m_{1},\ldots,m_{n - 1},1)\rightarrow\overline{\W}_{n}(1,m_{1},\ldots,m_{n - 1},1)$. Since the root stack construction commutes with pullback (cf.~\cite[Rem.~2.2.3]{cad} or~\cite[Lem.~5.10]{kob}), this induces a morphism $\Psi : \overline{\W}_{n}(1,m_{1},\ldots,m_{n})\rightarrow\overline{\W}_{n}(1,m_{1},\ldots,m_{n})$. By construction this commutes with $r : \overline{\W}_{n}(1,m_{1},\ldots,m_{n})\rightarrow\overline{\W}_{n - 1}(1,m_{1},\ldots,m_{n - 1})$ and we can compute 
\begin{align*}
  \Psi^{*}B_{n} &= \Psi^{*}(\Sigma_{n} + pr^{*}B_{n - 1}) \quad\text{by Proposition~\ref{prop:boundarydivisorsWn}}\\
    &= \Psi^{*}\Sigma_{n} + pr^{*}(\Psi^{*}B_{n - 1}) \quad\text{since $\Psi$ and $r$ commute}\\
    &= p\Sigma_{n} + pr^{*}(pB_{n - 1}) \quad\text{by induction}\\
    &= p(\Sigma_{n} + pr^{*}B_{n - 1}) = pB_{n}. 
\end{align*}
\end{proof}

Next, we prove a generalization of \cite[Prop.~6.4]{kob} that describes the $T$-points of the stack $\overline{\W}_{n}(1,m_{1},\ldots,m_{n})$ for any scheme $T$ in terms of line bundles on $T$ and their sections. First, we reinterpret $\overline{\W}_{n}(1,m_{1},\ldots,m_{n})$ as a quotient stack. 

\begin{lem}
\label{lem:Wittstackquotient}
For each $n\geq 2$ and any sequence of positive integers $(m_{1},\ldots,m_{n})$, the compactified Witt stack is a quotient stack: 
$$
\overline{\W}_{n}(1,m_{1},\ldots,m_{n}) = [V_{n}\smallsetminus\{0\}/\G_{m}]
$$
where $V_{n}$ is the total space of a rank $2$ vector bundle $E_{n}$ on $\overline{\W}_{n - 1}(1,m_{1},\ldots,m_{n - 1})$ and $\G_{m}$ acts on $V_{n}$ with weights $(1,m_{n})$. 
\end{lem}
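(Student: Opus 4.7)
The plan is to unwind the recursive definition of $\overline{\W}_n(1,m_1,\ldots,m_n)$ and then identify an $m_n$-th root stack of a projective bundle with a weighted projective bundle. Throughout, set $Y := \overline{\W}_{n-1}(1,m_1,\ldots,m_{n-1})$.

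First, by definition $\overline{\W}_n(1,m_1,\ldots,m_{n-1},1) = r^*\overline{\W}_{n-1}(1,m_1,\ldots,m_{n-1})$ is obtained by pulling the $\P^1$-bundle $r : \overline{\W}_n \to \overline{\W}_{n-1}$ back along the coarse map $Y \to \overline{\W}_{n-1}$. Because $\overline{\W}_n = \P(\orb_{\overline{\W}_{n-1}} \oplus \orb_{\overline{\W}_{n-1}}(p))$ and the formation of projective bundles commutes with base change, this pullback equals $\P(E_n)$ for the rank-$2$ bundle $E_n := \orb_Y \oplus \orb_Y(p)$ obtained by pulling $\orb \oplus \orb(p)$ back to $Y$. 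Next I would invoke the standard presentation of a projective bundle as a quotient stack, $\P(E_n) = [(V_n \smallsetminus 0)/\G_m]$ where $V_n = \mathrm{tot}(E_n)$ and $\G_m$ acts fiberwise with weights $(1,1)$. Under this presentation, $\orb_{\P(E_n)}(1)$ corresponds to the weight-$1$ character of $\G_m$, and $\Sigma_n$ is cut out by the tautological section of $\orb(1)$ coming from the $\orb_Y$ summand of $E_n$.

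It then remains to show
$$\sqrt[m_n]{(\orb(1), \Sigma_n)\Big/[(V_n\smallsetminus 0)/\G_m]_{(1,1)}} \;\cong\; [(V_n\smallsetminus 0)/\G_m]_{(1,m_n)}.$$
I would verify this by a direct functor-of-points comparison, appealing to Lemma~\ref{lem:CFGequiv}. A $T$-point of the left-hand side consists of a map $f : T \to Y$, a line bundle $M$ on $T$ with sections $s_1 \in \Gamma(M)$ and $s_2 \in \Gamma(M \otimes f^*\orb_Y(-p))$ (not simultaneously vanishing), together with a triple $(N, t, \phi)$ where $N$ is a line bundle on $T$, $t \in \Gamma(N)$, and $\phi : N^{\otimes m_n} \xrightarrow{\sim} M$ sends $t^{m_n}$ to $s_1$. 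Using $\phi$ to eliminate $M = N^{\otimes m_n}$ and $s_1 = t^{m_n}$ leaves the data $(f, N, t, s_2)$ with $t \in \Gamma(N)$ and $s_2 \in \Gamma(N^{\otimes m_n} \otimes f^*\orb_Y(-p))$, not simultaneously zero. This is exactly the functor of points of $[(V_n \smallsetminus 0)/\G_m]$ for the action with weights $(1, m_n)$. Naturality in $T$ produces the asserted equivalence of categories fibered in groupoids.

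The principal obstacle is the root-stack-to-weighted-quotient identification in the third step. The universal-property calculation itself is routine once set up, but two bookkeeping points require care: (i) correctly identifying which summand of $E_n$ corresponds to $\Sigma_n$ (rather than $Z_n$), so that the weight $m_n$ is attached to the correct factor of $V_n$, and (ii) confirming that the section of $\orb(1)$ used in the root-stack construction agrees with the one whose zero locus is $\Sigma_n$, which is where Proposition~\ref{prop:boundarydivisorsWn} and the description of $\Sigma_n$ via the infinity section enter. As a sanity check, the $n=1$ case recovers the known identification $\P(1, m) = [(\A^2 \smallsetminus 0)/\G_m]_{(1,m)}$ from \cite[Lem.~6.3]{kob}, and passing to an open $U \subset Y$ over which $\orb_Y(p)$ is trivial reduces the general statement locally to $\P(1, m_n) \times U$, consistent with both presentations.
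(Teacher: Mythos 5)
Your proposal is correct and follows essentially the same route as the paper's (sketch) proof: present the unweighted case $\P(E_n)$ as $[(V_n\smallsetminus\{0\})/\G_m]$ and then identify the $m_n$-th root along $\Sigma_n$ with the weighted version of this quotient, i.e.\ the weighted relative $\Proj$ with weights $(1,m_n)$. Your functor-of-points verification of the root-stack-to-weighted-quotient step just makes explicit what the paper leaves implicit (and your flagged twist/summand bookkeeping only affects which harmless twist of $E_n$ one writes down, not the conclusion).
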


\begin{proof}
(Sketch) First consider the case when $m_{1} = \cdots = m_{n} = 1$, that is $\overline{\W}_{n}(1,1,\ldots,1) = \overline{\W}_{n}$. By definition, $\overline{\W}_{n} = \P(E_{n})$ where $E_{n} = \orb_{\overline{\W}_{n - 1}}\oplus\orb_{\overline{\W}_{n - 1}}(p)$. For any vector bundle $E$ on $X$, the projective bundle $\P(E)$ can be presented as a quotient stack  
$$
\P(E) = [V\smallsetminus\{0\}/\G_{m}]
$$
where $V = \Sym(E^{*})$ is the total space of $E$ and $\G_{m}$ acts on $V$ by scalar multiplication. This finishes the description in the unweighted case. 

For the general case, the rank $2$ vector bundle is 
$$
E_{n} = \orb_{\overline{\W}_{n - 1}(1,m_{1},\ldots,m_{n - 1})}\oplus\orb_{\overline{\W}_{n - 1}(1,m_{1},\ldots,m_{n - 1})}(p)
$$
with total space $V_{n}$, and $\P(E_{n})$ is replaced by a \emph{weighted} relative $\Proj$, with weights $(1,m_{n})$. In this case the weighted relative $\Proj$ is identified with the quotient $\P(E_{n}) = [V_{n}\smallsetminus\{0\}/\G_{m}]$ where $\G_{m}$ acts on $V_{n}$ with weights $(1,m_{n})$. 
\end{proof}

Recall \cite[Prop.~6.4]{kob} that for each $m\geq 1$, a morphism into the weighted projective stack $\P(1,m)$ is equivalent to the data of a triple $(L,s,f)$ consisting of a line bundle $L$, a section $s$ of $L$ and another section $f$ of $L^{\otimes m}$ such that $s$ and $f$ do not vanish simultaneously. 

For two weights $m,n\geq 1$, consider the compactified Witt stack $\overline{\W}_{2}(1,m,n)$. To generalize \cite[Prop.~6.4]{kob}, let $\DIV^{[1,m,n]}$ be the category fibred in groupoids whose objects are tuples $(T,L,s,f,g)$, where $T$ is a scheme, $L$ is a line bundle on $T$, $s\in H^{0}(T,L)$, $f\in H^{0}(T,L^{m})$ not vanishing simultaneously with $s$, and $g\in H^{0}(T,L^{n})$, also not vanishing simultaneously with $s$. Morphisms are compatible morphisms of line bundles taking sections to sections. Then $\DIV^{[1,m,n]} \cong \overline{\W}_{2}(1,m,n)$, as explained below. 

Starting with the case $m = n = 1$, we have $\overline{\W}_{2} = \P(E_{2})$, where $E_{2} = \orb_{\P^{1}}\oplus\orb_{\P^{1}}(p)$ by definition. By the universal property of this Proj bundle, a morphism $T\rightarrow\overline{\W}_{2}$ is determined by a map $T\xrightarrow{\varphi}\P^{1}$ (hence a triple $(L,s,f)$) and a subbundle $L\subseteq\varphi^{*}E_{2}$, which in turn determines a section $g = \varphi^{*}s_{0}$ of $L$. In other words, $g$ is determined by the divisor $\Sigma_{2}$ in $\overline{\W}_{2}$. 

For any weights $m,n\geq 1$, \cite[Prop.~6.4]{kob} shows that $\P(1,m)$ can be identified with $\DIV^{[1,m]}$, the stack of tuples $(T,L,s,f)$. Thus there is a forgetful morphism $\DIV^{[1,m,n]}\rightarrow\DIV^{[1,m]}\cong\P(1,m)$. Pulling things back to $\overline{\W}_{2}$ along $r$, we see that $\overline{\W}_{2}(1,m,1)$ can similarly be identified with $\DIV^{[1,m,1]}$ -- with the second section coming from $\Sigma_{2}$ as above. Finally, $\overline{\W}_{2}(1,m,n)$ is defined as a root stack over $\overline{\W}_{2}(1,m,1)$ along this divisor $\Sigma_{2}$, with weight $n$. Then \cite[Prop.~5.3]{kob} allows us to identify $\overline{\W}_{2}(1,m,n)$ with $\DIV^{[1,m,n]}$. Explicitly, for $\varphi : T\rightarrow\overline{\W}_{2}(1,m,n)$, the tuple $(L,s,f,g)$ is given by $L = \varphi^{*}\orb(1)$, $s = \varphi^{*}r^{*}[0 : 1]$, $f = \varphi^{*}r^{*}[1 : 0]$ and $g = \varphi^{*}\Sigma_{2}$. This is summarized in the following commutative diagram, in which the square is cartesian. 
\begin{center}
\begin{tikzpicture}[xscale=2.6,yscale=2]
  \node at (0,0) (a) {$\overline{\W}_{2}$};
  \node at (1,0) (b) {$\P^{1}$};
  \node at (0,1) (c) {$\overline{\W}_{2}(1,m,1)$};
  \node at (-.9,1.05) {$\DIV^{[1,m,1]} \cong$};
  \node at (1,1) (d) {$\P(1,m)$};
  \node at (1.65,1.05) {$\cong \DIV^{[1,m]}$};
  \node at (0,2) (e) {$\overline{\W}_{2}(1,m,n)$};
  \node at (-.9,2.05) {$\DIV^{[1,m,n]} \cong$};
  \draw[->] (d) -- (b);
  \draw[->] (a) -- (b) node[above,pos=.5] {$r$};
  \draw[->] (c) -- (a);
  \draw[->] (c) -- (d) node[above,pos=.5] {$r$};
  \draw[->] (e) -- (c);
\end{tikzpicture}
\end{center}

Now we turn to the general case. For a sequence of positive integers $m_{1},\ldots,m_{n}$ and a scheme $T$, let $\DIV^{[1,m_{1},\ldots,m_{n}]}(T)$ be the category whose objects are tuples $(L,s,f_{1},\ldots,f_{n})$ with $L$ a line bundle on $T$, $s\in H^{0}(T,L)$ and $f_{i}\in H^{0}(T,L^{m_{i}})$ for each $1\leq i\leq n$ that don't vanish simultaneously with $s$. Morphisms $(L,s,f_{1},\ldots,f_{n})\rightarrow (L',s',f_{1}',\ldots,f_{n}')$ in $\DIV^{[1,m_{1},\ldots,m_{n}]}(T)$ are given by bundle isomorphisms $\varphi : L\rightarrow L'$ taking $s\mapsto s'$ and $f_{i}\mapsto f_{i}'$. Then $\DIV^{[1,m_{1},\ldots,m_{n}]}$ is a category fibred in groupoids over $\cat{Sch}_{k}$. The proof in the $n = 2$ case above generalizes easily to show: 

\begin{prop}
\label{prop:ASWlinebundlesectionclassification}
For any $m_{1},\ldots,m_{n}\geq 1$, there is an isomorphism of categories fibred in groupoids 
$$
\DIV^{[1,m_{1},\ldots,m_{n}]} \cong \overline{\W}_{n}(1,m_{1},\ldots,m_{n}). 
$$
\end{prop}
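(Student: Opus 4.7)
The plan is to induct on $n$, using Lemma~\ref{lem:CFGequiv} to reduce to producing an equivalence of groupoids $\overline{\W}_n(1, m_1, \ldots, m_n)(T) \simeq \DIV^{[1, m_1, \ldots, m_n]}(T)$ for each scheme $T$, naturally in $T$. The base case $n = 1$ is exactly \cite[Prop.~6.4]{kob}, which identifies $\P(1, m_1) = \overline{\W}_1(1, m_1)$ with $\DIV^{[1, m_1]}$.

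For the inductive step I would mirror the two-step construction of $\overline{\W}_n(1, m_1, \ldots, m_n)$. In the first step, $\overline{\W}_n(1, m_1, \ldots, m_{n-1}, 1) = r^* \overline{\W}_{n-1}(1, m_1, \ldots, m_{n-1})$. Combining the inductive hypothesis with the universal property of the fibre product, a $T$-point of this intermediate stack consists of a tuple $(L, s, f_1, \ldots, f_{n-1})$ in $\DIV^{[1, m_1, \ldots, m_{n-1}]}(T)$ together with a lift to $\overline{\W}_n$ over $\overline{\W}_{n-1}$. Since $\overline{\W}_n = \P(\orb_{\overline{\W}_{n-1}} \oplus \orb_{\overline{\W}_{n-1}}(p))$, the universal property of the projective bundle identifies such a lift with a choice of line subbundle of the pulled-back rank-two bundle, which in turn is encoded by a section cutting out the pullback of $\Sigma_n$. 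Tracing the identifications yields a tuple in $\DIV^{[1, m_1, \ldots, m_{n-1}, 1]}(T)$, exactly as in the $n = 2$ case spelled out in the excerpt.

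The second step applies \cite[Prop.~5.3]{kob} to the root stack $\overline{\W}_n(1, m_1, \ldots, m_n) = \sqrt[m_n]{(\orb(1), \Sigma_n)/\overline{\W}_n(1, m_1, \ldots, m_{n-1}, 1)}$. A $T$-point is then a $T$-point of the base together with an $m_n$th root of the pair $(\orb(1), \text{section for } \Sigma_n)$; under the identification of Lemma~\ref{lem:Wittstackquotient} this root introduces the final line bundle $L$ with $L^{m_n}$ recovering the previous $\orb(1)$, and promotes the section coming from $\Sigma_n$ to a section $f_n \in H^0(T, L^{m_n})$. The non-simultaneous vanishing of $(s, f_n)$ reflects the disjointness of the relevant divisors on $\overline{\W}_n(1, m_1, \ldots, m_{n-1}, 1)$ coming from Proposition~\ref{prop:boundarydivisorsWn}. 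This produces the desired equivalence with $\DIV^{[1, m_1, \ldots, m_n]}(T)$, and the explicit dictionary given for $n = 2$ generalizes to arbitrary $n$.

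The main obstacle is the bookkeeping: at each inductive step the canonical bundle $\orb(1)$ is modified first by a pullback along $r$ and then by an $m_n$th root, and one must verify that in the final tuple a single line bundle $L$ together with sections living in the prescribed tensor powers $L^{m_i}$ is consistent with these operations. The compatibility of morphisms of tuples with 2-morphisms of stacky maps, and the precise non-vanishing conditions on the sections, follow automatically from the functoriality of pullback and root stack constructions together with the description of divisors in Proposition~\ref{prop:boundarydivisorsWn} and Lemma~\ref{lem:Wngraded}.
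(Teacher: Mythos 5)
Your argument is essentially the paper's own: the paper establishes the $n=2$ case using \cite[Prop.~6.4]{kob} for the base, the universal property of the projective bundle $\overline{\W}_{n} = \P(\orb\oplus\orb(p))$ to encode the lift along $r$ by the section coming from $\Sigma_{n}$, and the root stack description \cite[Prop.~5.3]{kob} for the final weight $m_{n}$, and then asserts that this generalizes; your induction on $n$ via Lemma~\ref{lem:CFGequiv} is precisely that generalization written out, with the same citations and the same two-step structure. The remaining bookkeeping you defer to functoriality is handled at the same level of detail in the paper itself.
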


\begin{cor}
For any $m_{1},\ldots,m_{n}\geq 1$, $\DIV^{[1,m_{1},\ldots,m_{n}]}$ is a stack of dimension $n$. 
\end{cor}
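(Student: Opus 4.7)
By Proposition~\ref{prop:ASWlinebundlesectionclassification}, it suffices to establish that $\overline{\W}_{n}(1,m_{1},\ldots,m_{n})$ is an algebraic stack of dimension $n$. My plan is to induct on $n$, using the inductive construction of the compactified Witt stack as an iterated combination of $\P^{1}$-bundles and root stacks.

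For the base case $n = 1$, we have $\overline{\W}_{1}(1,m_{1}) = \P(1,m_{1})$, which is a one-dimensional smooth Deligne--Mumford stack (étale-locally a quotient of $\A^{1}$ by a finite cyclic group, cf.~\cite[Sec.~6]{kob}). For the inductive step, assume $\overline{\W}_{n-1}(1,m_{1},\ldots,m_{n-1})$ is an algebraic stack of dimension $n - 1$. By construction, $\overline{\W}_{n}(1,m_{1},\ldots,m_{n-1},1) = r^{*}\overline{\W}_{n-1}(1,m_{1},\ldots,m_{n-1})$ fits into a cartesian square with the $\P^{1}$-bundle $r : \overline{\W}_{n} \to \overline{\W}_{n-1}$, or equivalently (by Lemma~\ref{lem:Wittstackquotient}) it is the quotient $[V_{n}\smallsetminus\{0\}/\G_{m}]$ for a rank $2$ vector bundle over $\overline{\W}_{n-1}(1,m_{1},\ldots,m_{n-1})$. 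Either description shows it is an algebraic stack whose dimension is one more than that of the base, hence equal to $n$.

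Finally, $\overline{\W}_{n}(1,m_{1},\ldots,m_{n})$ is by definition the $m_{n}$th root stack of the pair $(\orb(1),\Sigma_{n})$ over $\overline{\W}_{n}(1,m_{1},\ldots,m_{n-1},1)$. Root stacks of algebraic stacks along effective Cartier divisors are algebraic (cf.~\cite[Rem.~2.2.3]{cad}), and étale-locally they have the form $[\Spec A[t]/(t^{m_{n}} - u)/\mu_{m_{n}}]$ over the base, where $u$ is a local equation of the divisor; this is an étale cover of the base of equal dimension, and quotienting by a finite group scheme preserves dimension. Thus $\overline{\W}_{n}(1,m_{1},\ldots,m_{n})$ is again of dimension $n$, completing the induction.

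The only mild subtlety is verifying that the root stack operation in step three does not increase (or decrease) dimension; this reduces to the standard fact that $[\Spec A[t]/(t^{r} - u)/\mu_{r}]$ is of the same dimension as $\Spec A$ when $u$ is a nonzerodivisor, since $\mu_{r}$ is finite. Everything else is a direct bookkeeping application of the inductive construction together with Proposition~\ref{prop:ASWlinebundlesectionclassification}.
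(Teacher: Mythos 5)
Your proposal is correct and follows essentially the route the paper intends: the corollary is immediate from Proposition~\ref{prop:ASWlinebundlesectionclassification} once one notes that $\overline{\W}_{n}(1,m_{1},\ldots,m_{n})$ is built from the $n$-dimensional scheme $\overline{\W}_{n}$ (an iterated $\P^{1}$-bundle over $\P^{1}$) by dimension-preserving root stack operations, which is exactly your induction. One minor slip: the local cover $\Spec A[t]/(t^{m_{n}} - u)\rightarrow\Spec A$ is finite flat but not \'{e}tale (it ramifies along $u = 0$, and in characteristic $p$ the group scheme $\mu_{m_{n}}$ may even be non-reduced when $p\mid m_{n}$); however, finiteness and flatness are all the dimension count requires, as your closing remark correctly observes.
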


\begin{defn}
Let $X$ be a scheme, $(m_{1},\ldots,m_{n})$ a sequence of positive integers and consider a tuple $(L,s,f_{1},\ldots,f_{n})$ consisting of a line bundle $L$ on $X$ and sections $s\in\Gamma(X,L)$ and $f_{i}\in\Gamma(X,L^{m_{i}})$, $1\leq i\leq n$, which do not vanish simultaneously. The {\bf Artin--Schreier--Witt root stack} of $\X$ along $(L,s,f_{1},\ldots,f_{n})$ is the normalized pullback $\Psi^{-1}((L,s,f_{1},\ldots,f_{n})/\X)$ of the diagram 
\begin{center}
\begin{tikzpicture}[xscale=6,yscale=2]
  \node at (0,1) (a) {$\Psi^{-1}((L,s,f_{1},\ldots,f_{n})/X)$};
  \node at (1,1) (b) {$[\overline{\W}_{n}(1,m_{1},\ldots,m_{n})/\W_{n}]$};
  \node at (0,0) (c) {$X$};
  \node at (1,0) (d) {$[\overline{\W}_{n}(1,m_{1},\ldots,m_{n})/\W_{n}]$};
  \draw[->] (a) -- (b);
  \draw[->] (a) -- (c);
  \draw[->] (b) -- (d) node[right,pos=.5] {$\Psi$};
  \draw[->] (c) -- (d);
  \node at (.1,.7) {$\nu$};
  \draw (.067,.6) -- (.133,.6) -- (.133,.8);
\end{tikzpicture}
\end{center}
where $\Psi$ is the cyclic degree $p^{n}$ morphism from Proposition~\ref{prop:univASWcoverstack} and the bottom row is induced by $(L,s,f_{1},\ldots,f_{n})$, following Proposition~\ref{prop:ASWlinebundlesectionclassification}. 
\end{defn}

As in \cite[Sec.~6]{kob}, this definition extends to a base which is a stack. For a stack $\X$, set $\DIV^{[1,m_{1},\ldots,m_{n}]}(\X) = \Hom_{\cat{Stacks}}(\X,\DIV^{[1,m_{1},\ldots,m_{n}]})$ and likewise set $\overline{\W}_{n}(1,m_{1},\ldots,m_{n})(\X) = \Hom_{\cat{Stacks}}(\X,\overline{\W}_{n}(1,m_{1},\ldots,m_{n}))$. 

\begin{defn}
For a stack $\X$, a sequence of positive integers $(m_{1},\ldots,m_{n})$ and a tuple $(\L,s,f_{1},\ldots,f_{n})\in\overline{\W}_{n}(1,m_{1},\ldots,m_{n})(\X)$, the {\bf Artin--Schreier--Witt root stack} of $\X$ along $(\L,s,f_{1},\ldots,f_{n})$ is defined to be the normalized pullback $\Phi^{-1}((\L,s,f_{1},\ldots,f_{n})/\X)$ of the diagram 
\begin{center}
\begin{tikzpicture}[xscale=6,yscale=2]
  \node at (0,1) (a) {$\Psi^{-1}((\L,s,f_{1},\ldots,f_{n})/\X)$};
  \node at (1,1) (b) {$[\overline{\W}_{n}(1,m_{1},\ldots,m_{n})/\W_{n}]$};
  \node at (0,0) (c) {$\X$};
  \node at (1,0) (d) {$[\overline{\W}_{n}(1,m_{1},\ldots,m_{n})/\W_{n}]$};
  \draw[->] (a) -- (b);
  \draw[->] (a) -- (c);
  \draw[->] (b) -- (d) node[right,pos=.5] {$\Psi$};
  \draw[->] (c) -- (d);
  \node at (.1,.7) {$\nu$};
  \draw (.067,.6) -- (.133,.6) -- (.133,.8);
\end{tikzpicture}
\end{center}
\end{defn}

\begin{rem}
\label{rem:ASWlocalpts}
As in \cite[Rmk.~6.10]{kob}, we can interpret the $T$-points of an Artin--Schreier--Witt root stack $\Psi^{-1}((L,s,f_{1},\ldots,f_{n})/X)$ for ``local enough'' $T$: \'{e}tale-locally, they are tuples $(\varphi,M,t,g_{1},\ldots,g_{n},\psi)$ where $T\xrightarrow{\varphi}X$ is a morphism of schemes, $M$ is a line bundle on $T$, $M^{p^{n}}\xrightarrow{\psi}\varphi^{*}L$ is an isomorphism of line bundles, $t\in H^{0}(T,M)$ and for each $1\leq i\leq n$, $g_{i}\in H^{0}(T,M^{m_{i}})$, all satisfying 
$$
\psi(t^{p^{n}}) = \varphi^{*}s \quad\text{and}\quad \psi(g_{i}^{p} - t^{m_{i}(p - 1)}g_{i}) = \varphi^{*}f_{i} \text{ for } 1\leq i\leq n. 
$$

The global situation is a little more delicate than in {\it loc.~cit.}, so we take care to explain it here. Let $T$ be a normal scheme. For $n = 1$, the $T$-points of $\Psi^{-1}((L,s,f)/X)$ are tuples $(\varphi,M,t,g,\psi)$, this time with $g\in H^{0}(m_{1}(t),M^{m_{1}}|_{m_{1}(t)})$ a ``local section'', or germ at each point of the support of the divisor $m_{1}(t)$. Generalizing this, for any $n$, set $\X_{i} = \Psi^{-1}((L,s,f_{1},\ldots,f_{i})/X)$, $\eta_{i} : \X_{i}\rightarrow\X_{i - 1}$ the canonical projection, and $D_{i} = \eta_{i}^{-1}(t)$ for each $1\leq i\leq n - 1$. Then with $T$ still normal, the $T$-points of $\Psi^{-1}((L,s,f_{1},\ldots,f_{n})/X)$ are $(\varphi,M,t,g_{1},\ldots,g_{n},\psi)$ with $g_{i}\in H^{0}(m_{i - 1}D_{i - 1},M^{m_{i}}|_{D_{i - 1}})$ and the rest as above. A concrete example of this phenomenon can be found in Example~\ref{ex:elemASW}. When $T$ is not normal, things are probably too complicated to write down generally. However, a higher order version of \cite[Ex.~6.13]{kob} is possible in theory, either by iterating the method described in \cite[Rmk.~6.2]{kob} (see also \cite[Lem.~5.5]{ls}) or by generalizing that result using Witt vectors. See also \cite[Sec.~2]{mad}. 
\end{rem}


\section{Classification Theorems}
\label{sec:mainthms}

In this section, we use the construction of Artin--Schreier--Witt root stacks to classify stacky curves in positive characteristic with cyclic $p$th-power automorphism groups. This completes the cyclic version of the program begun in \cite{kob}. For the cyclic-by-tame case, see Subsection~\ref{sec:cyclicbytame}, and for remarks on the general case, see Section~\ref{sec:future}. 

\begin{lem}
\label{lem:functorial}
Let $h : \Y\rightarrow\X$ be a morphism of stacks and $(\L,s,f_{1},\ldots,f_{n})$ an object in $\DIV^{[1,m_{1},\ldots,m_{n}]}(\X)$. Then there is an isomorphism of algebraic stacks 
$$
\Psi^{-1}((h^{*}\L,h^{*}s,h^{*}f_{1},\ldots,h^{*}f_{n})/\Y) \xrightarrow{\;\sim\;} \Psi^{-1}((\L,s,f_{1},\ldots,f_{n})/\X)\times_{\X}^{\nu}\Y. 
$$
\end{lem}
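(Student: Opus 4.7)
The plan is to unpack both sides as normalized pullbacks along $\Psi$ and then invoke the pasting lemma for fiber products together with a compatibility between relative normalization and base change.

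Set $W := [\overline{\W}_{n}(1,m_{1},\ldots,m_{n})/\W_{n}]$, and let $\alpha : \X \to W$ be the morphism classifying $(\L,s,f_{1},\ldots,f_{n})$ via Proposition \ref{prop:ASWlinebundlesectionclassification}. By the naturality of that classification, the pulled-back tuple $(h^{*}\L, h^{*}s, h^{*}f_{1}, \ldots, h^{*}f_{n})$ classifies the composition $\alpha \circ h : \Y \to W$. Unfolding the definition of the Artin--Schreier--Witt root stack then gives
$$\Psi^{-1}((\L,s,f_{1},\ldots,f_{n})/\X) = (W \times_{W,\Psi,\alpha} \X)^{\nu}$$
and
$$\Psi^{-1}((h^{*}\L,\ldots,h^{*}f_{n})/\Y) = (W \times_{W,\Psi,\alpha\circ h} \Y)^{\nu}.$$

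Next, the pasting lemma for fiber products in the $2$-category of algebraic stacks provides a canonical equivalence
$$W \times_{W,\Psi,\alpha\circ h} \Y \;\simeq\; (W \times_{W,\Psi,\alpha} \X) \times_{\X} \Y,$$
and by Definition \ref{defn:normalpullback}, one has $\Zz_{1} \times_{\X}^{\nu} \Zz_{2} = (\Zz_{1} \times_{\X} \Zz_{2})^{\nu}$. So the claim reduces to the identity
$$\bigl((W \times_{W} \X) \times_{\X} \Y\bigr)^{\nu} \;\simeq\; \bigl((W \times_{W} \X)^{\nu} \times_{\X} \Y\bigr)^{\nu}.$$

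The heart of the argument, and the main obstacle, is thus the general fact that $(A \times_{B} C)^{\nu} \simeq (A^{\nu} \times_{B} C)^{\nu}$ for morphisms $A, C \to B$ of locally Noetherian algebraic stacks. I would establish this using the universal property recorded in the lemma following Definition \ref{defn:normalpullback}: since $A^{\nu} \to A$ is an integral surjection inducing a bijection on irreducible components, so is its base change $A^{\nu} \times_{B} C \to A \times_{B} C$. Any dominant-on-irreducible-components map $\Zz \to A \times_{B} C$ from a normal stack $\Zz$ therefore factors uniquely through $A^{\nu} \times_{B} C$ by applying the terminality of $A^{\nu} \to A$ to the first projection, and mutually inverse factorizations between the two relative normalizations follow by terminality on each side. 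The most delicate technical point is checking that dominance on irreducible components is preserved under the relevant base changes; this can be handled by reducing to a smooth étale presentation of the stacks, where the analogous statement for schemes is standard.
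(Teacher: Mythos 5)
Your reduction is set up correctly: naturality of the classification in Proposition~\ref{prop:ASWlinebundlesectionclassification} plus pasting of $2$-fibre products does reduce the lemma to a compatibility between relative normalization and base change, and this is indeed where all the content lies (the paper itself simply defers to \cite[Lem.~6.11]{kob} at this point). The gap is that the general fact you then rely on, $(A\times_{B}C)^{\nu}\simeq (A^{\nu}\times_{B}C)^{\nu}$ for arbitrary morphisms of locally Noetherian stacks, is false, and the argument you sketch for it breaks at two identifiable places. First, ``integral surjection inducing a bijection on irreducible components'' is not stable under base change: take $A=\Spec k[t,y]/(y^{2}-t^{2}(t+1))$, finite and flat over $B=\A^{1}=\Spec k[t]$, and $C=\Spec k\rightarrow B$ the point $t=0$ (char $k\neq 2$). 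Then $A\times_{B}C=\Spec k[y]/(y^{2})$ has one irreducible component and $(A\times_{B}C)^{\nu}=\Spec k$, while $A^{\nu}\times_{B}C$ consists of two reduced points (the two preimages of the node), so $A^{\nu}\times_{B}C\rightarrow A\times_{B}C$ is not a bijection on components and the two sides of your identity are one point versus two points. Second, you cannot invoke the terminality of $A^{\nu}\rightarrow A$ for the composite $\Zz\rightarrow A\times_{B}C\rightarrow A$, because that composite need not be dominant on the irreducible components of $A$ -- in the example it lands entirely in one fibre. This is exactly the shape of the situation in the lemma: your $A$ is the un-normalized pullback of $\Psi$, which fails to be normal precisely over the zero locus of $s$, and $h$ is an arbitrary morphism that may well map $\Y$ into that locus, so the failure is not a removable technicality but the actual delicate point.

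Your closing remark that the remaining issue ``reduces to a standard statement for schemes'' is also not right: normalization commutes with smooth (more generally, suitably nice, e.g.\ \'{e}tale) base change, not with arbitrary base change, as the example above shows. To repair the argument one has to use the specific structure of the Artin--Schreier--Witt root stack rather than a general lemma on normalized pullbacks -- for instance, compare the two sides smooth-locally on $\Y$ using the explicit description of their points as tuples $(\varphi,M,t,g_{1},\ldots,g_{n},\psi)$ from Remark~\ref{rem:ASWlocalpts}, or reduce to the elementary presentations $[Y_{w}/(\Z/p^{n}\Z)]$ of Example~\ref{ex:elemASW} -- which is in effect what the cited proof of \cite[Lem.~6.11]{kob} does in the $n=1$ case. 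As written, the heart of your proof asserts a false general statement, so the proposal has a genuine gap.
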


\begin{proof}
See \cite[Lem.~6.11]{kob}. 
\end{proof}

\begin{ex}
\label{ex:key}
Consider the smooth, projective $\Z/p^{2}\Z$-cover $Y$ of $\P_{k}^{1}$ given birationally by the Witt vector equation $\wp\underline{x} = (t^{-j},0)$ where $\underline{x} = (x,y)\in\W_{2}(\bar{k})$ and $p\nmid j$. On the level of function fields, this corresponds to the tower of fields $L\supseteq K\supseteq k((t))$ with equations 
\begin{align*}
  x^{p} - x = t^{-j} &\qquad \text{(I)}\\
  y^{p} - y = t^{-j}x &\qquad \text{(II)}
\end{align*}
which has Galois groups $G = \Gal(L/k((t))) \cong \Z/p^{2}\Z$, $H = \Gal(L/K) \cong \Z/p\Z$ and $G/H = \Gal(K/k((t))) \cong \Z/p\Z$. Let $X$ be the smooth, projective curve with affine equation (I), giving us a sequence of covers $Y\xrightarrow{\psi} X\xrightarrow{\varphi} \P_{k}^{1}$. By Theorem~\ref{thm:garjump}, the ramification jumps in the upper numbering are $j$ and $pj$. If $\P_{k}^{1} = \Proj k[x_{0},x_{1}]$, \cite[Ex.~6.12]{kob} shows that the quotient stack $\X := [X/(G/H)]$ is an Artin--Schreier root stack over the point $[0 : 1]\in\P_{k}^{1}$ with jump $j$: 
$$
\X = [Y/(G/H)] \cong \wp_{j}^{-1}((\orb(1),x_{0},x_{1}^{j})/\P_{k}^{1}) \cong \P_{k}^{1}\times_{[\P(1,j)/\G_{a}]}^{\nu}[\P(1,j)/\G_{a}]. 
$$
Similarly, the quotient stack $\Zz := [Y/H]$ is an Artin--Schreier root stack over the preimage of $[0 : 1]$ in $X$, this time with jump $pj$: 
$$
\Zz = [Y/H] \cong \wp_{pj}^{-1}((\orb_{X}(1),s,f)/X) \cong X\times_{[\P(1,pj)/\G_{a}]}^{\nu}[\P(1,pj)/\G_{a}]
$$
where $s = \varphi^{*}x_{0}$ and $f\in H^{0}(\X,\orb_{\X}(pj)|_{P})$ corresponds to $t^{-j}x$ as a germ of a rational function at $P = \alpha^{-1}(\infty)$, where $\alpha : \X\rightarrow\P^{1}$ is the coarse moduli map. 

We'd like to describe $\Y := [Y/G]$ in a similar fashion. Below is a diagram showing the relations between $\P^{1},X,Y$ and the quotients $\X,\Y$ and $\Zz$: 
\begin{center}
\begin{tikzpicture}[xscale=2.8,yscale=1.2]
  \node at (0,2) (Y) {$Y$};
  \node at (0,1) (YH) {$[Y/H]$};
  \node at (0,0) (YG) {$[Y/G]$};
  \node at (1,1) (X) {$X$};
  \node at (1,0) (XGH) {$[X/(G/H)]$};
  \node at (2,0) (P1) {$\P^{1}$};
  \draw[->] (Y) -- (YH);
  \draw[->] (Y) -- (X);
  \draw[->] (YH) -- (YG);
  \draw[->,dashed] (YH) -- (X) node[above,pos=.4] {$\delta$};
  \draw[->] (X) -- (XGH);
  \draw[->] (X) -- (P1);
  \draw[->,dashed] (YG) -- (XGH) node[above,pos=.5] {$\beta$};
  \draw[->,dashed] (YG) to[out=-40,in=220] (P1);
    \node at (.9,-.8) {$\gamma = \alpha\circ\beta$};
  \draw[->,dashed] (XGH) -- (P1) node[above,pos=.4] {$\alpha$};
\end{tikzpicture}
\end{center}
Here, each solid vertical arrow is a degree $p$ quotient, the solid diagonal arrows are the degree $p$ ramified covers described above (equations (I) and (II)), and the dashed horizontal arrows are Artin--Schreier root stacks -- from Lemma~\ref{lem:doublequotient} it follows that $\beta$ is an Artin--Schreier root over the preimage of $\infty$ in $\X$ with jump $pj$, while the others are as described above. The composition $\gamma = \alpha\circ\beta$ can similarly be described as an Artin--Schreier--Witt root over $\infty$ with jumps $j$ and $pj$: the tuple $(L,x_{0},x_{1}^{j},f)$ on $\P^{1}$ determines a morphism $\P^{1}\rightarrow [\overline{\W}_{2}(1,j,pj)/\W_{2}]$ and pulling back along $\Psi = \Psi_{j,pj} : [\overline{\W}_{2}(1,j,pj)/\W_{2}]\rightarrow [\overline{\W}_{2}(1,j,pj)/\W_{2}]$ yields $\Y$: 
\begin{center}
\begin{tikzpicture}[xscale=4,yscale=2]
  \node at (0,0) (P1) {$\P^{1}$};
  \node at (0,1) (X) {$\X$};
  \node at (0,2) (Y) {$\Y$};
  \node at (1,0) (a1) {$[\P(1,j)/\G_{a}]$};
  \node at (1,1) (a2) {$[\P(1,j)/\G_{a}]$};
  \node at (2,0) (w1) {$[\overline{\W}_{2}(1,j,pj)/\W_{2}]$};
  \node at (2,2) (w2) {$[\overline{\W}_{2}(1,j,pj)/\W_{2}]$};
  \draw[->] (P1) -- (a1);
  \draw[->] (w1) -- (a1) node[above,pos=.5] {$r$};
  \draw[->] (X) -- (P1);
  \draw[->] (X) -- (a2);
  \draw[->] (w2) -- (a2) node[above,pos=.5] {$r$};
  \draw[->] (a2) -- (a1) node[left,pos=.5] {$\wp_{j}$};
  \draw[->] (w2) -- (w1) node[left,pos=.5] {$\Psi_{j,pj}$};
  \draw[->] (Y) -- (X);
  \draw[->] (Y) -- (w2);
\end{tikzpicture}
\end{center}
\end{ex}

\begin{ex}
\label{ex:elemASW}
More generally, for any curve $X$ and Witt vector-valued function $w\in \W_{n}(k(X))\smallsetminus\wp(\W_{n}(k(X)))$, let $Y_{w}$ be the curve over $X$ assigned to $F$ by Theorem~\ref{thm:AWSextclassification}; call the corresponding ramified $\Z/p^{n}\Z$-cover $\pi : Y_{w}\rightarrow X$. This determines a system of equations 
$$
y_{i}^{p} - y_{i} = F_{i}, \quad 0\leq i\leq n - 1
$$
where $F_{0}\in k(X)$ and each $F_{i}$ is a polynomial in $F_{0},\ldots,F_{i - 1},y_{0},\ldots,y_{i - 1}$ over $k(X)$. Then, \'{e}tale-locally about each ramification point on $X$, there is an isomorphism 
$$
\varphi : \Psi^{-1}((L,s,f_{1},\ldots,f_{n})/X) \xrightarrow{\;\sim\;} [Y_{w}/(\Z/p^{n}\Z)]
$$
where $(L,s,f_{1},\ldots,f_{n})$ is defined as follows. First, the pair $(L,s)$ corresponds to the divisor $\div(F_{0})$ on $X$. Next, for each $1\leq i\leq n - 1$, define $\X_{i}$ to be the stack obtained by replacing an \'{e}tale neighborhood $U_{P}$ of each point $P$ in the support of $\div(F_{0})$ with the quotient $[[U_{P}/G_{P,i}]/(G_{P,0}/G_{P,i})]$, where $G_{P,0} = \Gal(U_{P}/\pi(U_{P}))$ and $G_{i,P}\subseteq G_{P,0}$ is the $i$th ramification group in the upper numbering. For each $i$, choose $f_{i}\in H^{0}(\X_{i - 1},\orb_{\X_{i - 1}}(u_{i})|_{P_{i - 1}})$ corresponding to $F_{i}$, viewed as a germ of a rational function about $P_{i - 1}$, the preimage of $P$ in $\X_{i}$ (explicitly, one can restrict $F_{i}|_{\pi(U_{P})}$ and pull back to $\X_{i}$ to get $f_{i}$). By Theorem~\ref{thm:garjump}, each $f_{i}$ has valuation $u_{i}$ at $P$, where $u_{1},\ldots,u_{n}$ are the $n$ upper jumps in the ramification filtration $G_{P,0}\supseteq G_{P,1}\supseteq\cdots$. The isomorphism $\varphi$ follows as in Example~\ref{ex:key}; see also Remark~\ref{rem:ASWlocalpts}. 
\end{ex}

In general, every Artin--Schreier--Witt root stack $\Psi^{-1}((L,s,f_{1},\ldots,f_{n})/X)$ can be covered in the \'{e}tale topology by ``elementary'' ASW root stacks of the form $[Y/(\Z/p^{n}\Z)]$ as above. Rigorously: 

\begin{prop}
Let $\X = \Psi^{-1}((L,s,f_{1},\ldots,f_{n})/X)$ be an Artin--Schreier--Witt root stack of a scheme $X$ along a tuple $(L,s,f_{1},\ldots,f_{n})\in\DIV^{[1,m_{1},\ldots,m_{n}]}(X)$ and let $\pi : \X\rightarrow X$ be the coarse map. Then for any point $\bar{x} : \Spec k\rightarrow\X$, there is an \'{e}tale neighborhood $U$ of $x = \pi(\bar{x})$ such that $U\times_{X}\X\cong [Y/(\Z/p^{n}\Z)]$ where $Y$ is a smooth, projective Artin--Schreier--Witt cover of $U$. 
\end{prop}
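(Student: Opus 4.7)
The plan is to reduce the statement to a local setting where the line bundle $L$ and the $\W_n$-torsor implicit in the classifying map both trivialize, and then to exploit the fact that $\Psi$ descends from the $\Z/p^n\Z$-Galois isogeny $\wp$ on $\W_n$ together with its compatible extension $\Psi_n$ to $\overline{\W}_n(\overline{m})$ from Proposition~\ref{prop:univASWcoverstack}. First I would choose an étale neighborhood $U \to X$ of $x = \pi(\bar{x})$ small enough that $L|_U$ is trivial, $U$ is affine, and the $\W_n$-torsor classified by the composition $U \to [\overline{\W}_n(\overline{m})/\W_n]$ becomes trivial; the last condition is possible because $\W_n$ is a smooth affine group scheme over $\F_p$, so $\W_n$-torsors are étale-locally trivial. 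By the functoriality Lemma~\ref{lem:functorial}, $U \times_X \X \cong \Psi^{-1}((L|_U, s|_U, f_1|_U, \ldots, f_n|_U)/U)$, so it suffices to produce the desired quotient description over $U$.

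Under these trivializations, Proposition~\ref{prop:ASWlinebundlesectionclassification} yields a lift $\tilde{h} : U \to \overline{\W}_n(\overline{m})$ of the map to the quotient stack, and I would define the scheme cover
$$
Y := U \times_{\overline{\W}_n(\overline{m}), \Psi_n}^{\nu} \overline{\W}_n(\overline{m}),
$$
where one leg is $\tilde{h}$ and the other is $\Psi_n$. Since $\Psi_n$ extends the $\Z/p^n\Z$-Galois isogeny $\wp : \W_n \to \W_n$, the induced map $Y \to U$ inherits a $\Z/p^n\Z$-action making it an Artin--Schreier--Witt cover branched over the preimage of $B_n$. It is projective over $U$ because finite, and its smoothness follows from the explicit Witt-vector description in Example~\ref{ex:elemASW}: near each branch point, $Y$ is cut out by a tower of equations $y_i^p - y_i = F_i(y_0, \ldots, y_{i-1})$ that define a smooth extension of $\orb_U$.

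The key identification is then
$$
\X|_U \cong U \times_{[\overline{\W}_n(\overline{m})/\W_n]}^{\nu} [\overline{\W}_n(\overline{m})/\W_n] \cong [Y/(\Z/p^n\Z)].
$$
To establish the second isomorphism, I would apply Lemma~\ref{lem:doublequotient} with the normal subgroup $\ker \wp \cong \Z/p^n\Z \subseteq \W_n$, factoring $[\overline{\W}_n(\overline{m})/\W_n]$ through the intermediate quotient $[\overline{\W}_n(\overline{m})/(\Z/p^n\Z)]$. After the torsor trivialization, $\Psi$ restricts on this intermediate stack to the quotient morphism by the $\Z/p^n\Z$-action, and the normalized pullback becomes exactly $[Y/(\Z/p^n\Z)]$. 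As a cross-check, one can match $T$-points directly using Remark~\ref{rem:ASWlocalpts}, pairing a tuple $(\varphi, M, t, g_1, \ldots, g_n, \psi)$ on the $\X|_U$ side with a $\Z/p^n\Z$-equivariant morphism from a torsor on $T$ into $Y$.

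The main obstacle is this final identification. The $\W_n$-action implemented through translation on the source and target of $\Psi_n$ is not equivariant in the naïve sense (the two actions differ by $\wp$), so the clean Galois structure only emerges once the torsor trivialization and the normalization are combined. Tracking the torsor through the normalized pullback, and confirming that the $\Z/p^n\Z$-Galois cover it produces is exactly the scheme $Y$ above, is the crux of the argument.
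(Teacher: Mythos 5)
Your reduction steps (shrink to an \'{e}tale neighborhood, trivialize $L$, invoke Lemma~\ref{lem:functorial}) match the paper, and the idea of producing $Y$ as the normalized pullback of $\Psi$ along a lift $\tilde{h} : U\rightarrow\overline{\W}_{n}(1,m_{1},\ldots,m_{n})$ is reasonable --- note, though, that the trivialization of the $\W_{n}$-torsor is superfluous, since by the very definition of the root stack the map $X\rightarrow[\overline{\W}_{n}(1,m_{1},\ldots,m_{n})/\W_{n}]$ is the composite of the map furnished by Proposition~\ref{prop:ASWlinebundlesectionclassification} with the quotient projection, so a global lift exists from the start. The problems come after that. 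First, your $Y$ is a normalized fibre product of \emph{stacks}: $\overline{\W}_{n}(1,m_{1},\ldots,m_{n})$ carries $\mu_{m_{i}}$-inertia along the divisors $\Sigma_{i}$, and $\tilde{h}$ hits this locus exactly at the branch point, so calling $Y$ a ``scheme cover'' needs proof. One must check that the residual $\mu_{m_{i}}$-stabilizers act freely on the pullback (and that the resulting space is the normalization of $U$ in the Artin--Schreier--Witt extension cut out by the Witt equations, which is where smoothness actually comes from --- the affine equations $y_{i}^{p}-y_{i}=F_{i}$ do not define a smooth model at the ramification point without normalizing). This freeness is a genuine computation; for $i\geq 2$ the upper jumps $m_{i}$ need not be prime to $p$, so no quick coprimality argument disposes of it, and you do not address it at all.

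Second, and more seriously, the isomorphism $\X|_{U}\cong[Y/(\Z/p^{n}\Z)]$ --- which \emph{is} the proposition --- is exactly the step you label ``the crux'' and leave unproven. Lemma~\ref{lem:doublequotient} by itself does not give it: what is needed is the descent statement that for the $\wp$-equivariant map $\Psi$, pulling back along $[\Psi]$ on the $\W_{n}$-quotient stacks agrees with first pulling back along $\Psi$ on $\overline{\W}_{n}(1,m_{1},\ldots,m_{n})$ and then taking the stack quotient by $\ker\wp\cong\Z/p^{n}\Z$, compatibly with normalization. The paper's proof supplies precisely this content by citing Example~\ref{ex:elemASW}, where the identification is made explicitly: the tuple $(L,s,f_{1},\ldots,f_{n})$ is matched with a Witt-vector equation for the cover, the quotient structure is assembled by iterating the $n=1$ analysis of Example~\ref{ex:key} via Lemma~\ref{lem:doublequotient}, and the comparison of $T$-points is carried out through Remark~\ref{rem:ASWlocalpts}. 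In other words, your proposal reduces the statement to its central identification and stops there, so as written it does not constitute a proof; to complete it you would either carry out the torsor-descent computation you sketch in the last paragraph, or simply invoke the local description of Example~\ref{ex:elemASW} as the paper does.
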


\begin{proof}
Apply Lemma~\ref{lem:functorial} and Example~\ref{ex:elemASW}. See also \cite[Prop.~6.14]{kob}. 
\end{proof}

We are now ready to extend the classification results in \cite[Sec.~6]{kob} to wild stacky curves with $\Z/p^{n}\Z$ automorphism groups. We say a sequence of positive integers $m_{1},\ldots,m_{n}$ is {\it admissible} if it satisfies the conditions in \cite[Lem.~3.5]{op}, i.e.~if it is possible for $m_{1},\ldots,m_{n}$ to occur as the ramification jumps in the upper ramification filtration for a $\Z/p^{n}\Z$-extension of local fields. 

\begin{thm}
Let $\X$ be a Deligne--Mumford stack over a perfect field $k$ of characteristic $p > 0$ and let $m_{1},\ldots,m_{n}\geq 1$ be an admissible sequence. Then for any tuple $(\L,s,f_{1},\ldots,f_{n})\in\DIV^{[1,m_{1},\ldots,m_{n}]}(\X)$, the Artin--Schreier--Witt root stack $\Y = \Psi^{-1}((\L,s,f_{1},\ldots,f_{n})/\X)$ is also Deligne--Mumford. 
\end{thm}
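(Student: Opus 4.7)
The plan is to reduce to the scheme case via an étale presentation and then apply the structure result in the previous proposition, verifying the Deligne--Mumford condition via Remark~\ref{rem:DMfiniteaut}.

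First I would use that $\X$ is Deligne--Mumford to choose a smooth étale presentation $U\rightarrow\X$ by a scheme $U$. By Lemma~\ref{lem:functorial}, the normalized pullback satisfies
\[
\Y\times_{\X}^{\nu} U \;\cong\; \Psi^{-1}((\L|_{U},s|_{U},f_{1}|_{U},\ldots,f_{n}|_{U})/U),
\]
so the question reduces to showing that Artin--Schreier--Witt root stacks of schemes (along admissible tuples) are Deligne--Mumford. Indeed, if the latter holds then $\Y\times_{\X}^{\nu}U$ is Deligne--Mumford; since $U\rightarrow\X$ is an étale cover and the property of being Deligne--Mumford is étale-local on the target, $\Y$ itself is Deligne--Mumford.

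Next, for the Artin--Schreier--Witt root stack of a scheme, I would invoke the preceding proposition: at every geometric point, there is an étale neighborhood $V$ of $U$ on which the root stack is isomorphic to $[Y/(\Z/p^{n}\Z)]$, where $Y$ is a smooth Artin--Schreier--Witt cover of $V$. The admissibility hypothesis on $(m_{1},\ldots,m_{n})$ is precisely what is needed (via Example~\ref{ex:elemASW} and \cite[Lem.~3.5]{op}) to guarantee that a cyclic $\Z/p^{n}\Z$-cover $Y$ realizing these jumps actually exists and is normal about each ramification point. To check that $[Y/(\Z/p^{n}\Z)]$ is Deligne--Mumford, I would appeal to Remark~\ref{rem:DMfiniteaut}: the acting group $\Z/p^{n}\Z$ is a constant finite group scheme over $k$, hence étale (constant group schemes are étale over any base, regardless of whether the order is divisible by the characteristic), and any stabilizer at a geometric point is a subgroup of $\Z/p^{n}\Z$, hence finite and reduced. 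The diagonal $\Delta_{[Y/(\Z/p^{n}\Z)]}$ is finite (hence finitely presented) because the group is, so the stack is Deligne--Mumford.

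The main obstacle I anticipate is the interaction between the normalization step in the definition of $\Psi^{-1}((\L,s,f_{1},\ldots,f_{n})/\X)$ and the local quotient structure: one must verify that the normalized pullback and the local quotient $[Y/(\Z/p^{n}\Z)]$ actually agree étale-locally, not just on a dense open. This is handled by the fact that $k$ is perfect, so that $Y$ is itself normal (in fact smooth) in a neighborhood of the ramification, and by the admissibility hypothesis, which ensures that the normalized pullback locally models the genuine $\Z/p^{n}\Z$-cover dictated by the Witt vector equation rather than some degeneration of it. Once this local identification is in hand, the étale-local Deligne--Mumford property assembles into the global statement via descent.
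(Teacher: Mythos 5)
Your proposal is correct and follows essentially the same route as the paper: reduce to an \'{e}tale neighborhood using Lemma~\ref{lem:functorial}, identify the root stack \'{e}tale-locally with a quotient $[Y/(\Z/p^{n}\Z)]$ via the local structure result (Example~\ref{ex:elemASW} and the preceding proposition), and conclude Deligne--Mumford-ness from the fact that $\Z/p^{n}\Z$ is a finite constant (hence \'{e}tale) group scheme. The only cosmetic difference is that you verify the DM condition through Remark~\ref{rem:DMfiniteaut} on stabilizers, while the paper cites the criterion that quotients by \'{e}tale group schemes are Deligne--Mumford; both are fine.
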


\begin{proof}
Following the proof of \cite[Thm.~6.15]{kob}, it suffices to show this \'{e}tale-locally, say over an \'{e}tale neighborhood $U\rightarrow\X$. We may assume $\L$ is trivial over $U$ and lift $U\rightarrow [\overline{\W}_{n}(1,m_{1},\ldots,m_{n})/\W_{n}]$ to a map $U\rightarrow\overline{\W}_{n}(1,m_{1},\ldots,m_{n})$. Then Lemma~\ref{lem:functorial} and Example~\ref{ex:elemASW} imply $\Y\times_{\X}U \cong [Y/G]$ where $Y$ is a smooth scheme with an action of $G = \Z/p^{n}\Z$ making $Y$ into a $G$-torsor over $U$. Since $G$ is \'{e}tale, this quotient stack is Deligne--Mumford \cite[Cor.~8.4.2]{ols}, so $\Y\times_{\X}U$ is also Deligne--Mumford. 
\end{proof}

\begin{thm}
\label{thm:factorthroughASW}
Let $k$ be an algebraically closed field of characteristic $p > 0$ and suppose $\pi : Y\rightarrow X$ is a finite separable Galois cover of curves over $k$ with a ramification point $y\in Y$ over $x\in X$ such that the inertia group $I(y\mid x)$ is $\Z/p^{n}\Z$. Then there exist \'{e}tale neighborhoods $V\rightarrow Y$ of $y$ and $U\rightarrow X$ of $x$, a sequence of integers $m_{1},\ldots,m_{n}\geq 1$ satisfying the hypotheses of \cite{op}, and a tuple $(L,s,f_{1},\ldots,f_{n})\in\DIV^{[1,m_{1},\ldots,m_{n}]}(U)$ such that $V\rightarrow U$ factors through an Artin--Schreier--Witt root stack 
$$
V\rightarrow \Psi^{-1}((L,s,f_{1},\ldots,f_{n})/U)\rightarrow U. 
$$
\end{thm}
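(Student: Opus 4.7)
The plan is to reduce, by étale localization, to the situation already analyzed in Example~\ref{ex:key} and Example~\ref{ex:elemASW}, and then read off the tuple $(L,s,f_{1},\ldots,f_{n})$ directly from the Witt vector equation defining the local cover.

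First, I would pass to suitable étale neighborhoods. Since $k$ is algebraically closed and $I(y\mid x)\cong\Z/p^{n}\Z$, I can shrink $X$ to an étale neighborhood $U$ of $x$ on which $\pi$ becomes the quotient by a single $\Z/p^{n}\Z$-action: concretely, by the structure of tame-by-wild inertia and the fact that $\pi$ is Galois with inertia a $p$-group, there is an étale neighborhood $U\rightarrow X$ of $x$ and a connected component $V$ of $Y\times_{X}U$ containing $y$ for which $V\rightarrow U$ is a $\Z/p^{n}\Z$-torsor away from $x$ and totally ramified over $x$. (This is the standard reduction used in Artin--Schreier theory and is the $n\geq 2$ analogue of \cite[Thm.~6.16]{kob}.)

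Next, I would invoke Artin--Schreier--Witt theory (Theorem~\ref{thm:AWSextclassification}) to write the function field extension $k(V)/k(U)$ in the form $k(V) = k(U)(\wp^{-1}\underline{a})$ for some Witt vector $\underline{a} = (a_{0},\ldots,a_{n - 1})\in\W_{n}(k(U))$. After possibly modifying $\underline{a}$ by an element of $\wp(\W_{n}(k(U)))$, I may assume that the valuations $-v(a_{i})$ are optimal, so that by Theorem~\ref{thm:garjump} the sequence of upper ramification jumps $(u_{1},\ldots,u_{n})$ of $I(y\mid x)$ is recovered from $\underline{a}$. Setting $m_{i} = u_{i}$ gives an admissible sequence in the sense of \cite[Lem.~3.5]{op}, because it comes from an actual $\Z/p^{n}\Z$-extension of a complete local field.

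Now I construct the tuple. Take $L = \orb_{U}(D)$ for a divisor $D$ supported at $x$ of suitable degree, with canonical section $s$ cutting out $D$, so that $(L,s)$ classifies the map $U\rightarrow [\A^{1}/\G_{m}]$ recording the divisor $x$. For each $1\leq i\leq n$, define $f_{i}$ as in Example~\ref{ex:elemASW}: inductively, after forming the partial Artin--Schreier--Witt stacks $\X_{i - 1}$, set $f_{i}\in H^{0}(m_{i - 1}D_{i - 1},\orb_{\X_{i - 1}}(m_{i})|_{D_{i - 1}})$ to be the germ at $D_{i - 1}$ of the rational function corresponding to $a_{i - 1}$ pulled back to $\X_{i - 1}$. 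By Proposition~\ref{prop:ASWlinebundlesectionclassification}, this tuple defines a morphism $U\rightarrow [\overline{\W}_{n}(1,m_{1},\ldots,m_{n})/\W_{n}]$, and the pullback along $\Psi$ gives the Artin--Schreier--Witt root stack $\Psi^{-1}((L,s,f_{1},\ldots,f_{n})/U)$. Finally, by construction $V\rightarrow U$ is precisely the $\Z/p^{n}\Z$-torsor obtained by pulling $\Psi$ back along the map classified by $(L,s,f_{1},\ldots,f_{n})$, which by Lemma~\ref{lem:functorial} factors through $\Psi^{-1}((L,s,f_{1},\ldots,f_{n})/U)$ as desired.

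The main obstacle is bookkeeping the sections $f_{i}$ correctly as germs on the tower of intermediate root stacks $\X_{1},\ldots,\X_{n - 1}$ (the phenomenon flagged in Remark~\ref{rem:ASWlocalpts}): while the Witt vector $\underline{a}$ is a single algebraic object, the corresponding $f_{i}$ live on \emph{different} stacks and must be arranged so that the valuations match the Garuti ramification formula (Theorem~\ref{thm:garjump}) at each stage. Verifying admissibility of $(m_{1},\ldots,m_{n})$ and ensuring compatibility of $f_{i}$ with the Herbrand transition between lower and upper numbering is essentially where the proof reduces back to the careful local analysis already worked out in \cite{op} and in Example~\ref{ex:elemASW}.
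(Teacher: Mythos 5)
Your proposal is correct in substance and, for the core construction, matches the paper's proof: write the localized cover by an Artin--Schreier--Witt equation, take the $m_{i}$ to be the upper ramification jumps read off the (reduced) Witt vector, take $(L,s)$ to be the divisor $x$ and the $f_{i}$ as in Example~\ref{ex:elemASW}, and deduce the factorization from the definition of $\Psi^{-1}((L,s,f_{1},\ldots,f_{n})/U)$ as a normalized pullback. The one genuine divergence is the reduction step. The paper gets the \'{e}tale neighborhood $U$ and the one-point $\Z/p^{n}\Z$-cover $V\rightarrow U$ from Serre's theorem on generalized Jacobians (\cite[Prop.~VI.11.9]{ser1}), realizing the cover as the pullback of a cyclic degree $p^{n}$ isogeny $J'\rightarrow J_{\frak{m}}$, exactly as in the $n = 1$ argument of \cite[Thm.~6.16]{kob}; you instead localize directly using the inertia subgroup. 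Your route is more elementary and does work, but as written it is asserted rather than argued: the clean justification is that, since $k$ is algebraically closed, the decomposition group at $y$ equals $I(y\mid x)$, so $Y/I(y\mid x)\rightarrow X$ is \'{e}tale near the image of $y$, and a small open subset of this quotient (avoiding the other branch points and the other preimages of $x$) serves as $U$, with $V$ the corresponding piece of $Y$; citing \cite[Thm.~6.16]{kob} for this step is circular, since that is the $n = 1$ instance of the very statement being proved. Two smaller imprecisions: Theorem~\ref{thm:garjump} as stated gives only the last upper jump, so recovering all of $u_{1},\ldots,u_{n}$ requires the full statement of \cite[Thm.~1.1]{gar} (the paper is equally terse here); and your closing appeal to Lemma~\ref{lem:functorial} is not the right mechanism --- the map $V\rightarrow\Psi^{-1}((L,s,f_{1},\ldots,f_{n})/U)$ comes from the fact that the solution $\underline{y}$ of $\wp\underline{y} = \underline{w}$ on $V$ lifts the pulled-back tuple through $\Psi$, hence gives a map from $V$ to the fibre product, which factors through the normalized pullback because $V$ is normal.
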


\begin{proof}
Both proofs of the $n = 1$ case from \cite{kob} generalize, but here's a streamlined version. Since $I = I(y\mid x) = \Z/p^{n}\Z$ is abelian, \cite[Prop.~VI.11.9]{ser1} prescribes a rational map $\varphi : X\dashrightarrow J_{\frak{m}}$ to a generalized Jacobian of $X$ with modulus $\frak{m}$ whose support includes $x$, such that $Y\cong X\times_{J_{\frak{m}}}J'$ for some cyclic, degree $p^{n}$ isogeny $J'\rightarrow J_{\frak{m}}$. Choose an \'{e}tale neighborhood $U'$ of $X$ on which $\varphi$ is defined and set $U = U'\cup\{x\}$. Then $\pi$, which is the pullback of $J'\rightarrow J_{\frak{m}}$, restricts to a one-point cover $\pi|_{V} : V\rightarrow U$ of degree $p^{n}$, ramified exactly at $x$, with Galois group $I$. We would like to extend this to a compactified Witt stack $\mathcal{W} := \overline{\W}_{n}(1,m_{1},\ldots,m_{n})$ for an admissible sequence $m_{1},\ldots,m_{n}$: 
\begin{center}
\begin{tikzpicture}[scale=2]
  \node at (-1,1) (a1) {$V$};
  \node at (0,1) (a) {$J'$};
  \node at (1,1) (b) {$\W_{n}$};
  \node at (2,1) (c) {$\mathcal{W}$};
  \node at (-1,0) (d1) {$U$};
  \node at (0,0) (d) {$J_{\frak{m}}$};
  \node at (1,0) (e) {$\W_{n}$};
  \node at (2,0) (f) {$\mathcal{W}$};
  \draw[->] (a1) -- (d1) node[left,pos=.5] {$\pi|_{V}$};
  \draw[->] (a1) -- (a);
  \draw[->] (a) -- (b);
  \draw[right hook ->] (b) -- (c);
  \draw[->] (d1) -- (d) node[above,pos=.5] {$\varphi$};
  \draw[->] (a) -- (d);
  \draw[->] (b) -- (e) node[right,pos=.5] {$\wp$};
  \draw[->] (c) -- (f) node[right,pos=.5] {$\Psi$};
  \draw[->] (d) -- (e);
  \draw[right hook ->] (e) -- (f);
\end{tikzpicture}
\end{center}

We may assume $\pi|_{V}$ is cut out by an Artin--Schreier--Witt equation $\wp\underline{y} = \underline{w}$ with $\underline{w}\in\W_{n}(k(U))$. For $1\leq i\leq n$, $m_{i} := v_{x}(w_{i})$ is the $i$th the upper jump in the ramification filtration of $I$. Let $(L,s)$ correspond to the divisor $x$ on $U$ and choose sections $f_{i}$ as in Example~\ref{ex:elemASW}. The data $(L,s,f_{1},\ldots,f_{n})$ defines the composition $U\rightarrow\mathcal{W}$ in the bottom row of the diagram. Pulling this data back to $V$ defines the composition in the upper row. Finally, by the definition of $\Psi^{-1}((L,s,f_{1},\ldots,f_{n})/U)$ as a pullback, we get a morphism $V\rightarrow \Psi^{-1}((L,s,f_{1},\ldots,f_{n})/U)$ through which $\pi|_{V}$ factors. 
\end{proof}

\begin{thm}
\label{thm:wildstacky}
Let $\X$ be a stacky curve over a perfect field $k$ of characteristic $p > 0$ with coarse space $X$ and let $x\in|\X|$ be a stacky point with automorphism group $\Z/p^{n}\Z$. Then $\X$ has an open substack containing $x$ of the form $\Psi^{-1}((L,s,f_{1},\ldots,f_{n})/U)$ where $U$ is an open subscheme of $X$ and $(L,s,f_{1},\ldots,f_{n})\in\DIV^{[1,m_{1},\ldots,m_{n}]}(U)$. 
\end{thm}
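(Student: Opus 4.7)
The plan is to combine the local structure theorem for Deligne--Mumford stacks with the factorization result Theorem~\ref{thm:factorthroughASW}, essentially mimicking the approach for the $n=1$ case from \cite{kob}.

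First, I would apply the local structure theorem for Deligne--Mumford stacks (\cite[11.3.1]{ols}, as mentioned in Section~\ref{sec:stackycurves}) to the stacky point $x$ with automorphism group $G_x = \Z/p^{n}\Z$. After shrinking, this produces an étale chart $V\to\X$ such that the induced morphism $[V/G_x]\rightarrow\X$ is an open immersion identifying $[V/G_x]$ with an open substack $\mathcal{Z}\subseteq\X$ containing $x$. Passing to coarse spaces gives an open subscheme $U\subseteq X$ (which we may take Zariski-open on the curve $X$, after further shrinking to exclude other stacky points) together with a finite Galois cover $\pi : V\rightarrow U$ of smooth curves with Galois group $G_x\cong\Z/p^{n}\Z$, ramified only above the preimage of $x$ in $U$ with inertia $\Z/p^{n}\Z$.

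Second, I would feed the cover $\pi : V\rightarrow U$ into Theorem~\ref{thm:factorthroughASW}. This yields an admissible sequence of positive integers $m_{1},\ldots,m_{n}$ (the upper ramification jumps at $x$), a tuple $(L,s,f_{1},\ldots,f_{n})\in\DIV^{[1,m_{1},\ldots,m_{n}]}(U)$, and a factorization
$$
V \longrightarrow \Psi^{-1}((L,s,f_{1},\ldots,f_{n})/U) \longrightarrow U,
$$
where $(L,s)$ encodes the divisor $[x]$ on $U$ and the sections $f_{i}$ are extracted from the Witt-vector equation cutting out $V\to U$ as in Example~\ref{ex:elemASW}. Possibly after shrinking $U$ again, all of this data is defined on a Zariski-open neighborhood of the image of $x$.

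Third, I would show that the canonical map $\mathcal{Z} = [V/G_{x}]\rightarrow \Psi^{-1}((L,s,f_{1},\ldots,f_{n})/U)$ induced by the factorization is an isomorphism. The proposition immediately preceding Theorem~\ref{thm:factorthroughASW} says that $\Psi^{-1}((L,s,f_{1},\ldots,f_{n})/U)$ is étale-locally of the form $[Y/\Z/p^{n}\Z]$ for an Artin--Schreier--Witt cover $Y\to U$, so by construction $V\rightarrow\Psi^{-1}((L,s,f_{1},\ldots,f_{n})/U)$ is a $G_{x}$-torsor realizing the same cover $\pi$. Since both $[V/G_{x}]$ and the ASW root stack are the quotient of $V$ by the same $G_{x}$-action, descent along this torsor identifies them as stacks over $U$, and I can verify the resulting morphism is an isomorphism using Lemma~\ref{lem:CFGequiv} together with Lemma~\ref{lem:functorial}.

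The main obstacle is the third step: checking that the $G_{x}$-torsor $V\to[V/G_{x}]$ really coincides, as a torsor, with the one provided by Theorem~\ref{thm:factorthroughASW}. This reduces to the fact that the Artin--Schreier--Witt equation extracted from $k(V)/k(U)$ is the same one used in the construction of $(L,s,f_{1},\ldots,f_{n})$ in Example~\ref{ex:elemASW}, together with the uniqueness of cyclic $p^{n}$-covers prescribed by a given Witt vector class from Theorem~\ref{thm:AWSextclassification}. Once matched on a normal étale chart via Remark~\ref{rem:ASWlocalpts}, the isomorphism of stacks follows, producing the desired open substack $\mathcal{Z}\subseteq\X$ of the form $\Psi^{-1}((L,s,f_{1},\ldots,f_{n})/U)$.
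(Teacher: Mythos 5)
Your proposal is correct and follows essentially the same route as the paper: pick an open $U\subseteq X$ meeting the stacky locus only in the image of $x$, realize the corresponding open substack via a one-point $\Z/p^{n}\Z$-cover $V\rightarrow U$, apply Theorem~\ref{thm:factorthroughASW} with the sections built as in Example~\ref{ex:elemASW}, and identify the substack with the resulting Artin--Schreier--Witt root stack. Your third step simply makes explicit (via the quotient presentation $[V/G_{x}]$ and torsor matching) the identification that the paper's proof states more tersely as the map $\U\rightarrow\Psi^{-1}((L,s,f_{1},\ldots,f_{n})/U)$ being independent of the chosen cover.
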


\begin{proof}
The ramification jumps of $\X$ at $x$ may be defined by pulling back to any \'{e}tale presentation $Y\rightarrow\X$ and reading off the upper jumps in the cover of curves $Y\rightarrow X$. We may take $U\subseteq X$ whose intersection with the image of the stacky locus of $\X$ is $\{x\}$. Set $\U = U\times_{X}\X$ and $V = U\times_{X}Y$. Then $V\rightarrow U$ is a one-point cover ramified at $x$, with inertia $\Z/p^{n}\Z$, so by Theorem~\ref{thm:factorthroughASW} the cover factors as $V\rightarrow \Psi^{-1}((L,s,f_{1},\ldots,f_{n})/U)\rightarrow U$, where $L = \orb_{U}(x)$ with distinguished section $s$, and $f_{1},\ldots,f_{n}$ come from an Artin--Schreier--Witt equation for the cover, as in Example~\ref{ex:elemASW}. By this description, we also get a map $\U\rightarrow\Psi^{-1}((L,s,f_{1},\ldots,f_{n})/U)$ which is independent of the cover chosen, so it gives us the desired substack. 
\end{proof}

\subsection{Cyclic-by-Tame Stacky Curves}
\label{sec:cyclicbytame}

For a stacky curve over a perfect field $k$ of characteristic $p > 0$, Proposition~\ref{prop:ramfilt}(a) implies that the automorphism group of a stacky point $x\in|\X|$ is of the form $P\rtimes\mu_{r}$, where $P$ is a finite \'{e}tale $p$-group scheme and $r$ is prime to $p$. When $P$ is cyclic and $r = 1$, Theorem~\ref{thm:wildstacky} characterizes the local geometry of $\X$ about $x$ in terms of geometric data, namely an Artin--Schreier--Witt root stack. In a similar fashion, Cadman's tame root stacks \cite{cad} characterize the local geometry about $x$ when $P$ is trivial and $r > 1$. These two parallel constructions can be combined as follows. 

\begin{thm}\label{thm:cyclicbytame}
Let $\X$ be a stacky curve over a perfect field $k$ of characteristic $p > 0$ with coarse space $X$. For any stacky point $x\in|\X|$ with cyclic-by-tame automorphism group $G \cong \Z/p^{n}\Z\rtimes\mu_{r}$, there is an open substack $\U\subseteq\X$ containing $x$ which is isomorphic to a fibre product of a wild and a tame root stack over an open subscheme of $X$. 
\end{thm}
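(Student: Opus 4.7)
The plan is to reduce to the étale-local case and then combine Theorem~\ref{thm:wildstacky} with Cadman's tame root stack theorem~\cite{cad}, exploiting the decomposition $G \cong P \rtimes \mu_{r}$ with $P = \Z/p^{n}\Z$ normal and $\mu_{r}$ tame.

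First I would shrink $X$ to an open $U$ containing $x$ as its unique stacky-image point, and set $\U := \X \times_{X} U$. By~\cite[11.3.1]{ols}, after further étale base change one may write $\U \cong [Y/G]$ for a smooth curve $Y$ with $G$-action ramified only over $x$. Let $Z := Y/P$; then $Z \to U$ is a tame $\mu_{r}$-cover, while $Y \to Z$ is a wild $P$-cover. Lemma~\ref{lem:doublequotient} then yields the iterated quotient description $\U \cong [[Y/P]/\mu_{r}]$.

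Next I would apply Theorem~\ref{thm:wildstacky} to $[Y/P]$, realizing it as an Artin--Schreier--Witt root stack $\Psi^{-1}((L,s,f_{1},\ldots,f_{n})/Z)$ for an admissible sequence $(m_{1},\ldots,m_{n})$ read off from the upper ramification jumps of $Y \to Z$, and apply Cadman's theorem to $[Z/\mu_{r}]$ to present it as a tame root stack $\sqrt[r]{(M,t)/U}$ with $M = \orb_{U}(x)$ and $t$ cutting out $x$. Using Lemma~\ref{lem:functorial}, I would rewrite the Artin--Schreier--Witt data on $Z$ as the pullback of a tuple $(L_{U},s_{U},f_{1}^{U},\ldots,f_{n}^{U})$ defined on $U$ itself; these sections come from the Artin--Schreier--Witt equation $\wp\underline{y} = \underline{w}$ presenting the full $G$-cover $Y \to U$ over a punctured neighborhood, which descends to $U$ once one accounts for the $\mu_{r}$-action on the Witt-vector coordinates inherited from the semidirect structure of $G$.

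Finally I would verify, using Lemma~\ref{lem:CFGequiv}, that $\U$ is isomorphic to the fibre product $\Psi^{-1}((L_{U},s_{U},f_{1}^{U},\ldots,f_{n}^{U})/U) \times_{U} \sqrt[r]{(M,t)/U}$ by matching $T$-points on both sides through the iterated quotient $[[Y/P]/\mu_{r}]$. The principal obstacle lies in this last identification: one must show that the fibre product encodes the semidirect product inertia $\Z/p^{n}\Z \rtimes \mu_{r}$ at $x$ correctly, rather than merely the direct product. The resolution will use the fact that the $\mu_{r}$-action on the tame root stack couples with the $\mu_{r}$-equivariance of the Witt-vector data $(L_{U},s_{U},f_{i}^{U})$ on $U$ through the semidirect structure of $G$, so the two root stack structures interact non-trivially in the fibre product rather than acting independently; verifying this compatibility is where the bulk of the work will lie.
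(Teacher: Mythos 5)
Your reduction and decomposition are the same as the paper's: shrink to $U$ with $x$ the only stacky image point, write $\U\cong[Y/G]$ \'{e}tale-locally, pass to the intermediate curve $Z=Y/P$ fixed by the wild inertia, invoke Lemma~\ref{lem:doublequotient} to get $\U\cong[[Y/P]/\mu_{r}]$, realize $[Y/P]$ as an Artin--Schreier--Witt root stack over $Z$ and $[Z/\mu_{r}]$ as a tame $r$th root stack over $U$. Up to this point you are following the paper's argument. The divergence --- and the genuine gap --- is in the final assembly. You claim $\U\cong\Psi^{-1}((L_{U},s_{U},f_{1}^{U},\ldots,f_{n}^{U})/U)\times_{U}\sqrt[r]{(M,t)/U}$, a fibre product \emph{over the scheme} $U$ of a wild root stack of $U$ and a tame root stack of $U$. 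This cannot be correct in general: the automorphism group of a point of $\X_{1}\times_{U}\X_{2}$ lying over $x$ is the direct product of the automorphism groups of the two factors, so the right-hand side has inertia $\Z/p^{n}\Z\times\mu_{r}$ at $x$, whereas $\U$ has inertia $\Z/p^{n}\Z\rtimes\mu_{r}$. You flag exactly this issue as ``where the bulk of the work will lie,'' but the proposed resolution (a ``coupling'' of the two structures inside the product over $U$) has no mechanism to act: once both factors are stacks over the scheme $U$, nothing in the fibre product can twist one inertia group by the other. Relatedly, the descent step you rely on fails at the same point: when $\mu_{r}$ acts nontrivially on the Witt-vector data $\underline{w}$ (which is precisely the case of a nontrivial semidirect product), the tuple $(L,s,f_{1},\ldots,f_{n})$ on $Z$ is not $\mu_{r}$-invariant and does not descend to a tuple on $U$; it only descends to the quotient stack $[Z/\mu_{r}]$, i.e.~to the tame root stack itself. (There is also a numerical obstruction: the jumps/orders of vanishing on $Z$ differ from those on $U$ by the tame index $r$, so the $f_{i}$ would in general have fractional vanishing order if forced down to $U$.)

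The paper's proof avoids this by building the wild structure \emph{on top of} the tame one rather than in parallel with it: using Lemma~\ref{lem:doublequotient}, the map $\beta:[V/G]\rightarrow[V_{0}/\mu_{r}]$ is exhibited as an Artin--Schreier--Witt root stack whose input data lives on the tame root stack $[V_{0}/\mu_{r}]$, giving
$$
\U \;\cong\; [\overline{\W}_{n}(\underline{m})/\W_{n}]\times_{[\overline{\W}_{n}(\underline{m})/\W_{n}]}[\A^{1}/\G_{m}]\times_{[\A^{1}/\G_{m}]}U,
$$
i.e.~an iterated (chained) fibre product, the wild root stack of the tame root stack of $U$ --- this is what the theorem's phrase ``fibre product of a wild and a tame root stack'' means. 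Because the $\mu_{r}$-inertia of the tame root stack acts on the Artin--Schreier--Witt data before the wild root is extracted, the semidirect product is encoded correctly. To repair your argument, replace the last step: instead of descending the tuple to $U$ and forming a product over $U$, keep the tuple on $[Z/\mu_{r}]$ (using the stack-based definition of the ASW root stack) and verify, via Lemma~\ref{lem:CFGequiv} and Lemma~\ref{lem:doublequotient}, that $[[Y/P]/\mu_{r}]$ is the ASW root stack of $[Z/\mu_{r}]$ along that tuple.
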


\begin{proof}
Fix an \'{e}tale presentation $Y\rightarrow\X$. As in the proof of Theorem~\ref{thm:wildstacky}, we may take $\U = U\times_{X}\X$ where $U$ is an open subscheme of the coarse space only containing the image of a single stacky point, $x$. Setting $V = U\times_{X}Y$, we get a one-point cover of curves $V\rightarrow U$ with inertia group $G$ at any preimage of $x$. Using the same iterative argument as in Example~\ref{ex:key} then realizes $\U$ as a cyclic-by-tame root stack over $U$, as summarized in the following diagram: 
\begin{center}
\begin{tikzpicture}[xscale=2.8,yscale=1.2]
  \node at (0,2) (Y) {$V$};
  \node at (0,1) (YH) {$[V/G_{1}]$};
  \node at (0,0) (YG) {$[V/G]$};
  \node at (1,1) (X) {$V_{0}$};
  \node at (1,0) (XGH) {$[V_{0}/(G/G_{1})]$};
  \node at (2,0) (P1) {$U$};
  \draw[->] (Y) -- (YH);
  \draw[->] (Y) -- (X);
  \draw[->] (YH) -- (YG);
  \draw[->] (YH) -- (X) node[above,pos=.4] {$\delta$};
  \draw[->] (X) -- (XGH);
  \draw[->] (X) -- (P1);
  \draw[->] (YG) -- (XGH) node[above,pos=.5] {$\beta$};
  \draw[->] (YG) to[out=-40,in=220] (P1);
    \node at (.9,-.8) {$\gamma = \alpha\circ\beta$};
  \draw[->] (XGH) -- (P1) node[above,pos=.4] {$\alpha$};
\end{tikzpicture}
\end{center}

Explicitly, let $V_{0} = V/G_{1}$ be the intermediate curve fixed by the wild inertia group $G_{1}\cong\Z/p^{n}\Z$. Then $[V_{0}/(G/G_{1})] \cong [V_{0}/\mu_{r}]$ is a tame $r$th root stack over $U$, e.g.~by \cite[Ex.~2.4.1]{cad}; this constructs the map $\alpha$ in the diagram above. On the other hand, the map $\delta$ is a special case of Example~\ref{ex:elemASW} with a single stacky point above $x$, so $\delta$ realizes $[V/G_{1}]$ as an Artin--Schreier--Witt root stack over $V_{0}$. Finally, by Lemma~\ref{lem:doublequotient} we have $[V/G]\cong [[V/G_{1}]/(G/G_{1})]$, so $\beta$ constructs $[V/G]$ as an Artin--Schreier--Witt root stack over $[V_{0}/\mu_{r}]$, and together these give the cyclic-by-tame structure depicted by the map $\gamma$: 
\begin{align*}
    [Y/G] &\cong [[Y/G_{1}]/(G/G_{1})] \cong [\overline{\W}_{n}(\underline{m})/\W_{n}]\times_{[\overline{\W}_{n}(\underline{m})/\W_{n}]} [V_{0}/\mu_{r}]\\
        &\cong [\overline{\W}_{n}(\underline{m})/\W_{n}]\times_{[\overline{\W}_{n}(\underline{m})/\W_{n}]} [\A^{1}/\G_{m}]\times_{[\A^{1}/\G_{m}]} U. 
\end{align*}
This completes the proof. 
\end{proof}

More generally, if $\X$ has a stacky point $x$ with automorphism group $P\rtimes\mu_{r}$ where $P$ is an elementary abelian $p$-group, the same argument shows that $\X$ is \'{e}tale-locally a fibre product of Artin--Schreier(--Witt) root stacks about $x$. This is of evident interest in characteristic $2$ in light of the automorphism group of the point at $j = 0$ on the modular curve $\X(1)$ (see Example~\ref{ex:M11char2}). 

\begin{ex}
If $\X$ is a stacky curve in characteristic $p$ with a stacky point $x$ whose automorphism group is $G\cong \Z/p\Z\times\Z/p\Z$, one can obtain this local structure by iterating two Artin--Schreier root stacks. For example, a stacky $\P^{1}$ with a single stacky point at $\infty$ with this structure can be constructed by 
$$
\X = \wp_{m_{1}}^{-1}((\orb(1),x_{0},x_{1}^{m_{1}})/\P^{1})\times_{\P^{1}}\wp_{m_{2}}^{-1}((\orb(1),x_{0},x_{1}^{m_{2}})/\P^{1})
$$
where $m_{1},m_{2}$ are the \emph{lower} jumps in the desired ramification filtration of $G$. 
\end{ex}

It is not completely clear to the author how to further classify abelian-by-tame structures geometrically, since the semidirect product structure does not appear in the root stack constructions. In any event, the techniques in this article allow one to work ``from the ground up'' to construct any such root stack structure, in the style of \cite{gs}.


\section{A Universal Stack}
\label{sec:universalstack}

The various Artin--Schreier(--Witt) root stacks of a given scheme $X$ can be packaged together into a single stack as follows. We first deal with the Artin--Schreier case. 

Note that when $m\mid m'$, there is a morphism of weighted projective stacks $\P(1,m')\rightarrow\P(1,m)$ which is $\G_{a}$-equivariant, hence descending to $[\P(1,m')/\G_{a}]\rightarrow [\P(1,m)/\G_{a}]$. Denote the inverse limit of this system by $\mathcal{AS}$, which is an ind-algebraic stack. For a scheme $X$, the fibre product $\mathcal{AS}_{X} := \mathcal{AS}\times X$ parametrizes Artin--Schreier covers $Y\rightarrow X$. 

\begin{thm}
Let $Y\rightarrow X$ be a finite separable Galois cover of curves over an algebraically closed field of characteristic $p > 0$. Then about any ramification point with inertia group $\Z/p\Z$, the cover factors through $\mathcal{AS}_{U}$ for some \'{e}tale neighborhood $U$ of the corresponding branch point on $X$. 
\end{thm}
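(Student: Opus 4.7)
The plan is to derive this from the $n=1$ case of Theorem~\ref{thm:factorthroughASW} together with the universal property of $\mathcal{AS}$ as an inverse limit of the stacks $[\P(1,m)/\G_{a}]$.

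First, I would invoke the Artin--Schreier case of Theorem~\ref{thm:factorthroughASW} (equivalently, \cite[Thm.~6.16]{kob}) at the given ramification point. This produces étale neighborhoods $U \to X$ of $x$ and $V \to Y$ of $y$, an integer $m \geq 1$ equal to the unique jump in the upper ramification filtration of the inertia $I(y\mid x) \cong \Z/p\Z$, a classifying tuple $(L,s,f) \in \DIV^{[1,m]}(U)$, and a factorization
$$
\pi|_{V} : V \longrightarrow \wp_{m}^{-1}((L,s,f)/U) \longrightarrow U,
$$
in which the root stack is the normalized pullback of $\wp_{m} : [\P(1,m)/\G_{a}] \to [\P(1,m)/\G_{a}]$ along the map $\phi_{m} : U \to [\P(1,m)/\G_{a}]$ determined by $(L,s,f)$ via Proposition~\ref{prop:ASWlinebundlesectionclassification} (in its $n=1$ incarnation, \cite[Prop.~6.4]{kob}).

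Next, I would lift $\phi_{m}$ to a morphism $\phi : U \to \mathcal{AS}$. By the universal property of the inverse limit, such a lift amounts to a compatible family of classifying maps $\phi_{m'} : U \to [\P(1,m')/\G_{a}]$ indexed by multiples $m'$ of $m$ and compatible under the transition morphisms $[\P(1,m')/\G_{a}] \to [\P(1,m)/\G_{a}]$. For each $m'$, the tuple $(L,s,f)$ determines $\phi_{m'}$ after extracting an $(m'/m)$-th root of the pair $(L,s)$; such roots exist after a further étale shrinking of $U$, and the resulting $\phi_{m'}$ are compatible with the transition maps by construction. Pulling back the limit isogeny $\wp$ on $\mathcal{AS}$ along $\phi$ then recovers the cover $V \to U$ as a normalized pullback, which is precisely the desired factorization through $\mathcal{AS}_{U} = \mathcal{AS} \times U$.

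The hard part is the compatibility check in the second step: one must verify that the $\G_{a}$-equivariant transition morphisms $[\P(1,m')/\G_{a}] \to [\P(1,m)/\G_{a}]$ act on classifying tuples in a way that the family $\{\phi_{m'}\}$ can be chosen simultaneously, i.e.~that the root extractions on $U$ can be made coherently across all $m'$. The cleanest way I see to handle this is to perform the lift after passing to the Artin--Schreier root stack $\wp_{m}^{-1}((L,s,f)/U)$, where the classifying data acquire canonical roots from the stacky structure; this reduces the compatibility among the $\phi_{m'}$ to a formal property of the inverse system and bypasses any ad hoc choices on $U$ itself.
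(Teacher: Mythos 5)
Your first paragraph already reproduces the paper's entire proof: the paper proves this theorem by simply applying the $n=1$ factorization result (\cite[Thm.~6.16]{kob}, i.e.\ the $n=1$ case of Theorem~\ref{thm:factorthroughASW}), which gives $V\rightarrow\wp_{m}^{-1}((L,s,f)/U)\rightarrow U$. No passage to a compatible system over all $m'$ is ever carried out: the intended point is that the level-$m$ datum (the classifying map $U\rightarrow[\P(1,m)/\G_{a}]$, equivalently the Artin--Schreier root stack at the jump $m$) maps canonically into $\mathcal{AS}_{U}$ -- note the paper calls $\mathcal{AS}$ an ind-algebraic stack -- so the factorization through $\mathcal{AS}_{U}$ is taken to be immediate from the single stage $m$.

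The extra lifting argument you add is where a genuine error appears. Granting your own identification of what compatibility requires, namely an $(m'/m)$-th root of the pair $(L,s)$, the claim that ``such roots exist after a further \'{e}tale shrinking of $U$'' is false. Here $L=\orb_{U}(x)$ and $s$ is its canonical section, vanishing to order one at the branch point $x$, which must remain in $U$; since \'{e}tale morphisms preserve orders of vanishing, there is no section $s'$ on any \'{e}tale neighborhood of $x$ with $(s')^{d}=s$ for $d\geq 2$ -- removing exactly this obstruction is what (tame) root stacks are for, so the roots you need live on stacks over $U$, not on \'{e}tale covers of $U$. Your proposed repair, performing the lift on $\wp_{m}^{-1}((L,s,f)/U)$, does not work as stated either: the stacky structure there furnishes only a $p$-th root of $(L,s)$ (the automorphism group at the stacky point is $\Z/p\Z$), not roots of arbitrary order $m'/m$, and it also silently replaces the source $U$ of the classifying map by the root stack without an argument that this yields the asserted factorization. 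So the compatible-family construction in your second and third paragraphs is both unnecessary relative to the paper's argument and, as written, breaks down at the branch point.
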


\begin{proof}
Apply \cite[Thm.~6.16]{kob}. 
\end{proof}

\begin{ex}
\label{ex:TY}
When $X = \Spec k((t))$ for a perfect field $k$ of characteristic $p > 0$, the stack $\mathcal{AS}_{X}$ coincides with the stack $\Delta_{\Z/p\Z}$ of formal $\Z/p\Z$-torsors studied in \cite{ty}. The quotients $[\P(1,m)/\G_{a}]$ can be viewed as a filtration of $\Delta_{\Z/p\Z}$ by ramification jump, coinciding with $(\A^{(S)})^{\infty}$ in the isomorphism $(\A^{(S)})^{\infty}\times B(\Z/p\Z)\cong\Delta_{\Z/p\Z}$ from [{\it loc.~cit.}, Thm.~4.13]. 
\end{ex}

More generally, for a fixed $n\geq 2$, the compactified Witt stacks $\overline{\W}_{n}(1,m_{1},\ldots,m_{n})$ form an inverse system via $m_{i}\mid m_{i}'$ for all $i$. Denote their inverse limit by $\mathcal{ASW}_{n}$, which is again an ind-algebraic stack. Let $\mathcal{ASW}_{n,X} := \mathcal{ASW}_{n}\times X$ be the stack which parametrizes ASW-covers of $X$. 

\begin{thm}
\label{thm:univASW}
Let $Y\rightarrow X$ be a finite separable Galois cover of curves over an algebraically closed field of characteristic $p > 0$. Then about any ramification point with inertia group $\Z/p^{n}\Z$, the cover factors through $\mathcal{ASW}_{n,U}$ for some \'{e}tale neighborhood $U$ of the corresponding branch point on $X$. 
\end{thm}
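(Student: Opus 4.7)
The strategy is to reduce directly to Theorem~\ref{thm:factorthroughASW}, which provides the substantive geometric factorization through a specific Artin--Schreier--Witt root stack; the passage to the universal stack $\mathcal{ASW}_n$ is then essentially formal, in the same spirit as the preceding Artin--Schreier analogue.

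First, apply Theorem~\ref{thm:factorthroughASW} to the given ramification point $y\mapsto x$ with inertia $\Z/p^n\Z$. This produces \'etale neighborhoods $V\to Y$ of $y$ and $U\to X$ of $x$, an admissible sequence of upper ramification jumps $(m_1,\ldots,m_n)$, and a tuple $(L,s,f_1,\ldots,f_n)\in\DIV^{[1,m_1,\ldots,m_n]}(U)$ together with a factorization
\[
V \longrightarrow \Psi^{-1}((L,s,f_1,\ldots,f_n)/U) \longrightarrow U.
\]

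Second, by the very definition of an Artin--Schreier--Witt root stack as a normalized pullback, there is a tautological morphism $\Psi^{-1}((L,s,f_1,\ldots,f_n)/U) \to [\overline{\W}_n(1,m_1,\ldots,m_n)/\W_n]$. Composing this with the factorization from the previous step and pairing with the structure map $V\to U$ gives a morphism
\[
V \longrightarrow [\overline{\W}_n(1,m_1,\ldots,m_n)/\W_n] \times U.
\]

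Third, to promote this to a map into $\mathcal{ASW}_{n,U}=\mathcal{ASW}_n\times U$, I use that $\mathcal{ASW}_n$ is the limit of the system $\{[\overline{\W}_n(1,m_1,\ldots,m_n)/\W_n]\}$ under the divisibility ordering of weight tuples. Composing the specific map above with the transition morphisms $[\overline{\W}_n(1,m_1,\ldots,m_n)/\W_n]\to[\overline{\W}_n(1,m_1'',\ldots,m_n'')/\W_n]$ (for $m_i''\mid m_i$) produces a compatible family of maps from $V$; the functoriality of the root stack construction recorded in Lemma~\ref{lem:functorial} guarantees that these are coherent across different weight tuples. This family assembles into a morphism $V\to\mathcal{ASW}_n$, and pairing with $V\to U$ yields the desired factorization $V\to\mathcal{ASW}_{n,U}\to U$.

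The main obstacle is really just bookkeeping: Theorem~\ref{thm:factorthroughASW} does all the geometric work, identifying the precise weight tuple governed by the upper ramification filtration. The only care needed is in verifying compatibility with the transition morphisms of the limit system defining $\mathcal{ASW}_n$, which reduces to the functoriality of the root stack construction. Accordingly, the proof can be presented as a one-line invocation of Theorem~\ref{thm:factorthroughASW} followed by the universal property of the limit, paralleling the proof of the analogous Artin--Schreier statement given immediately before.
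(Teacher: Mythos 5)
Your proposal is correct and takes essentially the same route as the paper, whose entire proof is the one-line invocation ``Apply Theorem~\ref{thm:factorthroughASW}.'' Your additional bookkeeping about assembling the map into the limit system defining $\mathcal{ASW}_{n}$ is exactly the formal step the paper leaves implicit, mirroring its treatment of the Artin--Schreier case.
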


\begin{proof}
Apply Theorem~\ref{thm:factorthroughASW}. 
\end{proof}

\begin{ex}
As in Example~\ref{ex:TY}, $\mathcal{ASW}_{n,\Spec k((t))} \cong \Delta_{\Z/p^{n}\Z}$, the stack of formal $\Z/p^{n}\Z$-torsors also studied in \cite{ty}. In this case, the authors in {\it loc.~cit.}~do not give an explicit parametrization as in the $\Z/p\Z$ case, but they do present $\Delta_{\Z/p^{n}\Z}$ by a system of affine schemes. 
\end{ex}


\section{Application: Canonical Rings}
\label{sec:canring}

Recall from Theorem~\ref{thm:RH} that for a stacky curve $\X$ over a field $k$ with coarse moduli space $\pi : \X\rightarrow X$, the following formula defines a canonical divisor $K_{\X}$ on $\X$: 
$$
K_{\X} = \pi^{*}K_{X} + \sum_{x\in\X(k)}\sum_{i = 0}^{\infty} (|G_{x,i}| - 1)x
$$
where $G_{x,i}$ are the higher ramification groups in the lower numbering at $x$. 

\begin{ex}
\label{ex:ASWcanonicaldiv}
Let $Y\rightarrow\P^{1}$ be the Artin--Schreier--Witt cover given by the equations 
$$
y^{p} - y = \frac{1}{x^{m}} \quad\text{and}\quad z^{p} - z = \frac{y}{x^{m}}. 
$$
This cover is ramified at the point $Q$ lying over $\infty$ with group $G = \Z/p^{2}\Z$ and ramification jumps $m$ and $m(p^{2} + 1)$ by Example~\ref{ex:key}, so by the stacky Riemann--Hurwitz formula, the quotient stack $\X = [Y/G]$ has canonical divisor 
\begin{align*}
  K_{\X} &= -2Q + \sum_{i = 0}^{m} (p^{2} - 1)Q + \sum_{i = m + 1}^{m(p^{2} + 1)} (p - 1)Q\\
    &= -2Q + ((m + 1)(p^{2} - 1) + mp^{2}(p - 1))Q\\
    &= (mp^{3} + p^{2} - m - 3)Q. 
\end{align*}
Using the formula $\deg(K_{\X}) = 2g(\X) - 2$, we can also compute the genus of $\X$: 
$$
g(\X) = \frac{mp^{3} + p^{2} - m - 1}{2p^{2}}. 
$$
\end{ex}

Using an appropriate form of Riemann--Roch (see \cite[Cor.~1.189]{beh} or \cite[Rmk.~5.5.12]{vzb} or \cite[Sec.~7]{kob} for further discussion), one can recover the dimensions of the graded pieces of the canonical ring of $\X$: 
$$
h^{0}(\X,nK_{\X}) = \deg\left (\left\lfloor nK_{\X}\right\rfloor\right ) - g(X) + 1 + h^{0}(\X,(1 - n)K_{\X}). 
$$
See \cite[Ex.~7.8]{kob} for an example when $\X$ is an Artin--Schreier root stack over $\P^{1}$. This can then be used to construct a presentation of the canonical ring of $\X$, e.g.~by the main theorems in~\cite{od}. 

\begin{ex}
Let $\X = [Y/(\Z/p^{2}\Z)]$ be the Artin--Schreier--Witt quotient from Example~\ref{ex:ASWcanonicaldiv}. For the cases when $m < p^{2}$, we have 
$$
\lfloor K_{\X}\rfloor = -2H + \left\lfloor\frac{mp^{3} + p^{2} - m - 1}{p^{2}}\right\rfloor\infty = -2H + mp\infty
$$
so by Riemann--Roch, $h^{0}(\X,K_{\X}) = mp$. There's not such a clean formula for the global sections of $nK_{\X}$, but one still has
$$
h^{0}(\X,nK_{\X}) = -2n + \left\lfloor\frac{n(mp^{3} + p^{2} - m - 1)}{p^{2}}\right\rfloor + 1 = n(mp - 1) + \left\lfloor\frac{-n(m + 1)}{p^{2}}\right\rfloor. 
$$
When $m\geq p^{2}$, the formulas are even more complicated, reflecting the importance of the ramification jumps in the geometry of these wild stacky curves. 
\end{ex}

\begin{ex}
Let $\M_{1,1}$ be the moduli stack of elliptic curves over a field $F$ and let $\overline{\M}_{1,1}$ be its standard compactification obtained by adding nodal curves. When $\char F\not = 2,3$, $\overline{\M}_{1,1}$ is isomorphic to a stacky $\P^{1}$, namely the weighted projective stack $\P(4,6)$. While this is not a stacky curve, one can {\it rigidify} $\overline{\M}_{1,1}$ to remove the generic $\mu_{2}$ action and obtain a stacky curve $\overline{\M}_{1,1}^{\rig} \cong \P(2,3)$ (see \cite[Rmk.~5.6.8]{vzb}). This only changes the canonical ring by shifting the grading: a section in the weight $k$ piece of $R(\overline{\M}_{1,1}^{\rig})$ corresponds to a section in the weight $2k$ piece of $R(\overline{\M}_{1,1})$. The same is true if we instead consider the {\it log canonical ring} $R(\overline{\M}_{1,1},\Delta)$, where $\Delta$ is the log divisor of cusps (in this case, $\Delta$ is the single point added to compactify $\M_{1,1}$). By \cite[Lem.~6.2.3]{vzb}, 
$$
R(\overline{\M}_{1,1},\Delta) \cong \bigoplus_{k = 0}^{\infty} M_{k}
$$
where $M_{k}$ is the space of weight $k$ (Katz) modular forms. On the other hand, the isomorphism $\overline{\M}_{1,1}^{\rig}\cong\P(2,3)$ and Theorem~\ref{thm:RH} imply that $K = -2\infty + 2P + Q$ is a canonical divisor on $\overline{\M}_{1,1}^{\rig}$, where $P$ is the elliptic curve with $j = 0$ and $Q$ is the one with $j = 1728$. Then Riemann--Roch says that 
$$
R(\overline{\M}_{1,1}^{\rig},\Delta) \cong F[x_{2},x_{3}]
$$
where $x_{i}$ is a generator in weight $i$. Applying the grading shift, we get 
$$
R(\overline{\M}_{1,1},\Delta) \cong F[x_{4},x_{6}]
$$
which recovers a classical result for modular forms in all characteristics other than $2$ and $3$. 
\end{ex}

\begin{ex}
\label{ex:M11char3}
In characteristic $3$, the points on $\overline{\M}_{1,1}$ corresponding to elliptic curves with $j$-invariants $0$ and $1728$ collide, resulting in a more exotic stacky structure. Indeed, one can show that $\overline{\M}_{1,1}^{\rig}$ is isomorphic to a stacky curve with coarse space $\P^{1}$ and a single stacky point with automorphism group $S_{3}$, which is nonabelian. Such a stacky curve is of course not a tame or wild root stack, but by Theorem~\ref{thm:cyclicbytame}, one can take the fibre product of a tame square root stack and an Artin--Schreier root stack of order $3$, both over $\infty\in\P^{1}$, to obtain this curve. 
\end{ex}

\begin{ex}
\label{ex:M11char2}
In characteristic $2$, things are even worse. Once again, the points with $j = 0$ and $1728$ collide and this time $\overline{\M}_{1,1}^{\rig}$ is isomorphic to a stacky $\P^{1}$ with a single stacky point whose automorphism group is the semidirect product $(\Z/2\Z\times\Z/2\Z)\rtimes\Z/3\Z$. As the $2$-part of this group is not cyclic, one must iterate Artin--Schreier root stacks to achieve the wild part of the structure. 
\end{ex}

To use the stacky Riemann--Hurwitz formula (Theorem~\ref{thm:RH}) in both of these cases, one needs to compute the ramification filtration for the automorphism group at $j = 0 = 1728$ and read off the ramification jumps. In forthcoming joint work with David Zureick-Brown, we compute these ramification jumps and recover the result, originally due to Deligne \cite{del}, that in characteristics $p = 2,3$, the ring of mod $p$ modular forms (of level $1$) is isomorphic to the graded ring $\F_{p}[x_{1},x_{6}]$, where $x_{i}$ is a generator in degree $i$. We extend this computation to the family $\X_{0}(N)$ in characteristic $p = 2,3$, with $p\nmid N$, obtaining an algorithm to compute their log canonical rings. 

We will also give an account of the following example. 

\begin{ex}
Another example coming from modular curves is, for a prime $p > 5$, the quotient $\X = [X(p)/PSL_{2}(\F_{p})]$. As pointed out in \cite[Rmk.~5.3.11]{vzb}, in characteristic $3$, $\X$ is a stacky $\P^{1}$ with two stacky points $P$ and $Q$ whose automorphism groups are $\Z/p\Z$ and $S_{3}$, respectively (assuming $p > 3$). Therefore a canonical divisor on $\X$ is 
$$
K_{\X} = -2H + (p - 1)P + (5 + 2m)Q
$$
where $H\not\in\{P,Q\}$ and $m$ is the jump in the ramification filtration of $S_{3}$ at $Q$. Calculations show that $m = 1$ and the canonical ring of $\X$ is generated by monomials of the form $s^{a}t^{b}$, where $a$ and $b$ satisfy $\frac{(p + 1)b}{p}\leq a\leq \frac{7b}{6}$; see \cite{od}. In particular, the canonical ring has $\left\lfloor\frac{p}{6}\right\rfloor$ generators in degree $p$. For example, when $p = 7$ or $11$, the canonical ring has $1$ generator in degree $p$ and none in lower degrees. 
\end{ex}


\section{Future Directions}
\label{sec:future}

It would be desirable to have a geometric description (i.e.~in terms of intrinsic data such as line bundles and sections) of the local structure of stacky curves with arbitrary automorphism groups. As pointed out in Section~\ref{sec:introstackscharp}, these are all of the form $P\rtimes\mu_{r}$ for some \'{e}tale $p$-group scheme $P$ and some $r$ prime to $p$. Of course, Lemma~\ref{lem:functorial} and its tame analogue \cite[Rmk.~2.2.3]{cad} allow one to iterate tame and wild cyclic root stacks to obtain any local desired structure. In theory this can be used to describe such a structure in terms of line bundles and sections, but it is unwieldy. 

\begin{question}
Can one extend Theorem~\ref{thm:cyclicbytame} to arbitrary automorphism groups? 
\end{question}

\begin{question}
For example, can one give an intrinsic description (in terms of line bundles, sections, etc.) of a stacky $\P^{1}$ in characteristic $2$ with an automorphism group $Q_{8}$? 
\end{question}

From our perspective, the main obstacle to an intrinsic description of general stacky structures is the lack of a nonabelian generalization of Garuti's compactification $\overline{\W}_{n}$. A possible approach may be found in the Inaba classification of $G$-extensions, where $G$ is a $p$-group in characteristic $p$, due to Bell \cite{bel} in its most general form. 

\begin{thm}[{\cite[Thm.~1.5]{bel}}]
Let $G$ be a finite $p$-group, possibly nonabelian, and fix an embedding $G\hookrightarrow U_{n}(\F_{p})$ into the unitary group $U_{n}(\F_{p})$. For a ring $R$ of characteristic $p$ with connected spectrum $X = \Spec R$, the Galois $G$-covers of $X$ are classified up to isomorphism by the quotient $U_{n}(R)/LU_{n}(R)$, where $L(M) = M^{(p)}M^{-1}$ for any matrix $M\in U_{n}(R)$, and where $M^{(p)}$ is the matrix whose entries are the $p$th powers of the entries of $M$. 
\end{thm}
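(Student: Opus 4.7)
The statement is a non-abelian generalization of Artin--Schreier theory, and the natural approach is via the Lang--Steinberg map for the unipotent group scheme $U_n$. I would organize the argument in three steps.

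First, I would show that the morphism $L : U_n \to U_n$, $M \mapsto M^{(p)} M^{-1}$, is a finite \'{e}tale morphism of $\F_p$-group schemes whose scheme-theoretic kernel is the constant group scheme $U_n(\F_p)$. This gives a short exact sequence of \'{e}tale sheaves on $X = \Spec R$,
$$
1 \longrightarrow U_n(\F_p) \longrightarrow U_n \xrightarrow{\;L\;} U_n \longrightarrow 1.
$$
Surjectivity on the \'{e}tale site is the key point and should follow by d\'{e}vissage along the upper-central filtration of $U_n$, whose successive quotients are isomorphic to $\G_a$. At each stage the equation $L(N) = M$ reduces to an Artin--Schreier equation $x^p - x = a$, which is always solvable \'{e}tale-locally over $R$.

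Second, I would use the resulting (non-abelian) cohomology sequence
$$
U_n(R) \xrightarrow{\;L\;} U_n(R) \longrightarrow H^1_{\et}(X, U_n(\F_p)) \longrightarrow H^1_{\et}(X, U_n)
$$
and argue that the last term vanishes. Running d\'{e}vissage a second time reduces the vanishing of $H^1_{\et}(X, U_n)$ to the vanishing of $H^1_{\et}(X, \G_a)$, which holds for affine $X$ since $\G_a$-torsors correspond to quasi-coherent $\orb_X$-extensions. Exactness then yields a bijection of pointed sets
$$
U_n(R)/L(U_n(R)) \xrightarrow{\;\sim\;} H^1_{\et}(X, U_n(\F_p)),
$$
where the right side classifies \'{e}tale $U_n(\F_p)$-torsors on $X$. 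Explicitly, a class $[M]$ produces the torsor $\Spec R[N]/(L(N) - M)$, with $U_n(\F_p)$ acting by left multiplication on $N$.

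Third, given the embedding $G \hookrightarrow U_n(\F_p)$, I would restrict structure group: the induced map $H^1_{\et}(X, G) \to H^1_{\et}(X, U_n(\F_p))$ identifies $G$-torsors with those $U_n(\F_p)$-torsors whose geometric components carry a $G$-structure. Because $\Spec R$ is connected, every such torsor decomposes as a disjoint union of copies of a single connected component, which is a Galois $G$-cover. Conversely, every $G$-cover induces, via the embedding, a $U_n(\F_p)$-torsor on $X$ classified by a well-defined class in $U_n(R)/L(U_n(R))$. Chasing isomorphisms through this construction shows that two matrices $M, M'$ give isomorphic $G$-covers precisely when $M' = L(N)\, M$ for some $N \in U_n(R)$, finishing the bijection.

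The main obstacle I anticipate is Step~1: establishing that $L$ is \'{e}tale and \'{e}tale-locally surjective over an arbitrary characteristic-$p$ ring $R$, not just over a perfect field. The classical Lang--Steinberg argument uses the surjectivity of Frobenius on $\overline{k}$, which fails over a general $R$, so the d\'{e}vissage along the $\G_a$-filtration must be executed carefully to ensure that the Artin--Schreier trivializations at each stage patch compatibly with the non-abelian multiplication in $U_n$, rather than merely set-theoretically. The compatibility between Step~3 and the hypothesis that $\Spec R$ is connected is a secondary subtlety, needed so that a $U_n(\F_p)$-torsor unambiguously recovers a $G$-cover rather than a disjoint union of incompatible $G$-structures.
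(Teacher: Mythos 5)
You should first note that the paper contains no proof of this statement at all: it is quoted verbatim from Bell (and ultimately goes back to Inaba), so there is no internal argument to compare yours against; what follows judges your proposal on its own terms. The central framework of Steps 1--2 does not work as written. For $n\geq 3$ the group $U_{n}$ is nonabelian and $L(MN)=M^{(p)}N^{(p)}N^{-1}M^{-1}\neq L(M)L(N)$, so $L$ is \emph{not} a homomorphism of group sheaves; consequently there is no short exact sequence $1\to U_{n}(\F_{p})\to U_{n}\xrightarrow{L}U_{n}\to 1$ of \'{e}tale sheaves and no associated nonabelian cohomology sequence of the kind you invoke. The correct Lang--Steinberg-style statement is that $L$ is a finite \'{e}tale surjection whose fibres are the right cosets $A\cdot U_{n}(\F_{p})$, i.e.\ $L$ exhibits $U_{n}$ as a $U_{n}(\F_{p})$-torsor over itself; one then argues directly rather than through an exact sequence: given a $U_{n}(\F_{p})$-torsor $P$ on $X$, the contracted product $P\times^{U_{n}(\F_{p})}U_{n}$ is a torsor under the sheaf $U_{n}$, which is trivial because $H^{1}_{\et}(X,U_{n})$ vanishes by d\'{e}vissage along a central filtration with vector-group quotients together with $H^{1}(X,\orb_{X})=0$ for affine $X$ (this is where your d\'{e}vissage belongs), and a global section produces $B=A^{(p)}A^{-1}\in U_{n}(R)$. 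Note also that the obstacle you flag is misplaced: surjectivity of $L$ is a geometric statement checked over $\bar{\F}_{p}$ exactly as in Lang's theorem, and one never needs surjectivity on $R$-points -- only \'{e}tale-local solvability, which follows from $L$ being finite \'{e}tale and surjective.

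Even granting the torsor classification, Step 3 identifies the wrong equivalence relation and elides the genuinely delicate point. If $A$ solves $A^{(p)}=MA$ and $N\in U_{n}(R)$, then $NA$ solves the corresponding equation for $N^{(p)}MN^{-1}$; so isomorphism classes of $U_{n}(\F_{p})$-torsors correspond to orbits of the Frobenius-twisted conjugation $M\mapsto N^{(p)}MN^{-1}$, not to cosets $M\mapsto L(N)M$ as you assert. The two agree only when $U_{n}$ is abelian; your reading is an artifact of the loose notation $U_{n}(R)/LU_{n}(R)$ in the quoted statement, but in a proof of the nonabelian case it is an error. Finally, the passage from $U_{n}(\F_{p})$-torsors to $G$-covers is not the formality you make it: the map $H^{1}_{\et}(X,G)\to H^{1}_{\et}(X,U_{n}(\F_{p}))$ induced by $G\hookrightarrow U_{n}(\F_{p})$ need not be injective (two homomorphisms $\pi_{1}^{\et}(X)\to G$ can become conjugate in $U_{n}(\F_{p})$ without being conjugate in $G$), so a class in $U_{n}(R)$ modulo twisted conjugation does not by itself determine a $G$-cover, nor is ``admits a $G$-structure'' a condition one can check and then forget. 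Making precise what ``classified'' means here -- using connectedness of $\Spec R$, base points, and the choice of reduction data -- is exactly the content that Inaba's and Bell's arguments supply, and ``chasing isomorphisms through this construction'' does not dispose of it.
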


\begin{question}
Is there a natural compactification of the unitary group $U_{n}$, which contains Garuti's $\overline{\W}_{n}$ as a subvariety, such that the map $L : U_{n}\rightarrow U_{n}$ extends to the compactification? Is there a stacky compactification of $U_{n}$ generalizing the stacks $\overline{\W}_{n}(1,m_{1},\ldots,m_{n})$? 
\end{question}



\end{document}